\newtheorem{thm}{Theorem}[section]
\numberwithin{equation}{section}
\numberwithin{table}{section}
\numberwithin{figure}{section}
\newtheorem{lem}[thm]{Lemma}
\newtheorem{prop}[thm]{Proposition}
\newtheorem{defn}[thm]{Definition}
\newtheorem{rmk}[thm]{Remark}
\newtheorem{crl}[thm]{Corollary}
\DeclareMathOperator{\Span}{span}
\title{Convergence of Finite Element Methods for Singular Stochastic Control}
\author{Martin G. Vieten\thanks{Department of Mathematical Sciences, University of Wisconsin-Milwaukee (\email{mgvieten@uwm.edu}).}
\and Richard H. Stockbridge \thanks{Department of Mathematical Sciences, University of Wisconsin-Milwaukee (\email{stockbri@uwm.edu}).
This research was supported in part by the Simons Foundation under grant award 246271.}}
\begin{document}
\begin{center}
\LARGE \textbf{Convergence of Finite Element Methods for\\ Singular Stochastic Control\\[0.3cm]}
\normalsize Martin G. Vieten (University of Wisconsin-Milwaukee, mgvieten@uwm.edu)\\
Richard H. Stockbridge (University of Wisconsin-Milwaukee)
\end{center}

\noindent \textbf{Abstract:} A numerical method is proposed for a class of stochastic control problems including singular behavior. This method solves an 
infinite-dimensional linear program equivalent to the stochastic control problem using a finite element type approximation, which results
in a solvable finite-dimensional program. The discretization scheme as well as the necessary assumptions are discussed, and a detailed 
convergence analysis for the discretization scheme is given. Its performance is illustrated by two examples featuring a long-term average cost criterion.\\[0.1cm]
\textbf{Keywords:} singular stochastic control, finite element method, linear programming, relaxed controls\\[0.1cm]
\textbf{AMS classification:} 93E20, 93E25

\normalsize

\section{Introduction}
\subsection{Motivation and Literature}
This paper considers singular stochastic control problems for a process $X$ whose dynamics are initially specified by a stochastic differential
equation (SDE)
\begin{equation}
\label{introduction:sde}
 dX_t = b(X_t, u_t)dt + \sigma(X_t, u_t)dW_t +d\xi_t,\quad X_0 = x_0,
\end{equation}
where $W$ is a Brownian motion process and $\xi$ is another stochastic process that evolves singularly in time. The process $u$ represents the
control influencing the evolution of $X$. Given 
two cost functions $\tilde{c}_0$ and $\tilde{c}_1$, $u$ has to be chosen from a set of admissible controls in such a way that it minimizes
either a long-term average cost criterion
\begin{equation}
\label{introduction:lta-criterion}
\limsup_{t\rightarrow \infty} \frac{1}{t}\mathbb{E}\left[\int_0^t \tilde{c}_0(X_s,u_s)\,ds
 + \int_0^t \tilde{c}_1(X_s,u_s)\,d\xi_s \right]
 \end{equation}
 or a discounted infinite horizon cost criterion
 \begin{equation}
 \label{introduction:ihd-criterion}
 \mathbb{E}\left[\int_0^\infty e^{-\alpha s} \tilde{c}_0(X_s,u_s)\,ds
 + \int_0^\infty e^{-\alpha s} \tilde{c}_1(X_s,u_s)\,d\xi_s \right],
 \end{equation}
 for some discounting rate $\alpha>0$. Such control problems are considered in a relaxed sense by using a martingale problem formulation involving the infinitesimal generators of $X$,
 and an equivalent infinite-dimensional linear program for the expected occupation measures of both the process $X$ and the control $u$. 
 Approximate solutions to this linear program are attained by discretizing the infinite-dimensional constraint space of functions using a 
 finite element approach, and discrete approximations of the expected occupation measures. The $\epsilon$-optimality of approximate
 solutions is shown and the method is applied to two example problems.\\
  The classical analytic approach to stochastic control problems is given by methods based on the dynamic programming principle, as presented
 in Fleming and Rishel \cite{fleming-opt-control} or Fleming and Soner \cite{fleming-viscosity}. Central to these methods is the solution
 of the so-called Hamilton-Jacobi-Bellman (HJB) equation. Numerical methods can be derived by solving
 a control problem for an approximate, discrete Markov chain, as extensively discussed in Kushner and Dupuis \cite{kushner-depuis}, or
 by using discrete methods to approximate the solution to the HJB equation, frequently considering viscosity solutions. 
 An example is given in Kumar and
 Muthuraman \cite{kumar-kumar}. Another numerical
technique using dynamic programming was analyzed in Anselmi et. al. \cite{dufour-continuous-time}.\\
 As an alternative, linear programming approaches have been instrumental in the analytic treatment of various stochastic control problems.
 The first example is given in Manne \cite{manne-lp}, where an ergodic Markov chain for an inventory problem under long-term average costs is
 analyzed. Bhatt and Borkar \cite{bhatt-borkar} as well as Kurtz and Stockbridge \cite{kurtz-stockbridge-existence} investigated the linear 
 programming approach for solutions of controlled martingale problems using long-term average and discounted cost criteria for infinite horizon problems, as well as
 finite horizon and first exit problems for absolutely continuous control. 
 Taksar \cite{taksar-linear-programming} establishes equivalence between a linear program and a stochastic control problem for a
 multi-dimensional diffusion with singular control. Jump diffusions of Levy-Ito type are considered by Serrano \cite{serrano-levy-ito}.\\
 To provide an alternative to numerical techniques based on the dynamic programming principle, the linear programming approach has been
 exploited using various discretization techniques. A very general setting can be found in Mendiondo and Stockbridge \cite{stockbridge-mendiondo}. Moment-based approaches have
been used in a line of publications, as can be seen in Helmes et. al. \cite{helmes-roehl-stockbridge-moments} and 
Lasserre and Prieto-Rumeau \cite{lasserre-moments}.
Recent research by Kaczmarek et. al. \cite{kaczmarek} and Rus \cite{rus-thesis} has been investigating a novel approximation technique for
the linear programming formulation by borrowing ideas from the finite element method used for solving partial differential equations.
A discretization of the occupation measures (by discretizing their densities) and the linear constraints with a finite set of basis functions
gives a solvable finite-dimensional linear program.
Kaczmarek et. al. \cite{kaczmarek} indicated that a finite element discretization approach may outperform Markov chain approximation
methods as well as a finite difference approximation to the Hamilton-Jacobi-Bellman equation stemming from the dynamic programming approach.
However, no analytic treatment of the convergence properties was provided.\\
The present paper closes this gap by providing a modified finite element based approximation scheme for which convergence of
the computed cost criterion values can be guaranteed. To this end, the approximation scheme is split up in several steps which either
deal with the discretization of the measures or the constraints. The separate steps are set up in such a way that convergence of the 
discrete optimal solutions to the analytic optimal solution can be proven. The proofs are, on one hand, based on the concept of weak
convergence of measures, and on a detailed analysis of discretized approximations of the measures on the other hand.\\
The paper is structured as follows. The next subsection presents the notation 
and formally introduces the linear programming formulation for singular stochastic control problems, along with a review of important results
from the literature. The approximation scheme is discussed in Section \ref{approximation-section}. Then, we provide the convergence proof for
this scheme in Section \ref{convergence} and illustrate the performance of the numerical method on two examples in Section \ref{example-section}. A short outlook
on possible research directions concludes this paper. Additional proofs needed to prove the results 
from Section \ref{convergence} are given in Appendix \ref{appendix_a}.
 
\subsection{Notation and Formalism}
\label{introduction:notation-and-formalism}
The natural numbers are denoted by $\mathbb{N}$, and the non-negative integers are $\mathbb{N}_0 = \mathbb{N}\cup \{0\}$. The symbol used for the
real numbers is $\mathbb{R}$, and that for the non-negative real numbers is $\mathbb{R}_+$. The space of $n$-dimensional vectors is
$\mathbb{R}^n$, and the space of $n$ by $m$ matrices is
$\mathbb{R}^{n\times m}$.\\
The set of continuous functions on a topological space $S$ is denoted by $C(S)$. The set of twice differentiable functions on $S$ is denoted by $C^2(S)$,
while its subset of twice differentiable functions with compact support is referred to by $C^2_c(S)$. 
The space of uniformly continuous, bounded functions is denoted by $C^u_b(S)$.
On a function space,
$\|\cdot\|_\infty$ refers to the uniform norm of functions. On $\mathbb{R}^n$, $\|\cdot\|_\infty$ refers to the maximum norm of vector components,
while on $\mathbb{R}^{n\times m}$, it refers to the maximum absolute row sum norm. The space of Lebesgue integrable functions
is $L^1(S)$. For any given function $f$, let $f^+:E\ni x \mapsto f^+(x) := \max(f(x),0)$ be the 
positive part of a function $f$. \\
In terms of measurable spaces, we use $\mathscr{B}(S)$ to describe the $\sigma$-algebra of Borel sets on a topological space $S$.
Given a measurable space $(\Omega, \mathscr{F})$, the set of probability measures on $\Omega$ is $\mathcal{P}(\Omega)$, while the set of finite Borel measures
is denoted by $\mathcal{M}(\Omega)$. The symbol $\delta_{\{s\}}$ denotes the Dirac measure on $s\in S$. When using the differential $dx$
as an integrator, it is
understood that this refers to integration by Lebesgue measure. When we explicitly refer to the Lebesgue measure, we use the symbol $\lambda$.
A Brownian motion process is denoted by the symbol $W$.\\

Consider the SDE given by (\ref{introduction:sde}). We assume that $X_t \in E=[e_l, e_r]$, with $\infty<e_l<e_r<\infty$,  and 
$u_t\in U=[u_l, u_r]$, with $\infty<u_l<u_r<\infty$, for all $t\geq 0$.
$E$ and $U$ are called the state space and control space, respectively. The coefficient functions $b:E\times U\mapsto \mathbb{R}$ and
$\sigma: E\times U \mapsto \mathbb{R}_+-\{0\}$ are called the drift and diffusion functions. They are assumed to be continuous. 
The process $\xi$ is a singular stochastic process stemming from the behavior
of $X$ at the boundaries of the state space $e_l$ and $e_r$, and is given by either a reflection, a jump or a combination
of both. The infinitesimal generators of a process solving (ref{introduction:sde}) are $\tilde{A}: C_c^2(E) \mapsto C(E\times U)$, called
the continuous generator, and $B: C_c^2(E) \mapsto C(E\times U)$, called the singular generator. 
For $f\in C_c^2(E)$, $\tilde{A}$ is defined by $\tilde{A}f(x,u)=b(x,u)f'(x)+\sigma^2(x,u)f''(x)$. $B$ is defined by either of
\begin{equation}
 \label{introduction:notation-and-formalism:form-of-b}
 Bf(x,u) = \pm f'(x)\qquad\mbox{or}\qquad Bf(x,u) = f(x+u)-f(x).
\end{equation}
The first form of $B$ models a reflection process ($+$ forcing a reflection to the right and $-$ forcing a reflection to the left)
and the second form models a jump process jumping from $x$ to $x+u$. With these generators, a 
specification of the dynamics that requires
 \begin{equation}
 \label{introduction:mgp}
 f(X_t) - f(x_0)-\int_0^t \tilde{A}f(X_s,u_s)ds - \int_0^t Bf(X_s, u_s)d\xi_s
 \end{equation}
 to be a martingale for all $f\in C_c^2$ is equivalent to (\ref{introduction:sde}) in terms of weak solutions. Hence,
 the values of the cost criteria determined by (\ref{introduction:lta-criterion}) and (\ref{introduction:ihd-criterion}) remain identical.
 The following relaxed formulation of (\ref{introduction:mgp}) is better suited for the purpose of stochastic control.
 \begin{defn}
 \label{introduction:def-relaxed-mgp}
  Let $X$ be a stochastic process with state space $E$, let $\Lambda$ be a stochastic process taking values in $\mathcal{P}(U)$, and let 
$\Gamma$ be a random variable taking values in the space of measures on $([0,\infty)\times E \times U)$, with 
$\Gamma([0,t]\times E \times U)\in \mathcal{M}([0,t]\times E \times U)$ for all $t$. The triplet $(X, \Lambda, \Gamma)$ is a relaxed solution
to the singular, controlled martingale problem for $(\tilde{A},B)$ if there is a filtration $\{\mathscr{F}_t\}_{t\geq 0}$ such that 
$X$, $\Lambda$ and $\Gamma$ are $\mathscr{F}_t$-progressively measurable and
  \begin{equation*}
   f(X_t) - f(x_0) - \int_0^t \int_U \tilde{A}f(X_s,u)\,\Lambda_s(du)\,ds - \int_{[0,t]\times E \times U} Bf(x,u)\,\Gamma(ds\times dx\times du)
  \end{equation*}
  is an $\{\mathscr{F}_t\}_{t\geq 0}$-martingale for all $f\in C^2_c(E)$.
 \end{defn}
The relaxation is given by the fact that the control is no longer represented by a process $u$, but is encoded in the random measures $\Lambda$
and $\Gamma$. Assume that the cost functions $\tilde{c}_0$ and $\tilde{c}_1$ are continuous and non-negative. 
The cost criteria for a relaxed solution of the singular, controlled martingale problem are
 \begin{equation}
 \label{introduction:lta-criterion-relaxed}
 \limsup_{t\rightarrow \infty} \frac{1}{t}\mathbb{E}\left[\int_0^t\int_U \tilde{c}_0(X_s,u)\,\Lambda_s(du)\,ds
 + \int_{[0,t]\times E \times U} \tilde{c}_1(x,u)\,\Gamma(ds\times dx\times du) \right],
 \end{equation}
 for the long-term average cost criterion, and for $\alpha>0$,
 \begin{equation}
 \label{introduction:ihd-criterion-relaxed}
  \mathbb{E}\left[\int_0^\infty\int_U e^{-\alpha s} \tilde{c}_0(X_s,u)\,\Lambda_s(du)\,ds
 + \int_{[0,\infty)\times E \times U} e^{-\alpha s} \tilde{c}_1(x,u)\,\Gamma(ds\times dx\times du) \right],
 \end{equation}
 for the infinite horizon discounted cost criterion. A stochastic control problem given by 
 (\ref{introduction:mgp}) together with (\ref{introduction:lta-criterion-relaxed}) or (\ref{introduction:ihd-criterion-relaxed}) 
 can be reformulated as an infinite dimensional linear program. To this end, we set
 \begin{equation*}
  c_0(x,u) = \left\{\begin{array}{ll}\tilde{c}_0(x,u)&\mbox{ if }\alpha=0\\ \tilde{c}_0(x,u)/\alpha&\mbox{ if }\alpha>0\end{array}\right.
 \quad \mbox{and} \quad c_1(x,u) = \left\{\begin{array}{ll}\tilde{c}_1(x,u)&\mbox{ if }\alpha=0\\ \tilde{c}_1(x,u)/\alpha&\mbox{ if }\alpha>0.\end{array}\right.
 \end{equation*}
 Furthermore, for $\alpha\geq 0$ define the operator $A:C_c^2(E) \mapsto C(E\times U)$ by
 \begin{equation}
  \label{introduction:notation-and-formalism:form-of-a}
  Af(x,u) = \tilde{A}f(x,u)-\alpha f(x)
 \end{equation}
 and the functional $Rf = -\alpha f(x_0)$, $x_0$ being
the starting point of the diffusion.
 \begin{defn}
 \label{introduction:lp}
 The infinite-dimensional linear program for a singular stochastic control problem is given by
\begin{equation*} 
\def\arraystretch{2.5}
\begin{array}{r@{}l}
    \mbox{Minimize} \quad &{}\int_{E\times U} c_0 d\mu_0 + \int_{E\times U} c_1 d\mu_1\\[0.2cm]
\mbox{Subject to} \quad &{}\left\{\def\arraystretch{1.2} \begin{array}{l} 
                            \int_{E\times U} A f d\mu_0 + \int_{E\times U} Bf d\mu_1 = Rf\quad \forall f\in C_c^2(E)\\
                            \mu_0 \in \mathcal{P}(E \times U)\\
                            \mu_1 \in \mathcal{M}(E \times U).\\
                           \end{array}\right. 
\end{array}
\end{equation*}
\end{defn}
The measures $\mu_0$ and $\mu_1$ are the expected occupation measures of $X$ and $\Gamma$. We frequently consider the measures
on $\left(E,\mathscr{B}(E)\right)$ given by $\mu_{0,E}(\cdot) = \mu_0(\cdot\times U)$ and $\mu_{1,E}(\cdot) = \mu_1(\cdot\times U)$.
The refer to these measures as the state space marginals of $\mu_0$ and $\mu_1$, respectively.
The properties of such linear programs and their relation to stochastic control problems for singular, controlled martingale problems are stated
in Theorem \ref{introduction:equivalence-lp-lta-opt}. These results use the notion of
a regular conditional probability defined as follows.
\begin{defn}
 Let $(E\times U, \mathscr{B}(E\times U), \mu)$ be a measure space, and let $P: E\times U \ni (x,u) \mapsto x\in E$ be the projection map
 onto $E$. Let $\mu_E$ be the distribution of $P$, which is identical to the state space marginal of $\mu$. A map 
 $\eta: \mathscr{B}(U)\times E \mapsto [0,1]$ is called a regular conditional probability if
\begin{enumerate}[\hspace{0.6cm}i)]
 \item for each $x\in E$, $\eta(\cdot,x):\mathscr{B}(U) \mapsto [0,1]$ is a probability measure,
 \item for each $V \in \mathscr{B}(U)$, $\eta(V,\cdot): E\mapsto [0,1]$ is a measurable function, and
 \item for all $V\in\mathscr{B}(U)$ and all $F \in \mathscr{B}(E)$ we have
 \begin{equation*}
  \mu(F\times V) = \int_F \eta(V,x)\mu_E(dx).
 \end{equation*}
\end{enumerate}
\end{defn}
\begin{thm}
 \label{introduction:equivalence-lp-lta-opt}
   The problem of minimizing either the long-term average cost criterion of (\ref{introduction:lta-criterion-relaxed}) or the
   infinite horizon discounted cost criterion of (\ref{introduction:ihd-criterion-relaxed}) over the set of all
   relaxed solutions $(X,\Lambda,\Gamma)$ to the singular, controlled martingale problem for $(\tilde{A},B)$
   is equivalent to the linear program stated in (\ref{introduction:lp}). Moreover, there exists an optimal
   solution $(\mu_0^*,\mu_1^*)$. Let $\eta_0^*$ and $\eta_1^*$ be the regular conditional probabilities of $\mu_0^*$ and $\mu_1^*$ with
   respect to their state space marginals. Then an optimal relaxed control is given in feedback form by $\Lambda_t^* = \eta_0^*(\cdot, X^*_t)$ and 
   $\Gamma^*(ds\times dx \times du) = \eta_1^*(du,x)\tilde{\Gamma}^*(ds\times dx)$ for a random measure $\tilde{\Gamma}^*$ on $[0,\infty)\times E$,
   where $(X^*, \Lambda^*, \Gamma^*)$ is a relaxed solution to the singular, controlled martingale problem for $(\tilde{A},B)$ having occupation
   measures $(\mu_0^*, \mu_1^*)$.
\end{thm}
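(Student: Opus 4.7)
The plan is to prove the equivalence in three movements and then derive existence and the feedback representation. Since $(\tilde A, B)$ is a singular controlled generator of the type treated in Kurtz--Stockbridge \cite{kurtz-stockbridge-existence} and Taksar \cite{taksar-linear-programming}, my plan is to assemble the argument out of those equivalence theorems, adapted to the joint LTA/discounted presentation here and the specific forms of $B$ in \eqref{introduction:notation-and-formalism:form-of-b}.

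For the forward direction, given a relaxed solution $(X,\Lambda,\Gamma)$ with finite cost, I would define the expected occupation measures
\[
\mu_0^{(t)}(F\times V) = \tfrac{1}{t}\,\mathbb{E}\!\left[\int_0^t \mathbf{1}_F(X_s)\,\Lambda_s(V)\,ds\right], \qquad
\mu_1^{(t)}(F\times V) = \tfrac{1}{t}\,\mathbb{E}[\Gamma([0,t]\times F\times V)]
\]
in the LTA case, and the analogous $\alpha e^{-\alpha s}$-weighted versions (without the $1/t$) in the discounted case. Taking expectations in the martingale identity of Definition \ref{introduction:def-relaxed-mgp} kills the martingale term and, after the $1/t$ or $\alpha$ renormalization, yields $\int Af\,d\mu_0^{(t)} + \int Bf\,d\mu_1^{(t)} = Rf + o(1)$ for every $f\in C_c^2(E)$. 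Compactness of $E\times U$ makes $\{\mu_0^{(t)}\}$ tight automatically; for $\mu_1^{(t)}$, a uniform mass bound follows from the constraint applied to a carefully chosen $f\in C_c^2(E)$ with $Bf$ bounded away from zero on the relevant boundary (this step is where the two explicit forms of $B$ enter). Passing to a weak subsequential limit produces a feasible $(\mu_0,\mu_1)$ whose LP value is no larger than the stochastic criterion value.

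For existence of an LP minimizer, the same compactness machinery applies directly to a minimizing sequence of feasible pairs: $\{\mu_0^{(n)}\}\subset \mathcal{P}(E\times U)$ is relatively weakly compact by compactness of $E\times U$, the a priori $\mu_1$ bound above holds uniformly, and the feasibility set is closed under weak convergence because the LP constraint pairs finite measures against continuous functions. Since $c_0,c_1$ are continuous and non-negative on the compact $E\times U$, the cost functional is weakly continuous, so every subsequential weak limit is a feasible optimum.

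The backward direction is the step I expect to be the main obstacle. From any LP optimizer $(\mu_0^*,\mu_1^*)$, disintegration (available because $U$ is a compact Polish space) yields regular conditional probabilities $\eta_0^*, \eta_1^*$. I would then construct $X^*$ as the weak solution of the martingale problem with averaged coefficients $\bar b(x)=\int_U b(x,u)\,\eta_0^*(du,x)$ and $\bar\sigma(x)=\int_U \sigma(x,u)\,\eta_0^*(du,x)$, and construct $\tilde\Gamma^*$ as either the reflection local time at $e_l,e_r$ (for the first form of $B$) or as the controlled jump counting measure (for the second form), with its total singular mass calibrated so that $\tilde\Gamma^*$ projects onto $\mu_{1,E}^*$. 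Verifying that the resulting triplet is a genuine relaxed solution with expected occupation measures exactly $(\mu_0^*,\mu_1^*)$ is an Echeverria--Weiss/Kurtz--Stockbridge-style identification argument: the adjoint LP constraint is precisely the integrated form of the martingale property against the test class $C_c^2(E)$, and uniqueness of the disintegration modulo $\mu_{i,E}^*$-null sets guarantees that the feedback form $\Lambda_t^*=\eta_0^*(\cdot,X_t^*)$ and $\Gamma^*(ds\times dx\times du)=\eta_1^*(du,x)\,\tilde\Gamma^*(ds\times dx)$ is the one claimed.
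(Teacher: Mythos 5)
The paper does not actually prove this theorem: its ``proof'' is a one-line citation to Kurtz and Stockbridge, Theorems 2.1 and 3.3, and all of the analytic content lives in that reference. Your outline reconstructs essentially the strategy of those cited proofs --- normalized expected occupation measures plus expectation of the martingale identity for the forward direction, tightness and weak closedness of the feasible set for existence, and a disintegration-plus-Echeverria-type identification for the converse --- so in that sense you are on the same track as the source the authors defer to. Two caveats are worth recording. First, your converse direction names the hard step rather than performing it: the existence of a relaxed solution $(X^*,\Lambda^*,\Gamma^*)$ to the \emph{singular} controlled martingale problem whose occupation measures are exactly $(\mu_0^*,\mu_1^*)$ is the singular extension of Echeverr\'ia's theorem, and proving it (rather than invoking it) is the entire substance of the cited Theorem 3.3; as a blind proof your write-up leaves that as a black box. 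You are also right to flag the uniform mass bound on $\mu_1^{(t)}$ as delicate --- for the jump form $Bf(x,u)=f(x+u)-f(x)$ one cannot in general choose $f$ with $Bf$ bounded away from zero uniformly in $u$, and finiteness of $\mu_1$ is genuinely an assumption baked into the LP formulation rather than a free consequence. Second, a small but real slip: with $\tilde Af(x,u)=b(x,u)f'(x)+\sigma^2(x,u)f''(x)$, averaging against $\eta_0^*$ produces the coefficient $\int_U\sigma^2(x,u)\,\eta_0^*(du,x)$ in front of $f''$, so the averaged diffusion coefficient must be $\bar\sigma(x)=\bigl(\int_U\sigma^2(x,u)\,\eta_0^*(du,x)\bigr)^{1/2}$, not $\int_U\sigma(x,u)\,\eta_0^*(du,x)$; by Jensen these differ whenever $\eta_0^*(\cdot,x)$ is nondegenerate, and the wrong choice would break the identification of the generator.
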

\begin{proof}
 See Kurtz and Stockbridge \cite{kurtz-stockbridge}, Theorem 2.1 and Theorem 3.3, respectively.
\end{proof}
By this result, it suffices to find optimal solutions to the infinite linear program when solving a singular stochastic control problem, and
approximate solutions to the linear program serve as approximate solutions to the control problem.
\Cref{discretization} presents how we discretize the infinite dimensional linear program to a computationally attainable formulation, 
which is the basis for
the numerical technique used in this paper. The analysis of this discretization scheme relies in part on the notion of weak convergence of
\textit{finite} measures which is defined next. Let $S$ be a measurable space in the following, equipped with a topology.
\begin{defn}
 Consider a sequence of finite measures $\{\mu_n\}_{n\in \mathbb{N}}$ and another finite measure $\mu$ on $S$. 
 We say that $\mu_n$ converges weakly to $\mu$, in symbol $\mu_n \Rightarrow \mu$, if for all $f\in C^u_b(S)$
 \begin{equation*}
 \int_S f(x) \mu_n(dx) \rightarrow \int_S f(x) \mu(dx)\quad\mbox{as }n\rightarrow \infty
 \end{equation*}
 holds.
\end{defn}
Note that we are considering finite measures, and not necessarily probability measures. In particular, we could encounter a situation
where the sequence of numbers $\{\mu_n(S)\}_{n\in\mathbb{N}}$ is unbounded. This differs from `classical' considerations of weak
convergence, which for example can be found in Billingsley \cite{billingsleyconvergence}. However, Bogachev \cite{bogachev-measure-theory}
(see Chapter 8 in Volume 2) offers a discussion of the concept of weak convergence in this more general case. Central to our purposes is
Theorem \ref{introduction:prokhorov}, which states sufficient conditions for the existence of weakly converging subsequences when
considering sequences of finite measures, based on the following two
concepts.
\begin{defn}
\label{introduction:defn-tight}
 A sequence of finite measures $\{\mu_n\}_{n\in \mathbb{N}}$ on $S$ is called tight if for each $\epsilon>0$, there is a compact set 
 $K_\epsilon$ in $S$ such that 
 \begin{equation*}
 \mu_n(K_\epsilon^C) < \epsilon
 \end{equation*}
 holds for all $n\in \mathbb{N}$.
\end{defn}
\begin{rmk}
\label{introduction:compact-remark}
 If $S$ is compact, any sequence of finite measures on $S$ is tight.
\end{rmk}
\begin{defn}
\label{introduction:defn-unifbound}
 A sequence of finite measures $\{\mu_n\}_{n\in \mathbb{N}}$ on $S$ is called uniformly bounded if for some $l\geq 0$, $\mu_n(S)\leq l$
 holds for all $n\in \mathbb{N}$. 
\end{defn}
If a sequence of finite measures on $S$ is tight and uniformly bounded, the existence of convergent subsequences is guaranteed by the 
following result.
\begin{thm}
\label{introduction:prokhorov}
 Let $\{\mu_n\}_{n\in \mathbb{N}}$ be a sequence of finite measures on $S$. Then, the following are equivalent.
 \begin{enumerate}[i)]
  \item $\{\mu_n\}_{n\in \mathbb{N}}$ contains a weakly convergent subsequence,
  \item $\{\mu_n\}_{n\in \mathbb{N}}$ is tight and uniformly bounded.
 \end{enumerate}
\end{thm}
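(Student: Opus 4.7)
The plan is to reduce both implications to the classical Prokhorov theorem for probability measures by exploiting that the total-mass functional $\nu\mapsto \nu(S)$ is continuous under weak convergence (since the constant function $1$ lies in $C^u_b(S)$). In this way, weak convergence of finite measures factors cleanly into convergence of the total masses together with weak convergence of the associated normalized probability measures.

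For the implication (ii) $\Rightarrow$ (i), uniform boundedness gives $\mu_n(S)\in[0,l]$, so Bolzano--Weierstrass produces a subsequence with $\mu_{n_k}(S)\to m\in[0,l]$. If $m=0$, the estimate $|\int f\,d\mu_{n_k}|\leq \|f\|_\infty\,\mu_{n_k}(S)$ immediately yields $\mu_{n_k}\Rightarrow 0$. If $m>0$, I discard finitely many indices and set $\tilde\mu_{n_k}=\mu_{n_k}/\mu_{n_k}(S)$. Tightness transfers to the normalized measures: given $\epsilon>0$, pick $K_\epsilon$ compact with $\mu_n(K_\epsilon^C)<\epsilon m/2$ uniformly, so $\tilde\mu_{n_k}(K_\epsilon^C)<\epsilon$ for $k$ large. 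Classical Prokhorov applied to $\{\tilde\mu_{n_k}\}$ yields a further subsequence $\tilde\mu_{n_{k_j}}\Rightarrow\tilde\mu$, and then $\mu_{n_{k_j}}=\mu_{n_{k_j}}(S)\,\tilde\mu_{n_{k_j}}\Rightarrow m\tilde\mu$ by multiplying the test-function integrals.

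For the implication (i) $\Rightarrow$ (ii), interpreted in the standard relative-compactness sense that every subsequence admits a weakly convergent sub-subsequence, I argue by contrapositive. If uniform boundedness fails, a subsequence has $\mu_{n_k}(S)\to\infty$, and any weakly convergent sub-subsequence $\mu_{n_{k_j}}\Rightarrow\mu$ would force $\mu_{n_{k_j}}(S)\to\mu(S)<\infty$ via $f\equiv 1$, a contradiction. If tightness fails, there is $\epsilon_0>0$ such that for every compact $K$ some $n$ satisfies $\mu_n(K^C)\geq\epsilon_0$. Exhausting $S$ by compacts $K_m\uparrow S$, I extract $\mu_{n_m}$ with $\mu_{n_m}(K_m^C)\geq\epsilon_0$, pass to a sub-subsequence $\mu_{n_{m_i}}\Rightarrow\mu$, and build uniformly continuous cutoffs $\phi_m\in C^u_b(S)$ with $\phi_m=0$ on $K_m$ and $\phi_m=1$ on $K_{m+2}^C$. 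Then $\int\phi_m\,d\mu_{n_{m_i}}\geq\mu_{n_{m_i}}(K_{m+2}^C)\geq\epsilon_0$ for $m_i\geq m+2$, so weak convergence gives $\int\phi_m\,d\mu\geq\epsilon_0$ for every $m$. But $\phi_m\to 0$ pointwise on $S$ and $\mu$ is finite, so dominated convergence forces $\int\phi_m\,d\mu\to 0$, contradiction.

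The main obstacle is the tightness half of the contrapositive: it requires enough regularity of $S$ to construct the uniformly continuous cutoffs $\phi_m$ separating compacts from the complements of larger compacts. For the spaces arising in this paper, which are products of compact intervals (possibly with a time factor $[0,\infty)$), this is straightforward via truncated distance functions; in greater generality one needs $S$ to be metrizable or at least completely regular. A secondary delicate point is the boundary case $m=0$ in the direction (ii) $\Rightarrow$ (i), handled by normalizing only once $m>0$ is known, so that division by zero is avoided and the weak limit is simply the zero measure.
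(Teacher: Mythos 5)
The paper offers no proof of this theorem at all --- it is quoted verbatim from Bogachev (Theorem 8.6.2, Volume 2) --- so your self-contained reduction to the classical Prokhorov theorem for probability measures is a genuinely different and more informative route. Your argument for (ii) $\Rightarrow$ (i) (extract convergence of the total masses by Bolzano--Weierstrass, dispose of the degenerate limit $m=0$ directly, normalize, transfer tightness, apply classical Prokhorov, rescale) is correct, and this is the only direction the paper actually uses. Two remarks on the converse. First, you are right to reinterpret (i): under the literal reading the implication (i) $\Rightarrow$ (ii) is false, e.g. $\mu_n = n\,\delta_{x}$ for odd $n$ and $\mu_n=\delta_{x}$ for even $n$ has a weakly convergent subsequence but is not uniformly bounded; Bogachev's theorem is stated for a family in the relative-sequential-compactness sense, which is exactly the reading you adopt, and the mismatch is harmless to the paper since only (ii) $\Rightarrow$ (i) is ever invoked. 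Second, your non-tightness branch needs more than complete regularity: you need an exhaustion $K_m\uparrow S$ by compacts that is cofinal in the compact subsets (so that, using Ulam's theorem that each single finite Borel measure is tight, the witnessing indices $n_m$ can be taken strictly increasing and genuinely form a subsequence), together with the uniformly continuous cutoffs $\phi_m$. Both requirements hold for the spaces in this paper --- $E\times U$ is compact, so tightness is automatic and that branch is vacuous, while $[0,\infty)\times E\times U$ is locally compact and $\sigma$-compact --- so the proof is sound in the setting where it is needed, but the hypothesis should be stated as local compactness plus $\sigma$-compactness (or Polishness, with a different tightness argument) rather than complete regularity.
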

\begin{proof}
 See Bogachev \cite{bogachev-measure-theory}, Theorem 8.6.2.
\end{proof}

\section{Approximation}
We begin the presentation of the proposed method by describing the discretization scheme. Then, we discuss the assumption being necessary
for the convergence of the method.
\label{approximation-section}
\subsection{Discretization}
\label{discretization}
The proposed numerical technique is based on a discretization of the infinite-dimensional linear program in three steps.
First, we introduce a limit on the full mass of the measure $\mu_1$. Then, we restrict the number of constraint functions. Thirdly, we introduce 
discrete versions of the measures. In the process, several assumptions on the measure $\mu_0$ are made. For the sake of exposition,
we elaborate on these assumptions separately in \Cref{assumptions}.\\
Since the discretization brings forth several distinct sets of measures, we define the cost criterion 
using the following, general formulation.
\begin{equation*}
J: \mathcal{P}(E\times U) \times \mathcal{M}(E\times U) \ni(\mu_0,\mu_1) \mapsto J(\mu_0,\mu_1) = \int_{E\times U} c_0 d\mu_0 + \int_{E\times U} c_1 d\mu_1 \in \mathbb{R}_{+}
\end{equation*}
We choose to consider $C_c^2(E)$ as a normed space in the right sense. Set $\|f\|_{\mathscr{D}} = \|f\|_\infty +\|f'\|_\infty + \|f''\|_\infty$
and define $\mathscr{D}_\infty = (C_c^2(E), \|\cdot \|_{\mathscr{D}})$ to designate that we consider $C_c^2(E)$ to be a specific normed space.
Set
\begin{equation*}
 \mathscr{M}_\infty = \{ (\mu_0, \mu_1) \in \mathcal{P}(E\times U) \times \mathcal{M}(E\times U) : 
 \int A f d\mu_0 + \int Bf d\mu_1 = Rf \quad\forall f \in \mathscr{D}_\infty\}.
\end{equation*}
For analytical purposes, we introduce an upper bound on $\mu_1(E\times U)$. For $l>0$ define
\begin{equation}
\label{approximation:ml-mass-restriction}
 \mathscr{M}^l_\infty = \{(\mu_0, \mu_1) \in \mathscr{M}_\infty : \mu_1(E\times U)\leq l \}.
\end{equation}
\begin{rmk}
\label{approximation:rmk-on-lbound}
As $l$ increases, more measures of $\mathscr{M}_\infty$ will lie in $\mathscr{M}^l_\infty$. For $l$ large enough, the optimal
solution will lie in $\mathscr{M}^l_\infty$, as we have that $\mu^*_1\in \mathcal{M}(E\times U)$ and hence $\mu^*_1(E)<\infty$.
\end{rmk}
\begin{defn}
The $l$-bounded infinite-dimensional linear program is given by\\
\begin{equation*}
\min\left\{J(\mu_0,\mu_1) | (\mu_0,\mu_1) \in \mathscr{M}^l_\infty \right\}.
\end{equation*}
\end{defn}
\noindent The set $\mathscr{M}^l_\infty$ features an infinite set of constraints given by all $f\in \mathscr{D}_\infty$ and 
measures $\mu_0$ and $\mu_1$ having an infinite number of degrees of freedom.
First, we discretize the set of constraints using B-spline basis functions. To construct these basis
functions, fix $q\in \mathbb{N}$ and consider a finite set of pointwise distinct grid points $\{e_k\}_{k=-3}^{q+3}$ in $E$, with $e_0=e_l$,
$e_r=e_q$ and $e_{k}<e_{k+1}$ for $k=-3,-2,\ldots,q+2$.
\begin{defn}
 The set of cubic B-spline basis functions for a grid $\{e_k\}_{k=-3}^{q+3}$ is defined on $\mathbb{R}$ by
 \begin{equation*}
 f_k(x) = (e_{k+4}-e_k) \sum_{i=k}^{k+4}\frac{\left[(e_i-x)^3\right]^+}{\Psi'_k(e_i)},\quad k=-3,-2,\ldots, q-1,
 \end{equation*}
 where
 \begin{equation*}
 \Psi_k(x) = \prod_{i=k}^{k+4} (x-e_i),\quad k=-3,-2,\ldots,q-1.
 \end{equation*}
\end{defn}
An analysis of these basis function is given in de Boor \cite{deboorsplines}. Provided that 
\begin{equation*}
\displaystyle \max_{k=-3,\ldots,q+2}\;\left(e_{k+1}-e_k\right)\rightarrow 0\quad\mbox{and}\quad
\frac{\displaystyle \max_{k=-3,\ldots,q+2}\;\left(e_{k+1}-e_j\right)}{\displaystyle \min_{k=-3,\ldots,q+2}\;\left(e_{k+1}-e_j\right)}\rightarrow 1
\end{equation*}
as $n\rightarrow \infty$, Theorem 1 of Hall and Meyer \cite{cubic-spline-approx} holds and the following statement can be shown.
\begin{prop}
\label{approximationm:dinf-separable}
 The normed space $\mathscr{D}_\infty$ is separable and a
countable basis\\ $\{f_k\}_{k\in\mathbb{N}}$ is given by the cubic B-splines basis functions.
\end{prop}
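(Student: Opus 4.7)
The plan is to produce a countable collection of B-splines from a sequence of increasingly refined grids and then show that finite rational linear combinations of these splines are dense in $\mathscr{D}_\infty$ with respect to the norm $\|\cdot\|_{\mathscr{D}} = \|\cdot\|_\infty + \|(\cdot)'\|_\infty + \|(\cdot)''\|_\infty$.

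First, I would fix a sequence of grids $\{G^{(n)}\}_{n\in\mathbb{N}}$ on $E$ with rational nodes whose maximal cell width tends to zero and which satisfy the quasi-uniformity ratio condition stated just before the proposition; uniform grids with $q_n = 2^n$ subintervals are the simplest choice. Enumerating the cubic B-spline basis functions associated with each $G^{(n)}$ produces a countable family $\{f_k\}_{k\in\mathbb{N}}$. Separability of $\mathscr{D}_\infty$ then reduces to showing that for every $f \in C^2_c(E)$ and every $\epsilon > 0$ some finite rational linear combination of the $f_k$'s lies within $\epsilon$ of $f$ in the $\|\cdot\|_{\mathscr{D}}$ norm.

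The density argument proceeds in two steps. Step one is to approximate $f$ by a smooth function $f_\delta \in C^4_c(E)$ in the $\|\cdot\|_{\mathscr{D}}$ norm. This is achieved by standard mollification: since the support of $f$ lies strictly inside $E$, the convolution $f_\delta = f \ast \rho_\delta$ with a smooth mollifier is in $C^\infty_c(E)$ for $\delta$ small enough, and because $f^{(j)} \ast \rho_\delta \to f^{(j)}$ uniformly for $j = 0,1,2$, we obtain $\|f - f_\delta\|_{\mathscr{D}} \to 0$. Step two then invokes Theorem 1 of Hall and Meyer \cite{cubic-spline-approx}: under the stated mesh and ratio conditions, there is a constant $C$ independent of $n$ such that for any $g \in C^4$ the cubic spline interpolant $S^{(n)}g$ on $G^{(n)}$ satisfies $\|g^{(j)} - (S^{(n)}g)^{(j)}\|_\infty \le C\, h_n^{4-j}\, \|g^{(4)}\|_\infty$ for $j = 0,1,2$, where $h_n$ is the mesh width of $G^{(n)}$. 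Choosing $n$ large enough forces $\|f_\delta - S^{(n)} f_\delta\|_{\mathscr{D}} < \epsilon/2$. Combining the two steps with the triangle inequality, and finally replacing the real coefficients in the expansion of $S^{(n)} f_\delta$ with sufficiently close rationals, completes the density claim.

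The main obstacle is the regularity mismatch: Hall and Meyer's bounds are driven by $\|g^{(4)}\|_\infty$, whereas a generic element of $\mathscr{D}_\infty = (C^2_c(E), \|\cdot\|_{\mathscr{D}})$ need not even be thrice differentiable. Bridging this gap through mollification is conceptually standard, but the convergence must be controlled in the full $C^2$ sense rather than merely pointwise, which is why one has to differentiate $f \ast \rho_\delta$ by moving derivatives onto $f$ rather than $\rho_\delta$. A secondary technical point is verifying that the constant in the Hall and Meyer estimate stays bounded along the chosen grid sequence, which is exactly the role of the quasi-uniformity ratio condition; and one must also confirm that $\mathrm{supp}(f_\delta) \subset (e_l, e_r)$ for all sufficiently small $\delta$, which follows from the strict containment $\mathrm{supp}(f) \subset (e_l, e_r)$ inherent to $C^2_c(E)$.
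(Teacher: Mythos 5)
Your proposal is correct and follows exactly the route the paper intends: the paper offers no written proof of this proposition beyond the remark that Theorem 1 of Hall and Meyer applies under the stated mesh-width and mesh-ratio conditions, and your argument is the natural completion of that citation. The one substantive ingredient you add --- mollifying $f\in C^2_c(E)$ to a $C^4$ function before invoking the Hall--Meyer interpolation bounds, with convergence controlled in the full $\|\cdot\|_{\mathscr{D}}$ norm by moving derivatives onto $f$ --- is precisely the regularity bridge the paper leaves implicit, and you handle it correctly.
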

 For fixed $\tilde{q}\in\mathbb{N}$, define a grid using the dyadic partition of $E$ given by
 \begin{equation*} 
  e_k = e_l + \frac{e_r-e_l}{2^{\tilde{q}}}\cdot k,\qquad k=-3\ldots, 2^{\tilde{q}}+3.
 \end{equation*}
and consider the $n:=2^{\tilde{q}}+2$ B-spline basis functions $\{f_k\}_{k=1}^n$ on this grid. This allows us to define
\begin{align*}
 \mathscr{M}_n &= \left\{ (\mu_0, \mu_1) \in \mathcal{P}(E\times U) \times \mathcal{M}(E\times U):\right.\\
	  &\left.\qquad\qquad\qquad \int A f_k d\mu_0 + \int Bf_k d\mu_1 = Rf_k,\quad k=1,\ldots,n\right\}
\end{align*}
and we can define $\mathscr{M}_n^l$ in a similar manner using the mass restriction on $\mu_1$ as seen in
(\ref{approximation:ml-mass-restriction}).
\begin{defn}
The $l$-bounded $(n,\infty)$-dimensional linear program is given by
\begin{equation*}
\min\left\{J(\mu_0, \mu_1) | (\mu_0, \mu_1) \in \mathscr{M}^l_n \right\}.
\end{equation*}
\end{defn}
\noindent Next, we discretize the measures. Theorem \ref{introduction:equivalence-lp-lta-opt} reveals that it is
sufficient to regard feedback controls which can be represented by regular conditional probabilities.
In particular, this result states that we can consider measures $(\mu_0,\mu_1)$ which can be decomposed according to
$\mu_0(dx\times du) = \eta_0(dx,u)\mu_{0,E}(dx)$ and $\mu_1(dx\times du) = \eta_1(dx,u)\mu_{1,E}(dx)$ 
for two regular conditional probabilities $\eta_0$ and $\eta_1$.
We furthermore assume that, first, for any interval or singleton $V \subset U$, $x\mapsto \eta_0(V,x)$ is continuous almost everywhere with respect to Lebesgue
measure, second, that $\mu_{0,E}$ has a density $p$ with respect to Lebesgue measure and third, that $p$ satisfies the constraint that 
$\lambda\left(\left\{x : p(x) = 0 \right\} \right)=0$. In other words, $p$ must only be equal to zero on a set of Lebesgue measure $0$.
The particulars of these assumptions are discussed in \Cref{assumptions}, and we continue here with the description of the approximation scheme.\\
Define a sequence $k_m$ as follows. As $c_0$, $b$ and $\sigma$ are continuous over a
compact set, for all $m\in\mathbb{N}$, there is a $\delta_m>0$ such that for all $u,v\in U$  with $\vert u-v \vert\leq \delta_m$, it is true that
\begin{equation}
\label{convergence:discrete-U-bounds}
\max\left\{\vert c_0(x,u) - c_0(x,v)\vert,\vert b(x,u)-b(x,v)\vert,\left\vert \frac{1}{2}\sigma^2(x,u)-\frac{1}{2}\sigma^2(x,v)\right\vert\right\} \leq\frac{1}{2^{m+1}},
\end{equation}
uniformly in $x$.
Set $k_m$ to be the smallest integer such that $\frac{u_r-u_l}{2^{k_m}}\leq\delta_m$.
The parameter $k_m$ controls the discretization of the control space $U$, and the specific choice enables an accurate approximate integration of
the cost function $c_0$ and the functions $Af_k$ against the relaxed control $\eta_0$ in the convergence proof of \Cref{convergence}.
So, define
\begin{equation}
\label{convergence:discrete-U-set-points}
U^{(m)} = \{ u_j = u_l + \frac{u_r-u_l}{2^{k_m}}\cdot j, j=0,\ldots, 2^{k_m}\}.
\end{equation}
Similarly, we set
\begin{equation}
\label{convergence:discrete-E-set-points}
E^{(m)} = \{ e_j = e_l + \frac{e_r-e_l}{2^{m}}\cdot j, j=0,\ldots, 2^{m}\}.
\end{equation}
The union of these sets over all $m\in\mathbb{N}$ is dense in the control space and state space, respectively.
The number $m$ is called the discretization level. It determines the degrees of freedom of the discrete measures 
$\hat{\mu}_0$ (approximating $\mu_0$) and $\hat{\mu}_1$ (approximating $\mu_1$), which are defined as follows.\\
First, we approximate the density $p$ of $\mu_{0,E}$. Choose a countable basis of $L^1(E)$, say $\{p_n\}_{n\in \mathbb{N}_0}$, given by indicator functions over subintervals of $E$.
We truncate this basis to $p_0,\ldots,p_{2^{m}-1}$ (given by the indicator functions of the intervals of length $1/2^{m}$, compare
(\ref{convergence:discrete-E-set-points})) to approximate the density $p$ by
\begin{equation}
\hat{p}_{m}(x) = \sum_{j=0}^{2^{m}-1} \gamma_j p_j(x) \label{approximation:mnk_density}
\end{equation}
where $\gamma_j\in \mathbb{R}_+$, $j=0,\ldots, 2^{m}-1$ are weights to be chosen under the constraint that\\
$\int_E \hat{p}_{m}(x) dx=1$. 
Set $E_j=[x_j,x_{j+1})$ for $j=0,1,\ldots 2^{m}-2$ and $E_{2^{m}-1} = [x_{2^{m}-1}, x_{2^{m}}]$ to define 
\begin{equation}
\label{approximation:mnk_kernel_c}
\hat{\eta}_{0,m}\left(V, x\right) = \sum_{j=0}^{2^{m}-1}\sum_{i=0}^{2^{k_m}} \beta_{j,i}I_{E_j}(x) \delta_{\{u_i\}}(V),
\end{equation}
where $\beta_{j,i} \in \mathbb{R}_{+}$, $j=0,\ldots, m-1, i=0, \ldots, k_m$ are 
weights to be chosen under the constraint that $\sum_{i=0}^{2^{m}-1} \beta_{j,i}=1$ for $j=0,\ldots,m-1$. We approximate
$\eta_0$ using (\ref{approximation:mnk_kernel_c}), which means that this
relaxed control is approximated by point masses in $U$-`direction' and piecewise constant in 
$E$-`direction'. Then, we set
$\hat{\mu}_{0,m}(du\times dx) = \hat{\eta}_{0,m}(du,x)\hat{p}_{m}(x)dx$.\\
To approximate the singular occupation measure $\mu_1$, we use that the process is only showing singular behavior at $e_l$ and $e_r$.
Thus, if we introduce the regular conditional probability $\eta_1$ and write $\mu_1(dx \times du) = \eta_1(du,x) \mu_{1,E}(dx)$, and for 
$F\in \mathscr{B}(E)$, we have for $F\in\mathscr{B}(E)$
\begin{equation}
\mu_{1,E}(F) = w_1 \delta_{\{e_l\}}(F) + w_2 \delta_{\{e_r\}}(F)\label{approximation:inft_bss_bcs:mnk_singular}
\end{equation}
with $w_1,w_2\in \mathbb{R}_+$. We approximate the relaxed control $\eta_1$ by 
\begin{equation}
\label{approximation:mnk_kernel_s}
\hat{\eta}_{1,m}(V,e_l) = \sum_{i=0}^{2^{k_m}}\zeta_{1,i} \delta_{\{u_i\}}(V),\quad
\hat{\eta}_{1,m}(V,e_r) = \sum_{i=0}^{2^{k_m}}\zeta_{2,i} \delta_{\{u_i\}}(V)
\end{equation}
with $\sum_{i=0}^{2^{k_m}}\zeta_{j,i}=1$ for $j=1,2$.
So, we have $\hat{\mu}_{1,m}(dx\times du) = \hat{\eta}_{1,m}(du,x)\mu_{1,E}(dx)$. In summary, we consider measures of the form
\begin{equation*}
\left(\hat{\mu}_{0,m}, \hat{\mu}_{1,m}\right)(dx\times du) = \left(\hat{\eta}_{0,m}(du,x)\hat{p}_{m}(x)dx ,\hat{\eta}_{1,m}(du,x)\mu_{1,E}(dx)\right)
\end{equation*}
and we introduce the notation
\begin{multline*}
\mathscr{M}^l_{n,m} = \big\{(\mu_{0,m}, \mu_{1,m}) \in \mathscr{M}^l_n : \left(\mu_{0,m}, \mu_{1,m}\right)(du,dx)\\
= \left(\hat{\eta}_{m}(du,x)\hat{p}_{m}(x)dx ,\hat{\eta}_{m}(du,x)\mu_{1,E}(dx)\right) \big\}.
\end{multline*}
This finalizes the discretization of the measures and leaves us with the following linear program.
\begin{defn}
The $l$-bounded $(n,m)$-dimensional linear program is given by
\begin{equation*}
\inf\left\{J(\mu_0,\mu_1) | (\mu_0,\mu_1) \in \mathscr{M}^l_{n,m} \right\}.
\end{equation*}
\end{defn}
This linear program is linear in the coefficients given by the products $\beta_{j,i}\cdot \gamma_j$ and $\zeta_{j,i}\cdot \alpha_j$, and the cost functional can as well
be expressed as a linear combination of these coefficients.\\
Up to this point, we introduced four sets of measures, $\mathscr{M}_\infty$, $\mathscr{M}^l_\infty$, $\mathscr{M}^l_n$ and $\mathscr{M}^l_{n,m}$,
and we later on will use $\epsilon$-optimal solutions in $\mathscr{M}^l_{n,m}$ to approximate the optimal solution in $\mathscr{M}_\infty$.
However the relations between those sets are $\mathscr{M}^l_\infty\subset \mathscr{M}_\infty$,
$\mathscr{M}^l_n \supset \mathscr{M}^l_\infty$ and $\mathscr{M}^l_{n,m} \subset \mathscr{M}^l_n$. As this does not provide a clear nested
structure, it has to be carefully analyzed how optimal solutions in these sets relate to each other. This is presented in
\Cref{convergence}.

\subsection{Assumptions}
\label{assumptions}
Before we move to the presentation of the convergence argument, we elaborate on the assumptions on $\mu_0$ which were made 
in \Cref{discretization}.
These assumptions restrict the set of feasible measures considered in the linear program given by (\ref{introduction:lp}) 
to measures which allow the approximation to converge. 
Albeit technical, the imposed restrictions do not curtail the set of feasible measures beyond what can be considered to be 
`implementable' solutions, in other words, the set of measures will still be large enough to include any type of control that could be used in a
real-world application.\\
First, we assume that the state space marginal $\mu_{0,E}$ of the expected occupation measures $\mu_0$ has a density $p$ with respect to the 
Lebesgue measure. As shown in \cite{mgvieten-thesis}, Section II.2, this is guaranteed when certain assumptions on the regular conditional probability 
$\eta_0$ of the continuous occupation measure $\mu_0$ are fulfilled. To be precise, we have to assert that the functions
\begin{equation}
x \mapsto \int_U b(x,u)\,\eta_0(du,x),\qquad
x \mapsto \int_U \sigma^2(x,u)\,\eta_0(du,x)\label{assumptions-when-density}\\
\end{equation}
are continuous everywhere except for finitely many points in $E$. On the one hand, this is satisfied for controls of the form given by
(\ref{approximation:mnk_kernel_c}), which includes the important class of so-called bang-bang controls. Bang-bang controls put full mass 
on either of the end points $u_l$ and $u_r$ of the control space $U$. Usually, when the cost function $c_0$ does not depend on the control
value $u$, the optimal solution is given by a bang-bang control. If this is not the case, optimal controls are frequently
given in the form of a continuous function 
$v:E\mapsto U$ and a control satisfying $\eta_0(\left\{v(x)\right\},x)=1$. It is easy to see that in both cases the two functions defined in
(\ref{assumptions-when-density}) are continuous except for finitely many points.\\
Secondly, we assume that $p$ must be equal to zero only on a set of Lebesgue measure $0$. The analysis in \cite{mgvieten-thesis}, Section II.2
shows that the densities encountered when using both the long-term average cost criterion and the discounted infinite horizon criterion satisfy
this assumption.
Thirdly, we assume
that for any set $V\subset U$ which is either an interval or a singleton, the function $x\mapsto \eta_0(V,x)$ is continuous almost everywhere with
respect to Lebesgue measure. This allows us to approximate the function 
$x\mapsto \eta_0(V,x)$ uniformly by a function which is piecewise constant over intervals, and the approximate function values on these intervals
are given by the values of $\eta_0(V,x)$ at the left endpoints of the intervals. This makes the statement of 
(\ref{convergence:approximation_of_control}) of the convergence argument true.
Controls of the form given in (\ref{approximation:mnk_kernel_c}) satisfy this requirement.
However, if $\eta_0$ fulfills $\eta_0(\left\{v(x)\right\},x)=1$ for some continuous function $v$ on $E$, we have to assert
more regularity on $v$, according to the following definition.
\begin{defn}
 A continuous function $v:E\mapsto U$ is said to have finitely many modes if there are finitely many points
 $e_l=\hat{y}_1< \hat{y}_{2} < \ldots <\hat{y}_k=e_r$ such that for all $2\leq i \leq k-1$, there are points $a_i$ and $b_i$ with
 $\hat{y}_{i-1}<a_i<\hat{y}_i<b_i<\hat{y}_{i+1}$ and either of the following statements hold:
 \begin{enumerate}[i)]
  \item $v$ is strictly increasing on $(a_i,\hat{y}_i)$ and strictly decreasing on $(\hat{y}_i, b_i)$ as well as
  increasing on $(\hat{y}_{i-1},\hat{y}_i)$ and decreasing on $(\hat{y}_i, \hat{y}_{i+1})$,
  \item $v$ is strictly decreasing on $(a_i,\hat{y}_i)$ and strictly increasing on $(\hat{y}_i, b_i)$ as well as
  decreasing on $(\hat{y}_{i-1},\hat{y}_i)$ and increasing on $(\hat{y}_i, \hat{y}_{i+1})$.
 \end{enumerate}
\end{defn}
We assume that $v$ only has finitely many modes in the following. The rationale behind this assumption is as follows. Obviously, 
$x\mapsto \eta_0(V,x)$ is piecewise constant, either $0$ or $1$. The fact that $v$ `oscillates' only finitely many times between its 
modes ensures that $x\mapsto \eta_0(V,x)$ does not switch from $0$ to $1$ or from $1$ to $0$ more than finitely many times, and hence it
is discontinuous on a set that has measure $0$ with respect to Lebesgue measure. \\

\section{Convergence}
The first part of this section gives an overview of the convergence argument, illustrating the main ideas of the analysis.
The proofs of the propositions and corollary are given in the second part.
\label{convergence}
\subsection{Statement of the main results}
\label{convergence-main-results}
The $l$-bounded $(n,m)$-dimensional linear program introduced in \Cref{discretization} is a finite dimensional linear program that can be solved with 
standard solvers that are available in numerical libraries, and hence optimal solutions are attainable. We proceed
to show that the optimal solution to the $l$-bounded $(n,m)$-dimensional linear program is an $\epsilon$-optimal solution to the infinite dimensional program for $l$, $n$ and $m$
large enough. We use the notations 
\begin{align*}
&{ }J^{*}\quad=\inf\left\{J(\mu_0,\mu_1) | (\mu_0,\mu_1) \in \mathscr{M}^l_\infty \right\}\\
&{ }J^{*}_n\quad=\inf\left\{J(\mu_0,\mu_1) | (\mu_0,\mu_1) \in \mathscr{M}^l_{n} \right\}\\
&{ }J^{*}_{n,m}\,= \inf\left\{J(\mu_0,\mu_1) | (\mu_0,\mu_1) \in \mathscr{M}^l_{n,m} \right\}.
\end{align*}
For $l$ large enough, $J^*$ is indeed the optimal solution
to the unbounded problem, as stated in Remark \ref{approximation:rmk-on-lbound}, in other words,
$J^*=\min\left\{J(\mu_0,\mu_1) | (\mu_0,\mu_1) \in \mathscr{M}_\infty \right\}$\\
Since an infimum might not be computationally
attainable in $\mathscr{M}^l_\infty$ and $\mathscr{M}^l_{n}$, we withdraw to the slightly relaxed optimization problem of finding an 
$\epsilon$-optimal solution, in other
words, we try to find a pair of measures $(\mu_0^\epsilon, \mu_1^\epsilon)\in \mathscr{M}^l_\infty$ such that
\begin{equation*} 
J(\mu_0^\epsilon, \mu_1^\epsilon)-J(\mu_0,\mu_1) \leq \epsilon \quad \forall (\mu_0,\mu_1)\in \mathscr{M}^l_\infty.
\end{equation*}
Note that trivially, $J(\mu_0^\epsilon, \mu_1^\epsilon)-J^*\geq 0$. The $\epsilon$-optimality for $\mathscr{M}^l_{n}$ is defined analogously.
The following convergence analysis proves that we can find $\epsilon$-optimal measures in $\mathscr{M}^l_\infty$ using the approximation
proposed in \Cref{approximation-section}.
The outline of the proof is as follows. First, it is shown that it suffices to find an $\epsilon$-optimal solution in 
$\mathscr{M}^l_{n}$.
\begin{prop}
\label{convergence:proposition1}
 For each $n\in\mathbb{N}$, assume that $(\mu_{0,n}^\epsilon,\mu_{1,n}^\epsilon)\in \mathscr{M}^l_n$ and that for $n\in\mathbb{N}$, 
 $(\mu_{0,n}^\epsilon,\mu_{1,n}^\epsilon)$ is an
 $\epsilon$-optimal solution for the $l$-bounded, $(n,\infty)$-dimensional linear program. Then, for $\delta>0$, there exists an $N(\delta)$ such that
 \begin{equation*}
 J(\mu_{0,n}^\epsilon,\mu_{1,n}^\epsilon) - J^*  \leq 2 \epsilon + \delta.
 \end{equation*}
 for all $n\geq N(\delta)$.
\end{prop}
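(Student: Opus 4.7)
My plan splits the bound into a trivial upper-bound contribution and a weak-convergence estimate. Since only $n$ of the infinitely many B-spline constraints remain in the $(n,\infty)$-dimensional program, $\mathscr{M}_n^l \supset \mathscr{M}_\infty^l$, so $J_n^* \le J^*$. Combined with the $\epsilon$-optimality hypothesis $J(\mu_{0,n}^\epsilon,\mu_{1,n}^\epsilon) \le J_n^* + \epsilon$, this already yields $J(\mu_{0,n}^\epsilon,\mu_{1,n}^\epsilon) - J^* \le \epsilon$, which sits inside the claimed $2\epsilon + \delta$. The remaining content, crucial for chaining this proposition with the ones that follow, is to show that the $\epsilon$-optimal measures accumulate in $\mathscr{M}_\infty^l$, so that an extra $\delta$ of weak-convergence error is all the slack needed.

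To make this precise I plan to extract weak-convergence limits. Since $E \times U$ is compact, Remark \ref{introduction:compact-remark} gives tightness of $\{\mu_{0,n}^\epsilon\} \subset \mathcal{P}(E\times U)$, and the bound $\mu_{1,n}^\epsilon(E\times U) \le l$ makes $\{\mu_{1,n}^\epsilon\}$ tight and uniformly bounded. Theorem \ref{introduction:prokhorov} then furnishes, for every subsequence, a further weakly convergent subsequence $(\mu_{0,n_k}^\epsilon, \mu_{1,n_k}^\epsilon) \Rightarrow (\bar\mu_0, \bar\mu_1)$; testing against the constant function $1$ preserves $\bar\mu_1(E\times U) \le l$. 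Feasibility of the limit, namely $(\bar\mu_0, \bar\mu_1) \in \mathscr{M}_\infty^l$, is the heart of the argument. For any fixed $f \in C_c^2(E)$, by Proposition \ref{approximationm:dinf-separable} I choose B-spline approximants $\phi^{(n_k)}$ from the $n_k$-th basis with $\|\phi^{(n_k)} - f\|_{\mathscr{D}} \to 0$. The constraint holds exactly for each $\phi^{(n_k)}$, and since $A$, $B$, and $R$ are bounded linear in $f$ with respect to $\|\cdot\|_{\mathscr{D}}$ (with a Lipschitz constant uniform over measures obeying $\mu_0(E\times U) = 1$ and $\mu_1(E\times U) \le l$), the substitution error in replacing $\phi^{(n_k)}$ by $f$ vanishes. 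Weak convergence against the continuous bounded integrands $Af$ and $Bf$ then transfers the constraint at $f$ to $(\bar\mu_0, \bar\mu_1)$.

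With $(\bar\mu_0, \bar\mu_1) \in \mathscr{M}_\infty^l$ in hand, continuity and boundedness of $c_0, c_1$ on compact $E\times U$ give $J(\mu_{0,n_k}^\epsilon, \mu_{1,n_k}^\epsilon) \to J(\bar\mu_0, \bar\mu_1)$, and feasibility gives $J(\bar\mu_0, \bar\mu_1) \ge J^*$. Pairing this with the trivial upper bound pins the limit into $[J^*, J^* + \epsilon]$; since every subsequence of $\{J(\mu_{0,n}^\epsilon, \mu_{1,n}^\epsilon)\}$ has a sub-subsequence with limit in this interval, $\limsup_n J(\mu_{0,n}^\epsilon, \mu_{1,n}^\epsilon) \le J^* + \epsilon$, so for any $\delta > 0$ there exists $N(\delta)$ with $J(\mu_{0,n}^\epsilon, \mu_{1,n}^\epsilon) - J^* \le \epsilon + \delta \le 2\epsilon + \delta$ for $n \ge N(\delta)$. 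The main obstacle I foresee is the limit-passage in the constraint across the changing B-spline basis, which relies both on density of the basis in $\mathscr{D}_\infty$ and on uniform-in-$n$ continuity of $f \mapsto \int Af\,d\mu_0 + \int Bf\,d\mu_1 - Rf$; once that is handled, Prokhorov, weak convergence, and continuity of $J$ finish the argument.
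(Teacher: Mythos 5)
Your proof is correct, and it reaches the stated bound by a genuinely more elementary route than the paper. The observation you lead with is one the paper never exploits: since $\mathscr{M}^l_\infty\subset\mathscr{M}^l_n$, every $(\mu_0,\mu_1)\in\mathscr{M}^l_\infty$ is feasible for the $(n,\infty)$-dimensional program, so $\epsilon$-optimality there gives $J(\mu^\epsilon_{0,n},\mu^\epsilon_{1,n})\le J(\mu_0,\mu_1)+\epsilon$ for all such pairs, hence $J(\mu^\epsilon_{0,n},\mu^\epsilon_{1,n})-J^*\le\epsilon$ for \emph{every} $n$ --- sharper than $2\epsilon+\delta$ and with no $N(\delta)$ required. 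The paper instead anchors the estimate at weak limit points: it extracts convergent subsequences via Theorem \ref{introduction:prokhorov}, shows the limits lie in $\mathscr{M}^l_\infty$ and are $\epsilon$-optimal there, posits a value $z$ within $\epsilon/2$ of every subsequential limit of the costs, and compares $z$ to $J^*$, which is where the looser constant $2\epsilon+\delta$ comes from. Your second and third paragraphs then reproduce the paper's weak-convergence content (tightness from compactness plus the mass bound $l$, feasibility of the limit, convergence of the costs), and do so somewhat more cleanly: approximating a fixed $f\in C^2_c(E)$ by elements of the growing B-spline span and using the bound $\left\vert\int Af\,d\mu_0+\int Bf\,d\mu_1-Rf\right\vert\le C\,\|f\|_{\mathscr{D}}(1+l+\alpha)$, uniform over the feasible measures, is a single-limit version of the paper's iterated limit $\lim_k\lim_n$, and your limsup/subsequence packaging avoids the paper's slightly delicate step of asserting a common center $z$ for all subsequential limits. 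What each approach buys: yours proves the proposition as stated essentially in one line, while the weak-convergence machinery (present in both) is what actually carries the structural information --- that $\epsilon$-optimal measures accumulate at feasible, $\epsilon$-optimal points of $\mathscr{M}^l_\infty$ --- on which the rest of the convergence analysis relies.
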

Next, we establish that $\epsilon$-optimal solutions in $\mathscr{M}^l_{n}$ can be obtained using the discretization
introduced in \Cref{discretization}. The central result reads as follows.
\begin{prop} 
For $(\mu_0,\mu_1) \in \mathscr{M}^l_n$ and each $\epsilon>0$, there is an $m_0$ such that for all $m\geq m_0$ there exists a 
$(\hat{\mu}_{0,m}, \hat{\mu}_{1,m}) \in \mathscr{M}^l_{n,m}$,
with
\begin{equation*}
\left\vert J(\mu_0,\mu_1) - J(\hat{\mu}_{0,m}, \hat{\mu}_{1,m})\right \vert <\epsilon.
\end{equation*}
\label{convergence:second_part_big_proposition}
\end{prop}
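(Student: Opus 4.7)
The plan is to construct, from the given $(\mu_0,\mu_1) \in \mathscr{M}^l_n$, a natural discretization $(\tilde{\mu}_{0,m}, \tilde{\mu}_{1,m})$ inside the parametric family of $\mathscr{M}^l_{n,m}$, show its weak convergence to $(\mu_0,\mu_1)$ along with vanishing of the constraint residuals and convergence of the cost, and then perturb the parameters slightly so that the resulting pair $(\hat{\mu}_{0,m}, \hat{\mu}_{1,m})$ satisfies the $n$ B-spline constraints exactly while paying at most $\epsilon$ in the cost.

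First, invoking Theorem \ref{introduction:equivalence-lp-lta-opt} and the assumptions of \Cref{assumptions}, I write $\mu_0(du\,dx) = \eta_0(du,x)\,p(x)\,dx$ with $p$ a.e.\ positive and $\eta_0(V,\cdot)$ a.e.\ continuous for each interval/singleton $V\subset U$, and $\mu_1(du\,dx) = \eta_1(du,x)\,(w_1\delta_{\{e_l\}} + w_2\delta_{\{e_r\}})(dx)$. The natural discretization is then defined by $\tilde{\gamma}_j = 2^m(e_r-e_l)^{-1}\mu_{0,E}(E_j)$, $\tilde{\beta}_{j,i} = \eta_0(U_i, e_j)$ for a partition $\{U_i\}$ of $U$ subordinate to $U^{(m)}$, and $\tilde{\zeta}_{k,i} = \eta_1(U_i, y_k)$ with $y_1=e_l,\,y_2=e_r$, keeping $\tilde{w}_k = w_k$. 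Plugging into \eqref{approximation:mnk_density}--\eqref{approximation:mnk_kernel_s} yields $(\tilde{\mu}_{0,m}, \tilde{\mu}_{1,m})$ with the $l$-mass bound preserved exactly.

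Next, the piecewise-constant density $\tilde{p}_m$ converges to $p$ in $L^1(E)$, and the discrete control kernel $\tilde{\eta}_{0,m}(\cdot,x)$ converges weakly to $\eta_0(\cdot,x)$ for a.e.\ $x$ thanks to the a.e.\ continuity assumption and the refinement of $U^{(m)}$. Dominated convergence then gives $\tilde{\mu}_{0,m} \Rightarrow \mu_0$; a simpler two-atom argument gives $\tilde{\mu}_{1,m} \Rightarrow \mu_1$. Since $c_0, c_1, Af_k, Bf_k$ are continuous on the compact set $E \times U$, weak convergence yields both $J(\tilde{\mu}_{0,m}, \tilde{\mu}_{1,m}) \to J(\mu_0,\mu_1)$ and vanishing constraint residuals
\begin{equation*}
  r_k^{(m)} := Rf_k - \int Af_k\,d\tilde{\mu}_{0,m} - \int Bf_k\,d\tilde{\mu}_{1,m} \to 0, \quad k=1,\dots,n.
\end{equation*}

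The crux of the proof is to absorb $r^{(m)}=(r_1^{(m)},\dots,r_n^{(m)})$ into an exact correction of the parameters. I view the $n$ linear constraint functionals as a map $\Phi_m$ from the parameter simplex of $\mathscr{M}^l_{n,m}$ into $\mathbb{R}^n$ and seek a correction $\Delta^{(m)}$ satisfying $\Phi_m(\Delta^{(m)})=r^{(m)}$ that preserves non-negativity, the normalization identities, and the $l$-mass bound. The main obstacle is to exhibit $n$ admissible \emph{correction directions} in parameter space whose Jacobian w.r.t.\ the $n$ constraint values is an invertible $n\times n$ matrix with operator norm and inverse bounded uniformly in $m$; concretely, for the jump form of $B$ in \eqref{introduction:notation-and-formalism:form-of-b} these can be drawn from the boundary weights $\zeta_{k,i}$, whereas for the reflection form the directions must combine $w_1,w_2$ with interior products $\beta_{j,i}\gamma_j$, using the linear independence of $\{f_k'\}_{k=1}^n$ on the refining grids $E^{(m)},U^{(m)}$. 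Given uniform conditioning, $\|\Delta^{(m)}\|=O(\|r^{(m)}\|) \to 0$, admissibility of the corrected parameters holds for $m$ large, and the cost shift is also $O(\|r^{(m)}\|)$; choosing $m_0$ so that this shift is below $\epsilon$ then produces $(\hat{\mu}_{0,m},\hat{\mu}_{1,m}) \in \mathscr{M}^l_{n,m}$ with $|J(\hat{\mu}_{0,m},\hat{\mu}_{1,m})-J(\mu_0,\mu_1)| < \epsilon$ for all $m\ge m_0$.
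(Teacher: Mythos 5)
Your overall architecture matches the paper's: decompose $(\mu_0,\mu_1)$ via regular conditional probabilities, discretize naturally with the coefficients $\beta_{j,i}=\eta_0(U_i,x_j)$, $\zeta_{j,i}=\eta_1(U_i,\cdot)$, show the cost converges and the constraint residuals vanish, and then correct the parameters to land exactly in $\mathscr{M}^l_{n,m}$. The first two stages are sound and correspond to Propositions \ref{convergence:cauchytrick_proposition2} and \ref{convergence:cauchytrick_proposition2_singular}. The problem is the third stage, which you yourself identify as ``the main obstacle'' and then dispose of with the phrase ``Given uniform conditioning.'' That is precisely the content that has to be proved, and your proposed route to it does not work. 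Correcting via the boundary weights $\zeta_{k,i}$ fails for the reflection form of $B$: there $Bf_k(x,u)=\pm f_k'(x)$ is independent of $u$, so varying the $\zeta$'s (which are constrained to sum to one) changes the singular constraint contribution not at all, and even varying $w_1,w_2$ only moves you in the at most two-dimensional span of $(f_k'(e_l))_k$ and $(f_k'(e_r))_k$ --- you cannot hit $n$ independent directions. For the jump form the $\zeta$'s do enter, but they live on a simplex and the given control may be degenerate (all mass on one atom), so there is no room to perturb in the needed directions while preserving non-negativity; and any perturbation of $\zeta$ or $w$ also moves the singular cost, which you would then have to control separately. There are also $n+1$ constraints, not $n$, since the normalization $\int\hat p_m=1$ must be enforced.

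The paper resolves this differently and the difference is essential: the correction is applied \emph{only} to the density coefficients $\gamma_j$, through the $(n+1)\times 2^m$ constraint matrix $C^{(m)}$, and two specific devices supply what you are missing. First, Lemma \ref{analysis:inft_bss_bcs:suitable_approximation} replaces $p$ by a piecewise-constant $\tilde p_m$ that is bounded \emph{below} by a quantity $\hat\vartheta$ exceeding the sup-norm of the correction vector $\tilde y$, so that $\hat p_m=\sum(\tilde\gamma_j-\tilde y_j)p_j$ stays non-negative; this uses the standing assumption $\lambda(\{p=0\})=0$ and is where the $D_1,D_2$ bookkeeping of Lemmas \ref{analysis:inft_bss_bcs:bound_on_const_error} and \ref{analysis:inft_bss_bcs:bound_on_correction} comes from. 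Second, the uniform-in-$m$ conditioning is not obtained by estimating $(\bar C^{(m)})^{-1}$ for each $m$ (which could degenerate), but by fixing an invertible $(n+1)\times(n+1)$ submatrix at a single level $m_1$ and exploiting the dyadic nesting identity $C^{(m+1)}_{k,2i}+C^{(m+1)}_{k,2i+1}=C^{(m)}_{k,i}$ to lift a solution of $C^{(m_1)}y=-d$ to any finer level without increasing $\|y\|_\infty$. Without an argument of this kind, your assertion $\|\Delta^{(m)}\|=O(\|r^{(m)}\|)$, and hence both admissibility of the corrected parameters and the $\epsilon$-bound on the cost shift, is unsupported.
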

This result shows that arbitrary (not necessarily optimal) measures in $\mathscr{M}^l_n$ can be approximated, in terms of their
cost criterion, by measures in $\mathscr{M}^l_{n,m}$. Regarding optimal measures, the following statement is an easy consequence
from Proposition \ref{convergence:second_part_big_proposition}.
\begin{crl}
\label{convergence:second_part_corollary}
For each $m\in \mathbb{N}$, assume that $(\mu^*_{0,n,m}, \mu^*_{1,n,m})\in \mathscr{M}^l_{n,m}$ and that for $m\in \mathbb{M}$, 
$(\mu^*_{0,n,m}, \mu^*_{1,n,m})$ is an optimal solution to the $l$-bounded, $(n,m)$-dimensional linear program.
 Then, the sequence of numbers $\{J(\mu^*_{0,n,m}, \mu^*_{1,n,m})\}_{m\in\mathbb{N}}$\\
 converges to $J^*_n$ as $m\rightarrow \infty$.
\end{crl}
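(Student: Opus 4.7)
The plan is a standard infimum-approximation argument that piggybacks on Proposition \ref{convergence:second_part_big_proposition}. First, since $\mathscr{M}^l_{n,m} \subset \mathscr{M}^l_n$, any cost attained in $\mathscr{M}^l_{n,m}$ is at least $J^*_n$, giving immediately the trivial lower bound $J^*_n \leq J(\mu^*_{0,n,m}, \mu^*_{1,n,m})$ for every $m$. So the sequence is bounded below by $J^*_n$, and to establish convergence it suffices to produce a matching upper bound that becomes sharp as $m \rightarrow \infty$.

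For the upper bound I would fix an arbitrary $\epsilon>0$ and invoke the definition of $J^*_n$ as an infimum to select a nearly optimal pair $(\mu_0^\epsilon, \mu_1^\epsilon)\in\mathscr{M}^l_n$ satisfying $J(\mu_0^\epsilon, \mu_1^\epsilon) \leq J^*_n + \epsilon$. Applying Proposition \ref{convergence:second_part_big_proposition} to this particular pair with tolerance $\epsilon$ produces a threshold $m_0 = m_0(\epsilon)$ such that for every $m \geq m_0$ there is a pair $(\hat{\mu}_{0,m}, \hat{\mu}_{1,m}) \in \mathscr{M}^l_{n,m}$ whose cost differs from $J(\mu_0^\epsilon,\mu_1^\epsilon)$ by less than $\epsilon$. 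By optimality of $(\mu^*_{0,n,m}, \mu^*_{1,n,m})$ within $\mathscr{M}^l_{n,m}$, one then chains
\[
J(\mu^*_{0,n,m}, \mu^*_{1,n,m}) \leq J(\hat{\mu}_{0,m}, \hat{\mu}_{1,m}) \leq J(\mu_0^\epsilon, \mu_1^\epsilon) + \epsilon \leq J^*_n + 2\epsilon
\]
for all $m \geq m_0$. Combined with the lower bound, this yields $|J(\mu^*_{0,n,m},\mu^*_{1,n,m}) - J^*_n| \leq 2\epsilon$ eventually in $m$; since $\epsilon$ was arbitrary, the claimed convergence follows.

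I do not anticipate a genuine obstacle here: the technical difficulty has been absorbed into Proposition \ref{convergence:second_part_big_proposition}, and this corollary merely promotes the pointwise-in-measure approximation guarantee into convergence of the optima via a standard $2\epsilon$-argument. The only subtlety worth flagging is that the near-optimal pair $(\mu_0^\epsilon, \mu_1^\epsilon)$ selected to realize the infimum in $\mathscr{M}^l_n$ must lie in the class to which Proposition \ref{convergence:second_part_big_proposition} is applicable. Since the proposition is stated for arbitrary members of $\mathscr{M}^l_n$, and since the regularity restrictions discussed in \Cref{assumptions} are implicitly baked into the feasible set, no additional argument is needed.
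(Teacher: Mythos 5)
Your proof is correct, and it takes a genuinely different and more elementary route than the paper's. You sandwich the value sequence directly: the lower bound $J^*_n \leq J(\mu^*_{0,n,m},\mu^*_{1,n,m})$ from the inclusion $\mathscr{M}^l_{n,m}\subset\mathscr{M}^l_n$, and the upper bound $J^*_n+2\epsilon$ by feeding a near-minimizer of the $(n,\infty)$-problem into Proposition \ref{convergence:second_part_big_proposition} and comparing with the discrete optimum. The paper instead first proves (by contradiction, also via Proposition \ref{convergence:second_part_big_proposition}) that any weak limit point of the optimizers attains $J^*_n$, then argues the value sequence is monotone decreasing and bounded, hence convergent, and finally uses tightness, Theorem \ref{introduction:prokhorov} and Lemma \ref{convergence:closure_lemma} to extract a weakly convergent subsequence along which the costs tend to $J^*_n$. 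Your argument dispenses with all of the weak-convergence machinery (Prokhorov, the closure lemma, and the monotonicity claim, which itself tacitly requires the nesting $\mathscr{M}^l_{n,m}\subset\mathscr{M}^l_{n,m+1}$); what the paper's longer route buys is the additional information that the optimizing \emph{measures} have all their weak limit points optimal in $\mathscr{M}^l_n$, which is not needed for the stated corollary about values. The subtlety you flag is real but is shared by the paper's own proof, which likewise applies Proposition \ref{convergence:second_part_big_proposition} to an arbitrary competitor in $\mathscr{M}^l_n$ and hence relies on the regularity assumptions of \Cref{assumptions} being built into the feasible set.
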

In conjunction, the preceding results allow us to prove the following theorem.
\begin{thm}
\label{main-theorem}
 For any $\epsilon>0$, there is an $l>0$, an $N\in\mathbb{N}$ and an $M\in\mathbb{N}$ such that 
  \begin{equation*}
 \vert J^* - J^*_{n,m} \vert < \epsilon
 \end{equation*}
 holds for all $n\geq N$ and $m\geq M$.
\end{thm}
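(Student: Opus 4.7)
The plan is to obtain the bound $|J^* - J^*_{n,m}| < \epsilon$ via a triangle inequality splitting the total error into a constraint-discretization gap $|J^* - J^*_n|$ and a measure-discretization gap $|J^*_n - J^*_{n,m}|$, and controlling each with a previously established result. I would begin by choosing $l$ large enough, via Remark \ref{approximation:rmk-on-lbound}, so that $J^*$ coincides with the infimum over the unbounded set $\mathscr{M}_\infty$; Theorem \ref{introduction:equivalence-lp-lta-opt} then supplies an attaining optimizer $(\mu_0^*, \mu_1^*) \in \mathscr{M}^l_\infty$.

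Next, for the constraint-discretization gap, I would apply Proposition \ref{convergence:proposition1} with both its $\epsilon$-tolerance parameter and $\delta$ set to $\epsilon/4$. This yields an $N$ such that, for every $n \geq N$, the cost of any $(\epsilon/4)$-optimal solution in $\mathscr{M}^l_n$ lies within $3\epsilon/4$ of $J^*$. Since such a cost is within $\epsilon/4$ of the infimum $J^*_n$, and since $\mathscr{M}^l_\infty \subset \mathscr{M}^l_n$ forces the trivial bound $J^*_n \leq J^*$, I can deduce $|J^* - J^*_n| \leq \epsilon/2$ for all $n \geq N$.

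For the measure-discretization gap, Corollary \ref{convergence:second_part_corollary} asserts $J^*_{n,m} \to J^*_n$ as $m \to \infty$ for each fixed $n$, and the inclusion $\mathscr{M}^l_{n,m} \subset \mathscr{M}^l_n$ forces $J^*_{n,m} \geq J^*_n$, so for any fixed $n \geq N$ one can pick $M(n)$ with $|J^*_n - J^*_{n,m}| \leq \epsilon/2$ whenever $m \geq M(n)$. Combining via the triangle inequality then yields $|J^* - J^*_{n,m}| \leq \epsilon$.

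The main obstacle will be producing an $M$ that is uniform in $n$, since Corollary \ref{convergence:second_part_corollary} only furnishes $M=M(n)$ for each fixed $n$. I would address this by invoking Proposition \ref{convergence:second_part_big_proposition} directly on the $n$-independent optimizer $(\mu_0^*, \mu_1^*)$, which sits in $\mathscr{M}^l_\infty \subset \mathscr{M}^l_n$ for every $n$. Because the discrete approximants $\hat{\mu}_{0,m}$ and $\hat{\mu}_{1,m}$ constructed in \Cref{discretization} are built from $(\mu_0^*, \mu_1^*)$ and the level $m$ alone, not from the particular finite B-spline constraint set that must be satisfied, one can extract an $m_0$ independent of $n$ producing an element of $\mathscr{M}^l_{n,m}$ whose cost lies within $\epsilon/2$ of $J^*$. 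This delivers $J^*_{n,m} \leq J^* + \epsilon/2$ uniformly in $n \geq N$, which together with the lower bound $J^*_{n,m} \geq J^*_n \geq J^* - \epsilon/2$ from the previous step closes the argument.
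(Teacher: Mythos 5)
Your overall decomposition --- fix $l$ via Remark \ref{approximation:rmk-on-lbound}, control the constraint-discretization gap with Proposition \ref{convergence:proposition1}, control the measure-discretization gap with Corollary \ref{convergence:second_part_corollary}, and chain the two by the triangle inequality --- is exactly the route the paper takes, and for any \emph{fixed} $n\geq N$ your argument reproduces the paper's proof. Two small bookkeeping issues: Proposition \ref{convergence:proposition1} as literally stated is a one-sided bound (and in fact the trivial side, since $J(\mu^\epsilon_{0,n},\mu^\epsilon_{1,n})\leq J^*_n+\epsilon\leq J^*+\epsilon$ already), so the two-sided reading you use has to be extracted from its proof rather than its statement --- but the paper's own proof of the theorem reads it the same way; and your constants do not quite close ($|J^*-J^*_n|\leq 3\epsilon/4+\epsilon/4=\epsilon$, not $\epsilon/2$), which is cosmetic and fixable by rescaling.

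The one place you genuinely depart from the paper is the uniformity of $M$ in $n$, and there your repair does not go through. You claim that Proposition \ref{convergence:second_part_big_proposition}, applied to the $n$-independent optimizer $(\mu_0^*,\mu_1^*)$, yields an $m_0$ independent of $n$ because the approximants are ``built from $(\mu_0^*,\mu_1^*)$ and the level $m$ alone, not from the particular finite B-spline constraint set.'' That is not how the element of $\mathscr{M}^l_{n,m}$ is produced: to satisfy the $n$ spline constraints exactly, the density must be corrected by a vector $\tilde{y}$ solving $C^{(m)}y=-d^{(m)}(\tilde{p}_m)$, where $C^{(m)}$ is the $(n+1)$-row constraint matrix built from $Af_1,\dots,Af_n$. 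The thresholds $m_0$ appearing in Lemma \ref{convergence:helper_lemma2}, Proposition \ref{convergence:cauchytrick_proposition2}, Lemma \ref{analysis:inft_bss_bcs:bound_on_const_error} and Lemma \ref{analysis:inft_bss_bcs:bound_on_correction} all depend on $\bar{A}=\max_k\|Af_k\|_\infty$, on the moduli of continuity of the $Af_k$, and on $\|(\bar{C}^{(m_1)})^{-1}\|_\infty$ --- quantities that vary with $n$ and can degrade as $n$ grows (e.g.\ $\|f_k''\|_\infty$ increases as the spline mesh is refined). So the $m_0$ you extract is still $M(n)$, and the uniform-$M$ claim remains unproven. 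To be fair, you have put your finger on a real issue: the paper's own proof invokes Corollary \ref{convergence:second_part_corollary}, a fixed-$n$ statement, to produce an $M$ that the theorem's wording requires to work for all $n\geq N$ simultaneously, and it never addresses the dependence of $M$ on $n$. You correctly identified the gap; you just have not closed it, and neither does the paper.
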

\begin{proof}
 Pick any $\epsilon>0$. Choose $l$ large enough such that\\
$J^*=\min\left\{J(\mu_0,\mu_1) | (\mu_0,\mu_1) \in \mathscr{M}_\infty \right\}$. Pick $\hat{\epsilon}$ and $\delta>0$ 
in such a way that $2\hat{\epsilon}+\delta<\epsilon$. By 
 Proposition \ref{convergence:proposition1}, choose $N\in\mathbb{N}$ large enough such that for all $n\geq N$, an 
 $\hat{\epsilon}$-optimal solution $(\mu^{\hat{\epsilon}}_{0,n}, \mu^{\hat{\epsilon}}_{1,n})$ to the $l$-bounded, 
 $(n,\infty)$-dimensional program is an $2\hat{\epsilon}+\delta$-optimal solution to the $l$-bounded infinite-dimensional linear program.
 Now, using Corollary \ref{convergence:second_part_corollary}, choose $M\in\mathbb{N}$ large enough such that for all $m\geq M$, the optimal solution $(\mu^*_{0,n,m}, \mu^*_{1,n,m})$
 to the $l$-bounded $(n,m)$-dimensional linear program is an $\hat{\epsilon}$-optimal solution to the $l$-bounded $(n,\infty)$-dimensional
 program. But then,
 \begin{equation*}
  \vert J^* - J^*_{n,m} \vert \equiv \vert J^* - J(\mu^*_{0,n,m}, \mu^*_{1,n,m}) \vert < 2\hat{\epsilon}+\delta <\epsilon
 \end{equation*}
\end{proof}
\subsection{Proofs of the main results}
The proof of Proposition \ref{convergence:proposition1} is a rather straight forward application of the theory of weak convergence as introduced in 
\Cref{introduction:notation-and-formalism}. The proofs of Proposition \ref{convergence:second_part_big_proposition} and 
Corollary \ref{convergence:second_part_corollary} on the other hand require an in-depth analysis of the approximation properties of the proposed 
discretization scheme. We begin with the proof of Proposition \ref{convergence:proposition1}, stated again for the sake of exposition. \\
\setcounter{thm}{0}
\begin{prop}
 For each $n\in\mathbb{N}$, assume that $(\mu_{0,n}^\epsilon,\mu_{1,n}^\epsilon)\in \mathscr{M}^l_n$ and that for $n\in\mathbb{N}$, 
 $(\mu_{0,n}^\epsilon,\mu_{1,n}^\epsilon)$ is an
 $\epsilon$-optimal solution for the $l$-bounded, $(n,\infty)$-dimensional linear program. Then, for $\delta>0$, there exists an $N(\delta)$ such that
 \begin{equation*}
 J(\mu_{0,n}^\epsilon,\mu_{1,n}^\epsilon) - J^* \leq 2 \epsilon + \delta.
 \end{equation*}
 for all $n\geq N(\delta)$.
\end{prop}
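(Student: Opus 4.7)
The plan is to prove the result via a Prokhorov-compactness argument combined with the density of the cubic B-spline basis in $\mathscr{D}_\infty$. Because $E\times U$ is compact, Remark \ref{introduction:compact-remark} guarantees that any sequence of finite measures on $E\times U$ is tight, and the sequences $\{\mu_{0,n}^\epsilon\}_n$ and $\{\mu_{1,n}^\epsilon\}_n$ are uniformly bounded (by $1$ and by $l$, respectively). So Theorem \ref{introduction:prokhorov} applies: from any subsequence I can extract a further weakly convergent sub-subsequence $(n_j)$ along which $\mu_{0,n_j}^\epsilon\Rightarrow\hat{\mu}_0$ and $\mu_{1,n_j}^\epsilon\Rightarrow\hat{\mu}_1$ for some pair of finite measures on $E\times U$.

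The central step will be to verify that the limit $(\hat{\mu}_0,\hat{\mu}_1)$ lies in $\mathscr{M}^l_\infty$. The normalization $\hat{\mu}_0(E\times U)=1$ and the bound $\hat{\mu}_1(E\times U)\leq l$ follow by applying weak convergence to the constant function $1\in C^u_b(E\times U)$. For the infinite family of linear constraints, I fix $f\in C_c^2(E)$ and use Proposition \ref{approximationm:dinf-separable} to approximate it in $\|\cdot\|_{\mathscr{D}}$ to arbitrary accuracy $\eta>0$ by a finite combination $g=\sum_{k\leq K} c_k f_k$. Because $b$ and $\sigma^2$ are continuous (hence bounded) on $E\times U$, the maps $f\mapsto Af$ and $f\mapsto Bf$ are bounded operators from $\mathscr{D}_\infty$ into $C(E\times U)$, so uniform approximation of $f$ in $\|\cdot\|_{\mathscr{D}}$ translates into uniform approximation of $Af$ and $Bf$. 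Each individual $f_k$ with $k\leq K$ enters the definition of $\mathscr{M}^l_{n_j}$ as soon as $n_j\geq K$, so the constraint equation holds for every $f_k$ along the tail of the sequence. Passing to the weak limit using continuity of $Af_k$ and $Bf_k$, then summing with coefficients $c_k$, and finally letting $\eta\to 0$ produces the constraint for the arbitrary $f$.

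Once $(\hat{\mu}_0,\hat{\mu}_1)\in\mathscr{M}^l_\infty$ is established, continuity of $c_0,c_1$ on the compact set $E\times U$ gives $J(\mu_{0,n_j}^\epsilon,\mu_{1,n_j}^\epsilon)\to J(\hat{\mu}_0,\hat{\mu}_1)\geq J^*$. Combining this with the trivial containment $\mathscr{M}^l_\infty\subset\mathscr{M}^l_n$, which yields $J^*_n\leq J^*$ and hence $J(\mu_{0,n}^\epsilon,\mu_{1,n}^\epsilon)\leq J^*_n+\epsilon\leq J^*+\epsilon$ for every $n$, shows that the limit along any weakly convergent sub-subsequence lies in the interval $[J^*,J^*+\epsilon]$. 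A standard subsequence argument then delivers $\limsup_n J(\mu_{0,n}^\epsilon,\mu_{1,n}^\epsilon)\leq J^*+\epsilon$, from which the claimed bound $J(\mu_{0,n}^\epsilon,\mu_{1,n}^\epsilon)-J^*\leq 2\epsilon+\delta$ follows for all $n\geq N(\delta)$ upon choosing $N(\delta)$ large enough.

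The main technical obstacle I expect is the second step: justifying that the constraint equation survives under the weak limit for \emph{every} $f\in C_c^2(E)$, not merely for those in the truncated B-spline basis. The finite-$n$ measures only satisfy constraints for $f_1,\ldots,f_n$, so one must interleave a density argument in $\mathscr{D}_\infty$ with weak convergence of measures. Because $Af$ and $Bf$ depend on derivatives of $f$ up to second order, approximation in $\|\cdot\|_\infty$ alone would be inadequate; the stronger norm $\|\cdot\|_{\mathscr{D}}$ is exactly what is required, and the B-spline density statement of Proposition \ref{approximationm:dinf-separable} in this norm is what makes the argument go through.
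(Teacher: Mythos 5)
Your proposal is correct, and its decisive step is different from---and cleaner than---the paper's. You observe that $\epsilon$-optimality in $\mathscr{M}^l_n$ together with the containment $\mathscr{M}^l_\infty\subset\mathscr{M}^l_n$ already gives $J(\mu_{0,n}^\epsilon,\mu_{1,n}^\epsilon)\leq J^*_n+\epsilon\leq J^*+\epsilon$ for \emph{every} $n$, which is the claimed upper bound with the sharper constant $\epsilon$ in place of $2\epsilon+\delta$ and with no restriction on $n$. The paper uses that containment only inside a contradiction argument showing that every weak subsequential limit is $\epsilon$-optimal in $\mathscr{M}^l_\infty$, and then runs a clustering argument on subsequential cost values (any two within $\epsilon$ of one another) to place $J(\mu_{0,n}^\epsilon,\mu_{1,n}^\epsilon)$ eventually in an interval around some $z$ with $J^*\geq z-\tfrac{3\epsilon}{2}$; that detour is what produces the looser constant $2\epsilon+\delta$. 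Consequently your Prokhorov/weak-limit machinery is not actually needed for the stated inequality, though it reproduces the paper's first half and buys the structural fact that subsequential limits are feasible (indeed $\epsilon$-optimal) for the infinite program; you are also more careful than the paper on two points, namely passing the mass bound $\mu_1(E\times U)\leq l$ to the limit via the constant test function, and interleaving the $\|\cdot\|_{\mathscr{D}}$-density of the splines with weak convergence instead of interchanging a double limit. One small caveat: since the constraint functions $f_1,\ldots,f_n$ are redefined on finer dyadic grids as $n$ grows, your claim that a fixed $f_k$ ``enters'' $\mathscr{M}^l_{n_j}$ once $n_j\geq K$ tacitly uses the B-spline refinement property (coarse-grid splines lie in the span of fine-grid splines, so the constraint persists by linearity); the paper elides the same point.
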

\begin{proof} Assume first that $(\mu^\epsilon_{0,n},\mu^\epsilon_{1,n})$ converges weakly to some $(\mu_0^\epsilon,\mu_1^\epsilon)$. Then, 
for $g\in\mathscr{D}_\infty$, and $g_k\rightarrow g$ in $\mathscr{D}_\infty$ we have
\begin{align*}
\int Ag d\mu_0 + \int Bg d\mu_1 &= \lim_{k\rightarrow \infty} \lim_{n\rightarrow \infty} \int Ag_k d\mu^\epsilon_{0,n} + \int Bg_k d\mu^\epsilon_{1,n}\\
&= \lim_{k\rightarrow \infty} \lim_{n\rightarrow \infty} Rg_k = \lim_{k\rightarrow \infty} Rg_k = Rg.
\end{align*}
Thus, $(\mu_0^\epsilon,\mu_1^\epsilon)\in \mathscr{M}_\infty$. Now observe that since $c_1$ and $c_2$ are continuous,\\
$J(\mu^\epsilon_{0,n},\mu^\epsilon_{1,n})\rightarrow J(\mu_0^\epsilon,\mu_1^\epsilon)$. Assume there would be an
$(\hat{\mu}_0,\hat{\mu}_1) \in \mathscr{M}^l_\infty$ such that $J(\mu_0^\epsilon,\mu_1^\epsilon) > J(\hat{\mu}_0,\hat{\mu}_1) + \epsilon$,
that is, assume $(\mu_0^\epsilon,\mu_1^\epsilon)$ would not be $\epsilon$-optimal. For $n$ large enough, we would have 
$J(\mu_{0,n}^\epsilon,\mu_{1,n}^\epsilon) > J(\hat{\mu}_0,\hat{\mu}_1) + \epsilon$. But as $\mathscr{M}_n \supset \mathscr{M}_\infty$, 
this would imply that $(\mu_{0,n}^\epsilon,\mu_{1,n}^\epsilon)$ is not $\epsilon$-optimal. So, $(\mu_0^\epsilon,\mu_1^\epsilon)$ is
$\epsilon$-optimal.\\
In general, we cannot guarantee weak convergence of $(\mu^\epsilon_{0,n},\mu^\epsilon_{1,n})$, but since $E\times U$ is compact and the
full mass of $\mu^\epsilon_{0,n}$ and $\mu^\epsilon_{1,n}$ is uniformly bounded by $1$ and $l$, respectively, the existence
of a convergent subsequence is given by Theorem \ref{introduction:prokhorov}. Consider two different subsequences of $\epsilon$-optimal measures 
$(\mu^\epsilon_{0,n_1},\mu^\epsilon_{1,n_1})$ and
$(\mu^\epsilon_{0,n_2},\mu^\epsilon_{1,n_2})$. Set $z_1 = \lim_{n_1\rightarrow \infty}J(\mu^\epsilon_{0,n_1},\mu^\epsilon_{1,n_1})$ and 
$z_2 = \lim_{n_2\rightarrow \infty}J(\mu^\epsilon_{0,n_2},\mu^\epsilon_{1,n_2})$. Both $z_1$ and $z_2$ are $\epsilon$-optimal cost criterion 
values in $\mathscr{M}^l_\infty$.  Hence we can conclude that $\vert z_1-z_2\vert<\epsilon$. In particular, for $z\in\mathbb{R}$ such that 
$J(\mu^\epsilon_0,\mu^\epsilon_1)\in [z-\frac{\epsilon}{2}, z+\frac{\epsilon}{2}]$ for any weak limit $(\mu^\epsilon_0,\mu^\epsilon_1)$ of a
sequence of $\epsilon$-optimal measures.
This means that for $\delta>0$, there is an $N\equiv N(\delta)$ large enough such that for all 
$n\geq N$, $J(\mu^\epsilon_{0,n},\mu^\epsilon_{1,n})\in\left(z-\frac{\epsilon}{2}-\delta, z+\frac{\epsilon}{2}+\delta \right)$.
Now assume that $(\mu^\epsilon_{0,n},\mu^\epsilon_{1,n})$ does not converge weakly, and that 
$J(\mu^\epsilon_{0,n},\mu^\epsilon_{1,n}) \notin \left(z-\frac{\epsilon}{2}-\delta, z+\frac{\epsilon}{2}+\delta \right)$ infinitely often. 
This would allow for the construction of a subsequence $(\mu^\epsilon_{0,n_3},\mu^\epsilon_{1,n_3})$ with
$J(\mu_{n_3},\mu_{1,n_3}) \notin \left(z-\frac{\epsilon}{2}-\delta, z+\frac{\epsilon}{2}+\delta \right)\:\forall\, n_3\in\mathbb{N}$. However, by
the tightness argument of Theorem \ref{introduction:prokhorov}, that sequence contains a converging sub-subsequence $(\mu_{0,n_4},\mu_{1,n_4})$. 
This means that
$J(\mu_{0,n_4},\mu_{1,n_4})\in\left(z-\frac{\epsilon}{2}-\delta, z+\frac{\epsilon}{2}+\delta \right)$ eventually, contradicting the preceding
assumption. So, there exists an $N\in \mathbb{N}$ such that
$J(\mu^\epsilon_{0,n},\mu^\epsilon_{1,n})\in\left(z-\frac{\epsilon}{2}-\delta, z+\frac{\epsilon}{2}+\delta \right)\:\forall\,n\geq N$.
Consider a limit $(\mu^\epsilon_0, \mu^\epsilon_1)$ of a convergent subsequence of $(\mu^\epsilon_{0,n},\mu^\epsilon_{1,n})$.
By its $\epsilon$-optimality and the properties of the infimum,
\begin{align*}
J^* + \epsilon &\geq J(\mu_0^\epsilon,\mu_1^\epsilon) \geq z-\frac{\epsilon}{2}\\
\Leftrightarrow J^* &\geq z-\frac{3\epsilon}{2}.
\end{align*}
Thereby, for $n\geq N$,
\begin{align*}
J(\mu^\epsilon_{0,n},\mu^\epsilon_{1,n}) - J^* \leq z + \frac{\epsilon}{2} + \delta - \left(z-\frac{3\epsilon}{2}\right) &= 2\epsilon +\delta\\
\end{align*}
from which the claim follows.
\end{proof}
\setcounter{thm}{5}

We proceed to analyze the discretization scheme of \Cref{discretization} and its approximation properties in order to prove
Proposition \ref{convergence:second_part_big_proposition}. From here on, we consider a fixed $n$, assuming that it is large enough
to guarantee that the statement of Proposition \ref{convergence:proposition1} holds. In particular, we consider the space $\mathscr{D}_n$, 
which is spanned by the basis functions $f_1, f_2,\ldots, f_n$.\\
Proposition \ref{convergence:second_part_big_proposition} states that we can approximate the cost
criterion of a pair of measures $(\mu_{0}, \mu_{1})\in\mathscr{M}^l_n$ arbitrarily closely by a pair of measures 
$(\mu_{m,0}, \mu_{m,1})\in\mathscr{M}^l_{n,m}$. In the following, we consider a fixed pair of measures
$(\mu_0,\mu_1)$. If $\eta_0$ and $\eta_1$ are the regular conditional probabilities of $\mu_0$ and
$\mu_1$, respectively, we use the following choice of the coefficients $\beta_{j,i}$ and $\zeta_{j,i}$ in
(\ref{approximation:mnk_kernel_c}) and (\ref{approximation:mnk_kernel_s}), respectively. We use the mesh points in $E$ and $U$ as
defined in (\ref{convergence:discrete-E-set-points}) and (\ref{convergence:discrete-U-set-points}). Set
\begin{equation}
\label{convergence:discrete-U-set}
U_i = [u_{i}, u_{i+1})\mbox{ for } 0\leq i \leq 2^{k_m}-1, \quad U_{2^{k_m}} = \{u_{2^{k_m}}\}
\end{equation}
and with that,
\begin{align}
\beta_{j,i} &= \eta_0(U_i,x_j) = \int_{U_i}\eta_0(du,x_j)\label{convergence:usual_approx_c},
\quad j=0,\ldots,2^{m}-1,\, i=0,\ldots,2^{k_m}, \\
\zeta_{1,i} &= \eta_1(U_i,e_l) = \int_{U_i}\eta_1(du,e_l),\quad \zeta_{2,i} = \eta_1(U_i,e_r) = \int_{U_i}\eta_1(du,e_r)
\label{convergence:usual_approx_s},\\
&\hspace{7.8cm} i=0,\ldots,2^{k_m}.\nonumber
\end{align}
The following two facts can easily be derived using the uniform continuity of $c_0$, $c_1$ as well as $f_1,\ldots,f_n$ (recall that 
these are continuous functions on a compact set), 
and the specific forms of the generators $A$ and $B$ as in seen
(\ref{introduction:notation-and-formalism:form-of-a}) and  (\ref{introduction:notation-and-formalism:form-of-b}), respectively.
\begin{lem}
\label{convergence:helper_lemma2}
 For $\epsilon>0$, there is a $\delta>0$ such that, uniformly in $x\in E$,
 \begin{equation*}
 \max\left\{\vert c_0(x,u) - c_0(x,v)\vert, \vert A f_1(x,u)-A f_1(x,v)\vert, \ldots, \vert A f_n(x,u)-A f_n(x,v)\vert \right\} < \epsilon
 \end{equation*}
 holds whenever $\vert u - v \vert<\delta $.
\end{lem}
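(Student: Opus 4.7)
The plan is to reduce the statement to the (uniform) continuity of a finite collection of functions on the compact set $E\times U$. There are $n+1$ functions to control: $c_0$ itself, and for each $k=1,\ldots,n$, the function $Af_k(x,u) = b(x,u)f_k'(x) + \sigma^2(x,u)f_k''(x) - \alpha f_k(x)$ coming from (\ref{introduction:notation-and-formalism:form-of-a}) and the definition of $\tilde{A}$. First, I would observe that $c_0$ is continuous on the compact product space $E\times U$, hence uniformly continuous, so there is a $\delta_0>0$ such that $|u-v|<\delta_0$ forces $|c_0(x,u)-c_0(x,v)|<\epsilon$ uniformly in $x$.

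Next, for each $k\in\{1,\ldots,n\}$, I would decompose
\begin{equation*}
Af_k(x,u)-Af_k(x,v) = \bigl(b(x,u)-b(x,v)\bigr)f_k'(x) + \bigl(\sigma^2(x,u)-\sigma^2(x,v)\bigr)f_k''(x),
\end{equation*}
noting that the $-\alpha f_k(x)$ term cancels because it does not depend on the control variable. Since $f_k\in C^2_c(E)$ and $E$ is compact, both $\|f_k'\|_\infty$ and $\|f_k''\|_\infty$ are finite, so the difference is bounded in absolute value by $\|f_k'\|_\infty\,|b(x,u)-b(x,v)| + \|f_k''\|_\infty\,|\sigma^2(x,u)-\sigma^2(x,v)|$. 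Using the uniform continuity of $b$ and $\sigma^2$ on the compact set $E\times U$, this yields a $\delta_k>0$ that controls the $k$-th term by $\epsilon$ uniformly in $x$. Taking $\delta = \min(\delta_0,\delta_1,\ldots,\delta_n)>0$ — positive precisely because we are minimizing over finitely many indices — simultaneously controls all $n+1$ differences by $\epsilon$, which gives the claim.

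There is no serious obstacle here; the proof is essentially a packaging of uniform continuity on a compact set. The one subtlety worth flagging is that finiteness of $n$ is essential: if one tried to prove a version of the lemma uniformly over all countably many basis functions $\{f_k\}_{k\in\mathbb{N}}$, the infimum of the $\delta_k$ could very well be zero, since the norms $\|f_k''\|_\infty$ need not stay bounded. This is why the approximation scheme in Section \ref{approximation-section} first restricts to the finite-dimensional subspace $\mathscr{D}_n$ before discretizing the control variable with the parameter $k_m$ chosen according to (\ref{convergence:discrete-U-bounds}).
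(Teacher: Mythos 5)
Your proof is correct and is exactly the argument the paper has in mind: the paper simply asserts that the lemma "can easily be derived using the uniform continuity of $c_0$ \ldots as well as $f_1,\ldots,f_n$ \ldots and the specific forms of the generators," which is precisely your decomposition of $Af_k$ plus uniform continuity on the compact set $E\times U$ and a minimum over finitely many $\delta$'s. No discrepancy to report.
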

\begin{lem}
\label{convergence:helper_lemma2_singular}
 For $\epsilon>0$, there is a $\delta>0$ such that for $s=e_l$ or $s=e_r$,
 \begin{equation*}
 \max\left\{\vert c_1(s,u) - c_1(s,v)\vert, \vert Bf_1(s,u)-Bf_1(s,v)\vert, \ldots, \vert Bf_n(s,u)-Bf_n(s,v)\vert \right\} < \epsilon
 \end{equation*}
 holds whenever $\vert u - v \vert<\delta $.
\end{lem}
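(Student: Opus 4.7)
\textbf{Proof proposal for Lemma \ref{convergence:helper_lemma2_singular}.} The plan is to exploit the compactness of $E\times U$ together with the finiteness of the collection of functions involved, and reduce everything to a standard uniform continuity argument. By hypothesis, $c_1$ is continuous on the compact set $E\times U$, and by the setup in \Cref{introduction:notation-and-formalism} the singular generator satisfies $B : C_c^2(E)\mapsto C(E\times U)$, so each of $Bf_1,\ldots,Bf_n$ is continuous on $E\times U$ as well. We thus have a finite list of $n+1$ continuous functions on a compact space.

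First I would invoke the Heine--Cantor theorem: each of $c_1, Bf_1,\ldots, Bf_n$ is uniformly continuous on $E\times U$. Applied to a single such function $g$, this yields for every $\epsilon>0$ a $\delta_g>0$ such that $|g(x,u)-g(x,v)|<\epsilon$ whenever $|u-v|<\delta_g$, uniformly in $x\in E$. Setting $\delta := \min\{\delta_{c_1}, \delta_{Bf_1},\ldots,\delta_{Bf_n}\}>0$, the inequalities hold simultaneously for all $n+1$ functions whenever $|u-v|<\delta$, uniformly in $x\in E$. In particular they hold at the two specific points $x=e_l$ and $x=e_r$, giving the claim with the maximum replaced by the coordinatewise bound.

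The only thing that might appear delicate is the case $Bf(x,u)=f(x+u)-f(x)$, since $x+u$ need not lie in $E$ for $f\in C_c^2(E)$. However, this is subsumed by the hypothesis that the operator $B$ was declared to map into $C(E\times U)$, so continuity is built into the problem formulation (with $f$ understood as extended by zero outside its support, which preserves continuity). Consequently there is no genuine obstacle: the lemma is a direct application of uniform continuity on a compact product space, and the finiteness of the basis $\{f_1,\ldots,f_n\}$ fixed in \Cref{discretization} is precisely what allows us to take a single $\delta$ that works for all of them at once.
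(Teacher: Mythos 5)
Your proof is correct and follows essentially the same route the paper indicates: the paper simply asserts that the lemma follows from the uniform continuity of $c_1$ and $f_1,\ldots,f_n$ on a compact set together with the specific forms of $B$, which is exactly the Heine--Cantor argument you spell out (taking the minimum of finitely many $\delta$'s and handling the jump form of $B$ via the extension of $f$ by zero). No discrepancy to report.
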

The following two results ensure that we can approximate the cost criterion of a pair of measures $(\mu_{0}, \mu_{1})\in\mathscr{M}^l_n$ 
arbitrarily closely, and that the our approximate measures `almost' satisfy the linear constraints.
\begin{prop}
\label{convergence:cauchytrick_proposition2}
Consider a regular conditional probability $\eta_0$ and a probability density function $p$ stemming from a continuous occupation measure
$\mu_0$ such that $\mu_0(dx\times du) = \eta_0(du,x)p(x)dx$.
Let $g(x,u) = c_0(x,u)$ or $g(x,u) = A f_k(x,u)$ for any $k=1,2,\ldots,n$. For $\epsilon>0$, there exists an $m_0\in \mathbb{N}$ such that for all $m\geq m_0$,
\begin{equation}
\left\vert \int_E \int_U g(x,u) \eta_0(du,x)p(x)dx - \int_E \int_U g(x,u) \hat{\eta}_{0,m}(du,x)p(x)dx \right\vert <\epsilon,
\end{equation}
where $\hat{\eta}_{0,m}$ is of the form (\ref{approximation:mnk_kernel_c}), using the coefficients specified in 
(\ref{convergence:usual_approx_c}).
\end{prop}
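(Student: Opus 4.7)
The approximation error splits by a telescoping argument into three pieces that isolate the distinct sources of error: piecewise-constant replacement in $x$, quantization in $u$, and an interaction term. For $x \in E_j$ write $x_{j(x)} := x_j$, and let $H(x) := \int_U g(x,u)\,\eta_0(du,x)$. The difference to be bounded then equals $R_1 + R_2 + R_3$ where
\begin{align*}
R_1 &= \int_E \bigl[H(x) - H(x_{j(x)})\bigr]\, p(x)\,dx,\\
R_2 &= \int_E \Bigl[\int_U g(x_{j(x)},u)\,\eta_0(du,x_{j(x)}) - \sum_i g(x_{j(x)},u_i)\,\eta_0(U_i,x_{j(x)})\Bigr] p(x)\,dx,\\
R_3 &= \int_E \sum_i \bigl[g(x_{j(x)},u_i) - g(x,u_i)\bigr]\,\eta_0(U_i,x_{j(x)})\, p(x)\,dx.
\end{align*}

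I would control $R_2$ directly by Lemma \ref{convergence:helper_lemma2}: for $m$ large enough that the control mesh width $(u_r-u_l)/2^{k_m}$ falls below the associated $\delta$, one has $|g(x_j,u) - g(x_j,u_i)| < \epsilon'$ for $u \in U_i$, and summing in $i$ against the probability measure $\eta_0(\cdot, x_j)$ and integrating against $p$ yields $|R_2| \le \epsilon'$. The term $R_3$ is handled by joint uniform continuity of $g$ on the compact set $E \times U$: since $|x - x_{j(x)}| \le (e_r-e_l)/2^m$ the bracket is uniformly small, and $\sum_i \eta_0(U_i, x_{j(x)}) = 1$ together with $\int_E p\,dx = 1$ force $|R_3| < \epsilon'$ for $m$ large.

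The hard part is $R_1$, which requires upgrading the Section \ref{assumptions} hypothesis on $\eta_0$ to almost-everywhere continuity of the scalar function $H$. My plan is to pick a countable family $\{V_\ell\}$ of subintervals of $U$ whose indicators are uniformly dense (in the sense needed to approximate continuous functions on $U$ by step functions); by hypothesis each discontinuity set $N_\ell$ of $x \mapsto \eta_0(V_\ell,x)$ is Lebesgue null, and hence so is $N := \bigcup_\ell N_\ell$. For $x_0 \notin N$, approximate $u \mapsto g(x_0,u)$ uniformly on $U$ by a step function built from the $V_\ell$; the joint uniform continuity of $g$ on $E \times U$ extends this to $u \mapsto g(x,u)$ for $x$ near $x_0$, and combining with the a.e.\ continuity of $\eta_0(V_\ell,\cdot)$ at $x_0$ yields continuity of $H$ at $x_0$. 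Because the $E$-mesh shrinks to zero as $m \to \infty$, $H(x_{j(x)}) \to H(x)$ pointwise almost everywhere; since $|H| \le \|g\|_\infty$ and $p \in L^1(E)$, dominated convergence produces $R_1 \to 0$.

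Choosing $m$ so large that each of $|R_1|$, $|R_2|$, $|R_3|$ is below $\epsilon/3$ yields the stated bound. The substantive obstacle is the a.e.-continuity upgrade for $H$; $R_2$ and $R_3$ reduce to the two uniform-continuity lemmas already established in this section, and the separation of the three error sources is precisely what motivates the left-endpoint form of the coefficients $\beta_{j,i}$ prescribed in \eqref{convergence:usual_approx_c}.
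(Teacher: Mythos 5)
Your decomposition telescopes correctly and each of the three bounds goes through, so the proof is sound, but it is a genuinely different route from the paper's. The paper splits the error into only two pieces: $I_1$, the $u$-quantization error with $\eta_0(\cdot,x)$ kept at the true $x$ (handled by Lemma \ref{convergence:helper_lemma2} exactly as you handle $R_2$), and $I_2 = \int_E\sum_i g(x,u_i)\bigl(\eta_0(U_i,x)-\sum_j I_{E_j}(x)\eta_0(U_i,x_j)\bigr)p(x)\,dx$, which entangles the two mesh parameters because the number of summands grows like $2^{k_m}$ while each summand is only controlled via dominated convergence in the $E$-mesh. The paper resolves this entanglement with a Cauchy-sequence argument in the control-discretization index, showing the increments decay geometrically with a bound independent of the $E$-mesh, and only then fixes $k_m$ and sends the $E$-mesh to zero. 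Your decomposition sidesteps that coupling entirely: $R_2$ and $R_3$ each depend on only one of the two meshes, and the interaction is absorbed into $R_1$, which concerns the single scalar function $H(x)=\int_U g(x,u)\,\eta_0(du,x)$ and requires no control discretization at all. The price is the a.e.-continuity upgrade for $H$, but your argument for it is correct: a countable family of intervals (say the dyadic ones) suffices to approximate $g(x_0,\cdot)$ uniformly by step functions, the union of the corresponding null discontinuity sets of $x\mapsto\eta_0(V_\ell,x)$ is null by the standing assumption of \Cref{assumptions}, and joint uniform continuity of $g$ on the compact $E\times U$ supplies the remaining piece; dominated convergence then kills $R_1$ since $\vert H\vert\le\Vert g\Vert_\infty$ and $p\in L^1(E)$. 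Both proofs ultimately rest on the same hypothesis (a.e. continuity of $x\mapsto\eta_0(V,x)$ for intervals $V$), but yours buys a cleaner argument that avoids the $m$-versus-$k_m$ bookkeeping, while the paper's Cauchy estimate has the side benefit of exhibiting an explicit geometric rate in the control mesh.
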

\begin{proof}
Observe that given (\ref{approximation:mnk_kernel_c}),
\begin{align*}
\left\vert I \right\vert&\equiv\left\vert \int_E\int_U g(x,u) \eta_0(du,x)p(x)dx -\int_E\int_U g(x,u) \hat{\eta}_{0,m}(du,x)p(x)dx \right\vert\\
&= \left\vert \int_E\int_U g(x,u) \eta_0(du,x)p(x)dx -\int_E\left(\sum_{j=0}^{2^{m}-1}\sum_{i=0}^{2^{k_m}} \beta_{j,i}I_{E_j}(x)g(x,u_i)\right)p(x)dx \right\vert.\\
\end{align*}
By the definition of $\beta_{j,i}$ in (\ref{convergence:usual_approx_c}) and the triangle inequality it follows that
\begin{align*}
\left\vert I \right\vert &\leq \left\vert \int_E\int_U g(x,u) \eta_0(du,x)p(x)dx - \int_E\left(\sum_{i=0}^{2^{k_m}}g(x,u_i)\int_{U_i}\eta_0(du,x)\right)p(x)dx\right\vert\\
    &+ \left\vert \int_E\left(\sum_{i=0}^{2^{k_m}}g(x,u_i)\int_{U_i}\eta_0(du,x)-\sum_{j=0}^{2^{m}-1}\sum_{i=0}^{2^{k_m}}\int_{U_i}\eta_0(du,x_j)I_{E_j}(x)g(x,u_i)\right)p(x)dx\right\vert\\
    &\equiv \left\vert I_1 \right\vert+ \left\vert I_2 \right\vert.
    \end{align*}
Observe that
\begin{align*}
\left\vert I_1\right\vert = \left\vert \int_E \left( \sum_{i=0}^{2^{k_m}}\int_{U_i} \left(g(x,u)-g(x,u_i)\right) \eta_0(du,x)\right)p(x)dx
\right\vert.
\end{align*}
By Lemma \ref{convergence:helper_lemma2}, there is a $\delta>0$ such that for all $\vert u-v \vert<\delta$, we have that
$\vert g(x,u)-g(x,v)\vert<\frac{\epsilon}{2}$, uniformly in $x\in E$. Choose $m_1$ large enough such that for all $m\geq m_1$ it is true that
$\frac{1}{2^{k_m}}<\delta$. Then
\begin{equation}
\vert I_1\vert  < \left\vert \int_E \left( \sum_{i=0}^{2^{k_m}}\int_{U_i} \frac{\epsilon}{2} \eta_0(du,x)\right)p(x)dx\right\vert
= \frac{\epsilon}{2}.
\end{equation}
We now examine the term
\begin{equation}
\left\vert I_2\right\vert = \left\vert \int_E\sum_{i=0}^{2^{k_m}}g(x,u_i)
\left(\int_{U_i}\eta_0(du,x)-\sum_{j=0}^{2^{m}-1}I_{E_j}(x)\int_{U_i}\eta_0(du,x_j)\right)p(x)dx\right\vert.
\label{analysis:inft_bss_bcs:i2_definition}
\end{equation}
Regard $I_2$ as a sequence with two indices, say $I_2(a,b) \equiv I_2(a, k_b)\equiv I_2$, where $a$ and $b$ are two
discretization levels, with a slight abuse of notation.\\
Our first claim is that $I_2(a,b)$ is a Cauchy sequence in $b$. To see this, we analyze two successive elements of the
sequence. Consider
\begin{align*}
\left\vert I_2(a,b+1)-I_2(a,b)\right\vert &= \left\vert I_2(a, k_{b+1}) - I_2(a, k_{b}) \right\vert\\
&=\left\vert \int_E\left[ \sum_{i=0}^{2^{k_{b+1}}}g(x,\tilde{u}_i)\left(\eta_0(\tilde{U}_i,x)-\sum_{j=0}^{2^{a}-1}I_{E_j}(x)\eta_0(\tilde{U}_i,x_j)\right)\right.\right.\\
&\left.\left.\quad-\sum_{i=0}^{2^{k_{b}}}g(x,u_i)\left(\eta_0(U_i,x)-\sum_{j=0}^{2^{a}-1}I_{E_j}(x)\eta_0(U_i,x_j)\right)\right]p(x)dx\right\vert
\end{align*}
where $\tilde{U}_i$ and $\tilde{u_i}$ are used to indicate the partition of $U$ and the points of the discrete set in $U$ of the
discretization level $b+1$, defined analogously to (\ref{convergence:discrete-U-set-points}) and (\ref{convergence:discrete-U-set}).
Due to the additivity of measures, the two sums over $i$, if regarded as a Riemann-type approximation to an integral, only differ by a 
more accurate choice of the `rectangle height' $g(x,u_i)$ and $g(x,\tilde{u}_i)$ in the Riemann sum. To formalize this, for 
$i\in\{0,\ldots,2^{k_{b+1}}\}$ let $\pi(i)\in\{0,\ldots,2^{k_{b}}\} $
be the index such that $\tilde{U}_i \subset U_{\pi(i)}$. 
Observe that $\sum_{i=0}^{2^{k_{b+1}}}\left\vert  \eta_0(U_i,x)-\eta_0(U_i,x_j)\right\vert \leq 2$, independently of $x_j$, 
and thus independently of our choice of $E^{(a)}$. This is due to the fact
regular conditional probabilities are indeed probability measures with a full mass of $1$.
Then,
\begin{align*}
&\left\vert I_2(a,b+1)-I_2(a,b)\right\vert\\
&=\left\vert\int_E \left[\sum_{i=0}^{2^{k_{b+1}}}\left( g(x,\tilde{u}_i) - g(x,u_{\pi(i)})\right)\cdot
\left(\eta_0(\tilde{U}_i,x)-\sum_{j=0}^{2^{a}-1}I_{E_j}(x)\eta_0(\tilde{U}_i,x_j)\right)\right] p(x)dx\right\vert\\
&\leq\int_E \left[\sum_{i=0}^{2^{k_{b+1}}}\left\vert\ g(x,\tilde{u}_i) - g(x,u_{\pi(i)})\right\vert\cdot
\left\vert\eta_0(\tilde{U}_i,x)-\sum_{j=0}^{2^{a}-1}I_{E_j}(x)\eta_0(\tilde{U}_i,x_j)\right\vert\right] p(x)dx\\
&\leq K \left(\frac{1}{2}\right)^{b+1}\int_E \left[\sum_{i=0}^{2^{k_{b+1}}}\left\vert\eta_0(\tilde{U}_i,x)-
\sum_{j=0}^{2^{a}-1}I_{E_j}(x)\eta_0(\tilde{U}_i, x_j)\right\vert\right] p(x)dx\\
&\leq K \left(\frac{1}{2}\right)^{b+1}\cdot 2 =K \left(\frac{1}{2}\right)^{b}
\end{align*}
by the fact that 
$\left\vert\ g(x,\tilde{u}_i) - g(x,u_{\pi(i)})\right\vert$ is uniformly bounded by $K \left(\frac{1}{2}\right)^{b+1}$,
with $K=1$ if $g(x,u) = c_0(x,u)$, and $K=\max\{\|f_1\|_\mathscr{D}, \ldots, \|f_k\|_\mathscr{D}\}$ if 
$g(x,u) = Af_k(x,u)$ by our choice of $U^{\left(k_m\right)}$, compare (\ref{convergence:discrete-U-bounds}).\\
Now, for some $\vartheta>0$, choose $b$ large enough such that
$\sum_{j=b}^\infty \left(\frac{1}{2}\right)^j < \frac{\vartheta}{K}$. Then, for all 
$b_1>b_2\geq b$, we have
\begin{align}
\vert I_2(a,b_1)-I_2(a,b_2) \vert &= \left\vert\sum_{j=b_2}^{b_1-1}I_2(a,j+1)-I_2(a,j)\right\vert\nonumber\\
&\leq\sum_{j=b_2}^{b_1-1}\left\vert I_2(a,j+1)-I_2(a,j)\right\vert\nonumber\\
&\leq K\sum_{j=b_2}^{b_1-1} \left(\frac{1}{2}\right)^{j}<\vartheta,\label{convergence:bound-cauchy-increment}
\end{align}
revealing that $I_2$ is Cauchy in $b$, which is the same as saying it is Cauchy in $k_b$. The bound on the increment
of $I_2$ in $k_b$ (given by (\ref{convergence:bound-cauchy-increment})) is independent of $a$, so it does not depend on the choice of
$E^{(a)}$. Given this result,
choose $m_2\geq m_1$ such that for all $m\geq m_2$, we have that $k_m$ is large enough to make 
$\vert I_2(m,b_1)- I_2(m,b_2) \vert <\frac{\epsilon}{4}$ hold for all $b_1, b_2 \geq k_m$.
An application of the dominated convergence theorem together with the assumption that
$x\mapsto \eta(U_i,x)$ is continuous almost everywhere reveals that the choice of coefficients in (\ref{convergence:usual_approx_c}) 
ensures that for $k_m$ fixed and for each $i\in \{0,1,\ldots,2^{k_m}\}$, there is a $m^{(1,i)}$ large enough such that for all 
$m^{(i)}\geq m^{(1,i)}$, we have
\begin{align}
&\int_E \left\vert \eta_0(U_i,x)-\sum_{j=0}^{2^{m^{(i)}}-1}I_{E_j}(x)\eta_0(U_i,x_j)\right\vert p(x)dx \nonumber \\
&\qquad\qquad\qquad\qquad<\frac{\epsilon}{4\max\left\{\|c_0\|_\infty,\|Af_1\|_\infty,\ldots \|Af_n\|_\infty\right\}(2^{k_m+1})}
\label{convergence:approximation_of_control}
\end{align}
Set $\tilde{m}=\max\left\{\max_{i=0,\ldots,2^{k_m}}m^{(1,i)},m_2\right\}$. Then,
\begin{align*}
I_2(\tilde{m},k_m) &\leq \int_E\|g\|_\infty \sum_{i=0}^{2^{k_m}}\left\vert \eta_0(U_i,x)-\sum_{j=0}^{2^{\tilde{m}}-1}I_{E_j}(x)\eta_0(U_i,x_j)\right\vert p(x)dx\\
&\leq\|g\|_\infty \sum_{i=0}^{2^{k_m}} \int_E\left\vert \eta_0(U_i,x)-\sum_{j=0}^{2^{\tilde{m}}-1}I_{E_j}(x)\eta_0(U_i,x_j)\right\vert p(x)dx\\
&\leq\frac{\epsilon}{4}.
\end{align*}
Note that $I_2(\tilde{m},k_m)$ is decreasing in $\tilde{m}$, which again is revealed using the dominated convergence theorem.
Also,
for $l\geq k_m$, we have
\begin{align*}
\left\vert I_2(\tilde{m},l) \right\vert &\leq \left\vert I_2(\tilde{m},l) - I_2(\tilde{m},k_m) \right\vert +
\left\vert I_2(\tilde{m},k_m) \right\vert\\
&< \frac{\epsilon}{4} + \frac{\epsilon}{4} = \frac{\epsilon}{2}
\end{align*}
which means that $I_2 \equiv I_2(\tilde{m},k_m)$ does not exceed $\frac{\epsilon}{2}$ when $\tilde{m}$ or $k_m$ increase.
Choose $m_0= \max\{\tilde{m},m_2\}$. Then, for all 
$m\geq m_0$, we have that $I_2<\frac{\epsilon}{2}$.
\end{proof}
\begin{prop}
\label{convergence:cauchytrick_proposition2_singular}
Consider a singular occupation measure $\mu_1$ that decomposes into $\mu_1(dx\times du) = \eta_1(du,x)\mu_{1,E}(dx)$. 
Let $g(x,u) = c_1(x,u)$ or $g(x,u) = Bf_k(x,u)$ for any $k=1,2,\ldots,n$. For $\epsilon>0$, there exists an $m_0\in \mathbb{N}$ such that for all
$m\geq m_0$,
\begin{equation}
\left\vert \int_E \int_U g(x,u) \eta_1(du,x)\mu_{1,E}(dx) - \int_E \int_U g(x,u) \hat{\eta}_{1,m}(du,x)\mu_{1,E}(dx) \right\vert <\epsilon,
\end{equation}
where $\hat{\eta}_{1,m}$ is of the form (\ref{approximation:mnk_kernel_s}), using the coefficients specified in 
(\ref{convergence:usual_approx_s}).
\end{prop}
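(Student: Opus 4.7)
The plan is to imitate the proof of Proposition \ref{convergence:cauchytrick_proposition2}, but the argument should be substantially simpler because $\mu_{1,E}$ is concentrated on only the two points $e_l$ and $e_r$ by (\ref{approximation:inft_bss_bcs:mnk_singular}). Consequently, no state-space discretization is needed, and the entire term analogous to $I_2$ in the previous proof drops out — only the $I_1$-type piece involving a discretization of $U$ survives.

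First I would use (\ref{approximation:inft_bss_bcs:mnk_singular}) to collapse the $\mu_{1,E}$-integral into a weighted sum of two point evaluations at $e_l$ and $e_r$ with weights $w_1$ and $w_2$. Both $\eta_1(\cdot, s)$ integrals (for $s = e_l, e_r$) are then compared to their point-mass approximations $\hat{\eta}_{1,m}(\cdot, s)$ from (\ref{approximation:mnk_kernel_s}). Inserting the definition $\zeta_{s,i} = \eta_1(U_i, s)$ from (\ref{convergence:usual_approx_s}) and partitioning $U$ via the sets $U_i$ of (\ref{convergence:discrete-U-set}), each of the two resulting differences rewrites as
\begin{equation*}
\sum_{i=0}^{2^{k_m}}\int_{U_i}\bigl[g(s,u) - g(s,u_i)\bigr]\,\eta_1(du, s), \qquad s \in \{e_l, e_r\}.
\end{equation*}

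At this point Lemma \ref{convergence:helper_lemma2_singular} applies directly: for the prescribed $\epsilon > 0$, it furnishes a $\delta > 0$ such that $|g(s,u) - g(s,v)| < \epsilon/(w_1 + w_2 + 1)$ whenever $|u-v| < \delta$ and $s \in \{e_l, e_r\}$. Choosing $m_0$ large enough that $(u_r - u_l)/2^{k_m} < \delta$ for every $m \geq m_0$ ensures that each $u \in U_i$ lies within $\delta$ of $u_i$. Since $\eta_1(\cdot, s)$ is a probability measure on $U$, bounding the integrand pointwise and summing over $i$ gives a contribution of at most $\epsilon/(w_1+w_2+1)$ for each $s$; weighting by $w_1$ and $w_2$ and adding yields the overall bound $(w_1+w_2)\epsilon/(w_1+w_2+1) < \epsilon$.

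The main obstacle — or rather, its notable absence — is that the Cauchy-sequence and dominated-convergence machinery needed to control the term $I_2$ in Proposition \ref{convergence:cauchytrick_proposition2} is not required here. The two-point support of $\mu_{1,E}$ eliminates the need for an $E$-discretization altogether, so only the straightforward $U$-mesh refinement argument, driven by Lemma \ref{convergence:helper_lemma2_singular} together with the uniform mesh choice in (\ref{convergence:discrete-U-bounds}), has to be carried out. The only real care is bookkeeping the constants so that the $w_1, w_2$ factors are absorbed into $\epsilon$.
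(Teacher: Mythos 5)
Your proposal is correct and follows essentially the same route as the paper's proof: reduce to the two boundary points $e_l,e_r$, insert the coefficient choice $\zeta_{j,i}=\eta_1(U_i,s)$ to rewrite the difference as $\sum_i\int_{U_i}[g(s,u)-g(s,u_i)]\,\eta_1(du,s)$, and invoke Lemma \ref{convergence:helper_lemma2_singular} with the $U$-mesh refined below the resulting $\delta$. The only cosmetic difference is your normalizing constant $\epsilon/(w_1+w_2+1)$ versus the paper's $\epsilon/\mu_{1,E}(E)$, which changes nothing.
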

\begin{proof}
 We only have to show that  
 \begin{equation}
 \left\vert \int_U g(s,u) \eta_1(du,s) - g(s,u) \hat{\eta}_{1,m}(du,s) \right\vert < \frac{\epsilon}{\mu_{1,E}(E)}
 \end{equation}
 uniformly for $s=e_l$ or $s=e_r$, since $\mu_{1,E}$ only puts mass on these two points. By (\ref{convergence:usual_approx_s}),
 \begin{align*}
 &\left\vert \int_U g(s,u) \eta_1(du,s) - \int_Ug(s,u) \hat{\eta}_{1,m}(du,s) \right\vert \\
 &\qquad\qquad\qquad=\left\vert \sum_{i=0}^{2^{k_m}}\int_{U_i} g(s,u) \eta_1(du,s) -\int_{U_i} g(s,u) \hat{\eta}_{1,m}(du,s)\right\vert\\
 &\qquad\qquad\qquad=\left\vert \sum_{i=0}^{2^{k_m}}\int_{U_i} g(s,u) \eta_1(du,s) - g(s,u_i) \zeta_{j,i}\right\vert\\ 
 &\qquad\qquad\qquad=\left\vert \sum_{i=0}^{2^{k_m}}\int_{U_i} g(s,u) \eta_1(du,s) - g(s,u_i) \int_{U_i}\eta_1(du,s)\right\vert\\ 
 &\qquad\qquad\qquad\leq \sum_{i=0}^{2^{k_m}}\int_{U_i} \vert g(s,u) - g(s,u_i) \vert\,\eta_1(du,s)
 \end{align*}
holds. According to Lemma \ref{convergence:helper_lemma2_singular}, there is a $\delta>0$ such that 
$\vert g(s,u) - g(s,v) \vert<\frac{\epsilon}{\mu_{1,E}(E)}$ whenever $\vert u-v \vert<\delta$. Hence it suffices to choose $m_0$ large enough such
that for all $m\geq m_0$ it is true that $\frac{1}{2^{k_m}}<\delta$ to ensure that $\vert u - u_i \vert<\delta$.
\end{proof}
Proposition \ref{convergence:cauchytrick_proposition2} and Proposition \ref{convergence:cauchytrick_proposition2_singular} have established the 
approximation properties of the coefficient choices made by (\ref{convergence:usual_approx_c}) and (\ref{convergence:usual_approx_s}),
without paying any respect to the constraints
defining $\mathscr{M}^l_{n,m}$. We proceed to link these approximate
controls to measures that actually fulfill those constraints. To do so, we need to be able to quantify how far away a given approximation is 
from satisfying the constraints. This motivates the following definitions.
\begin{defn}
Let $\eta_0$ be any relaxed control. For $n,m\in \mathbb{N}$, define the constraint matrix 
$C^{(m)}\in \mathbb{R}^{n+1,2^{m}}$ by 
\begin{align*}
C^{(m)}_{k,j} &=& \int_E \int_U A f_k(x,u)\eta_0(du,x)p_j(x)dx, \quad k=1,2,\ldots,n,\quad j=0,1,\ldots,2^{m}-1\\
C^{(m)}_{n+1,j} &=& \int_E p_j(x)dx \qquad \qquad \quad \,\,\quad \quad\qquad \qquad \qquad \qquad \quad\ j=0,1,\ldots, 2^{m}-1.
\end{align*}
\end{defn}
\begin{defn}
 For $m\in \mathbb{N}$ and $\hat{\eta}_{0,m}$ given by (\ref{convergence:usual_approx_c}), $\hat{\eta}_{1,m}$ given by
 (\ref{convergence:usual_approx_s}) and some $\tilde{p}$ in the span of
 $\{p_0,p_1,\ldots,p_{2^{m}-1}\}$, with $\mu_{1,E}$ being the state space marginal of the previously fixed measure $\mu_{1}$, 
 the constraint error $d^{(m)}(\tilde{p})\in \mathbb{R}^{n+1}$ is defined for $k=1,\ldots,n$ by
 \begin{align*}
 d_k^{(m)}(\tilde{p}) &= Rf_k -\int_E \int_U A f_k(x,u)\hat{\eta}_{0,m}(du,x) \tilde{p}(x)dx\\
 &\qquad \qquad \qquad \qquad - \int_E \int_U Bf_k(x,u)\hat{\eta}_{1,m}(du,x)\mu_{1,E}(dx),
 \end{align*}
 and for $k={n+1}$ by $d^{(m)}_{n+1}=1-\int_E \tilde{p}(x)(dx)$.
\end{defn}
 One can increase $m$ to the point that the constraint matrix $C^{(m)}$ has full rank. This will help us find adjustments to the coefficients of $\tilde{p}$
 which will let the constraint error vanish. The following results show
 that we can attain an arbitrarily small constraint error using the proposed approximation. Their proofs are rather technical, and are
 given in \Cref{appendix_a}.
 \begin{lem}
\label{analysis:inft_bss_bcs:suitable_approximation}
 Let $p$ be a probability density function with $\lambda\left(\left\{x : p(x)=0\right\} \right) = 0$. Then, for any $\epsilon>0$ and 
 $D_1>0$,
 there exists an $\hat{\epsilon}_1<\epsilon$ and an $m_0$ such that for all $m\geq m_0$, there is a $\tilde{p}_{m}$ in the span of
 $\{p_0,p_,\ldots, p_{2^{m}-1}\}$ with $\|p-\tilde{p}_{m}\|_{L^1(E)}<\frac{\hat{\epsilon}_1}{D_1}$ and $\tilde{p}_{m}\geq \hat{\epsilon}_1$ 
 on $E$.
\end{lem}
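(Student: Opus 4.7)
The plan is to start from the natural cell-averaged piecewise constant approximation of $p$ on the dyadic partition of $E$ at level $m$, and then raise it pointwise by taking a maximum with the target floor value $\hat{\epsilon}_1$. Concretely, define
\begin{equation*}
\bar{p}_m(x) = \sum_{j=0}^{2^m-1} \left(\frac{1}{|E_j|}\int_{E_j} p(y)\,dy\right) p_j(x),
\qquad
\tilde{p}_m(x) = \max\!\bigl(\bar{p}_m(x), \hat{\epsilon}_1\bigr).
\end{equation*}
Since the maximum is taken cell by cell, $\tilde{p}_m$ is itself piecewise constant on the dyadic cells and hence lies in $\Span\{p_0,\dots,p_{2^m-1}\}$, and by construction $\tilde{p}_m \geq \hat{\epsilon}_1$ on $E$. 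The standard $L^1$-density of step functions (equivalently, Lebesgue differentiation applied along the nested dyadic cells) yields $\|p - \bar{p}_m\|_{L^1(E)} \to 0$ as $m\to\infty$.

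The remaining task is to show that the ``lifting'' from $\bar{p}_m$ to $\tilde{p}_m$ costs very little in $L^1$, and this is where the hypothesis $\lambda(\{p=0\})=0$ is used. Since $\{p<\alpha\}\downarrow \{p=0\}$ as $\alpha\downarrow 0$, continuity of measure from above gives $\lambda(\{p<\alpha\})\to 0$. Choose $\hat{\epsilon}_1 \in (0,\epsilon)$ small enough that $\lambda\bigl(\{p<2\hat{\epsilon}_1\}\bigr) < \tfrac{1}{4D_1}$, and then choose $m_0$ so large that $\|p-\bar{p}_m\|_{L^1(E)}< \tfrac{\hat{\epsilon}_1}{4D_1}$ for all $m\geq m_0$.

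The key estimate is the pointwise bound $0 \leq \tilde{p}_m - \bar{p}_m \leq \hat{\epsilon}_1\,\mathbf{1}_{\{\bar{p}_m<\hat{\epsilon}_1\}}$, combined with the inclusion
\begin{equation*}
\{\bar{p}_m<\hat{\epsilon}_1\} \subset \{p<2\hat{\epsilon}_1\}\cup\{|p-\bar{p}_m|>\hat{\epsilon}_1\}.
\end{equation*}
Integrating and applying Markov's inequality to the second piece gives
\begin{equation*}
\|\tilde{p}_m - \bar{p}_m\|_{L^1(E)}
\leq \hat{\epsilon}_1\,\lambda\bigl(\{p<2\hat{\epsilon}_1\}\bigr) + \|p-\bar{p}_m\|_{L^1(E)}
< \frac{\hat{\epsilon}_1}{4D_1} + \frac{\hat{\epsilon}_1}{4D_1} = \frac{\hat{\epsilon}_1}{2D_1}.
\end{equation*}
The triangle inequality then yields $\|p-\tilde{p}_m\|_{L^1(E)} < \hat{\epsilon}_1/D_1$, closing the argument.

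The main obstacle I anticipate is the tension between the two competing requirements: the floor $\hat{\epsilon}_1$ must be strictly positive to satisfy the lower bound, yet the $L^1$-error must be no larger than $\hat{\epsilon}_1/D_1$, which is a proportionally smaller quantity that must be achievable even after we have inflated the approximation by $\hat{\epsilon}_1$ on potentially large sets. The decisive observation that resolves this is that the set where inflation actually happens is contained in $\{p<2\hat{\epsilon}_1\}$ up to a Markov-type remainder, and the former shrinks to zero measure as $\hat{\epsilon}_1\downarrow 0$ precisely because of the hypothesis that $p$ vanishes on a null set; so picking $\hat{\epsilon}_1$ first to control $\lambda(\{p<2\hat{\epsilon}_1\})$ and then $m$ to control $\|p-\bar{p}_m\|_{L^1(E)}$ (in that order) is what makes everything fit.
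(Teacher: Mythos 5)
Your proof is correct and rests on essentially the same idea as the paper's: use $\lambda(\{p=0\})=0$ and continuity from above to choose $\hat{\epsilon}_1$ so that the low-level set of $p$ has Lebesgue measure of order $1/D_1$, whence flooring the approximation at $\hat{\epsilon}_1$ costs only $O(\hat{\epsilon}_1/D_1)$ in $L^1$. The only difference is the order of operations — the paper truncates $p$ from below first and then discretizes, while you take cell averages first and then truncate, which forces your extra Markov-inequality step to relate $\{\bar{p}_m<\hat{\epsilon}_1\}$ back to a level set of $p$, but changes nothing essential.
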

  \begin{lem}
\label{analysis:inft_bss_bcs:bound_on_const_error}
 Consider a pair of measures $(\mu_0,\mu_1)\in\mathscr{M}_{n,\infty}$, and let $\mu_0(dx\times du) = \eta_0(du,x)\,p(x)\,dx$ as well as
 $\mu_1(dx\times du) = \eta_1(du,x)\,\mu_{1,E}(dx)$.
 Let \\$\bar{A} = \max_{k=1,\ldots,n} \|Af_k\|_\infty$. For $\delta>0$ and $D_2\geq 1$, there exists an 
 $\hat{\epsilon}_2\leq\delta$ and an
 $m_0\in \mathbb{N}$ such that for all $m\geq m_0$, there is a function $\tilde{p}_{m}$ in the span of $\{p_0,p_1,\ldots,p_{2^m-1}\}$ with 
 $\|d^{(m)}(\tilde{p}_{m})\|_\infty<\hat{\epsilon}_2$, where $d^{(m)}(\tilde{p}_{m})$ is the constraint error using
 the approximations $\hat{\eta}_{0,m}(du,x)$ and $\hat{\eta}_{1,m}(du,x)$ of the given controls $\eta_0$ and $\eta_1$ defined by the 
 coefficients given in (\ref{convergence:usual_approx_c}) and (\ref{convergence:usual_approx_s}). In particular,
 $\|p-\tilde{p}_{m}\|_{L^1(E)}<\frac{\hat{\epsilon}_2}{\rule{0pt}{9pt}3\max\{\bar{A},1\}}$ as well as 
 $\tilde{p}_{m}\geq D_2\cdot \hat{\epsilon}_2$ holds.
\end{lem}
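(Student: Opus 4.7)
The plan is to assemble this lemma from Lemma~\ref{analysis:inft_bss_bcs:suitable_approximation}, which supplies a density approximation $\tilde{p}_m$ with both a prescribed $L^1$-accuracy and a prescribed pointwise lower bound, together with Propositions~\ref{convergence:cauchytrick_proposition2} and~\ref{convergence:cauchytrick_proposition2_singular}, which control the error of replacing the relaxed controls $\eta_0$ and $\eta_1$ by their discrete versions $\hat{\eta}_{0,m}$ and $\hat{\eta}_{1,m}$. The density $\tilde{p}_m$ is the one produced by the former lemma; the real work is choosing the constants in the prior statements so that the constraint error actually collapses to something of size $\hat{\epsilon}_2$.

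First I would decompose $d^{(m)}(\tilde{p}_m)$. Since $(\mu_0,\mu_1)$ satisfies the linear constraints for $f_1,\ldots,f_n$ exactly, $Rf_k$ equals the sum of the two integrals against $\eta_0,p$ and $\eta_1,\mu_{1,E}$. Substituting into the definition of $d_k^{(m)}(\tilde{p}_m)$ and adding and subtracting $\int Af_k\,\hat{\eta}_{0,m}(du,x)\,p(x)\,dx$ splits the error for $k=1,\ldots,n$ into three pieces: an $\eta_0$-discretization term $\int Af_k(\eta_0-\hat{\eta}_{0,m})\,p$, a density-swap term $\int Af_k\,\hat{\eta}_{0,m}(p-\tilde{p}_m)$, and an $\eta_1$-discretization term $\int Bf_k(\eta_1-\hat{\eta}_{1,m})\,\mu_{1,E}$. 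The last row is simply $d_{n+1}^{(m)}=\int_E(p-\tilde{p}_m)\,dx$.

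The key move is to invoke Lemma~\ref{analysis:inft_bss_bcs:suitable_approximation} with input $\epsilon=D_2\delta$ and $D_1=3D_2\max\{\bar{A},1\}$, obtaining some $\hat{\epsilon}_1<D_2\delta$ and a threshold $m_0^{(1)}$ such that for every $m\geq m_0^{(1)}$ there is $\tilde{p}_m$ in the span of $\{p_0,\ldots,p_{2^m-1}\}$ with $\|p-\tilde{p}_m\|_{L^1(E)}<\hat{\epsilon}_1/D_1$ and $\tilde{p}_m\geq \hat{\epsilon}_1$. Setting $\hat{\epsilon}_2:=\hat{\epsilon}_1/D_2$ delivers the ancillary bounds of the statement: $\hat{\epsilon}_2\leq\delta$, $\|p-\tilde{p}_m\|_{L^1(E)}<\hat{\epsilon}_2/(3\max\{\bar{A},1\})$, and $\tilde{p}_m\geq D_2\hat{\epsilon}_2$. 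Because $\hat{\eta}_{0,m}(\cdot,x)$ is a probability measure, the density-swap piece is at most $\bar{A}\|p-\tilde{p}_m\|_{L^1(E)}\leq\hat{\epsilon}_2/3$ at once; Proposition~\ref{convergence:cauchytrick_proposition2}, applied to each $g=Af_k$ and then maximized over the finitely many $k$, yields a threshold $m_0^{(2)}$ past which the $\eta_0$-discretization piece is below $\hat{\epsilon}_2/3$; Proposition~\ref{convergence:cauchytrick_proposition2_singular} gives the analogous $m_0^{(3)}$ for the singular piece; finally $|d_{n+1}^{(m)}|\leq\|p-\tilde{p}_m\|_{L^1(E)}<\hat{\epsilon}_2/3$. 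Taking $m_0=\max\{m_0^{(1)},m_0^{(2)},m_0^{(3)}\}$ then forces $\|d^{(m)}(\tilde{p}_m)\|_\infty<\hat{\epsilon}_2$ for all $m\geq m_0$.

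The main obstacle is the constant bookkeeping. A single density $\tilde{p}_m$ must simultaneously be bounded below by $D_2\hat{\epsilon}_2$ (so that the coefficient adjustments of the subsequent step, which use the full-rank $C^{(m)}$ to eliminate the constraint error entirely, cannot push the density negative) and approximate $p$ in $L^1$ finely enough to absorb the factor $\bar{A}$ in the density-swap piece, all while $\hat{\epsilon}_2$ itself stays below $\delta$. Threading the auxiliary parameters $\hat{\epsilon}_1=D_2\hat{\epsilon}_2$ and $D_1=3D_2\max\{\bar{A},1\}$ through Lemma~\ref{analysis:inft_bss_bcs:suitable_approximation} is the single balancing move that reconciles these competing demands; once that interplay is in place, the rest reduces to applying the previously established approximation propositions on the finite families $\{Af_k\}_{k=1}^n$ and $\{Bf_k\}_{k=1}^n$.
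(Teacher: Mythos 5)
Your proposal is correct and follows essentially the same route as the paper: the identical three-term decomposition of $d_k^{(m)}(\tilde{p}_m)$, the same invocation of Lemma \ref{analysis:inft_bss_bcs:suitable_approximation} with $D_1=3D_2\max\{\bar{A},1\}$ and the rescaling $\hat{\epsilon}_2=\hat{\epsilon}_1/D_2$, and the same appeals to Propositions \ref{convergence:cauchytrick_proposition2} and \ref{convergence:cauchytrick_proposition2_singular} for the two control-discretization pieces. The only (harmless) deviations are passing $\epsilon=D_2\delta$ rather than $\delta$ to the auxiliary lemma and bounding $|d_{n+1}^{(m)}|$ directly by $\|p-\tilde{p}_m\|_{L^1(E)}$, which is in fact slightly cleaner than the paper's contradiction argument.
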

Next, we establish that we can find a `correction' term $y$ for the coefficients of $\tilde{p}$, which can be used to define a measure that
satisfies the constraints, with a 
maximum norm that does not exceed a given bound $\epsilon$. For the proof of the following statement, we again refer to \Cref{appendix_a}.
\begin{lem}
Consider a pair of measures $(\mu_0,\mu_1)\in\mathscr{M}_{n,\infty}$, and let $\mu_0(dx\times du) = \eta_0(du,x)\,p(x)\,dx$ as well as
 $\mu_1(dx\times du) = \eta_1(du,x)\,\mu_{1,E}(dx)$.
 For any $\vartheta>0$, there is a $\hat{\vartheta}<\vartheta$ and an $m_0\in \mathbb{N}$ such that for all $m\geq m_0$,
 there is a $\tilde{p}_{m}\in \Span\{p_0,p_1,\ldots,p_{2^{m}-1} \}$ such that the equation
 $C^{(m)} y = -d^{(m)}(\tilde{p}_{m})$ has a solution $\tilde{y}$ with $\|\tilde{y}\|_\infty\leq\hat{\vartheta}$, 
 In particular, $\tilde{p}_{m}\geq\hat{\vartheta}$ and $\|p-\tilde{p}_{m}\|_{L^1(E)}<\hat{\vartheta}$ hold.
 \label{analysis:inft_bss_bcs:bound_on_correction}
\end{lem}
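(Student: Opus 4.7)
The strategy is to treat this lemma as the final link in an iterated chain: \Cref{analysis:inft_bss_bcs:bound_on_const_error} already delivers an approximation $\tilde{p}_{m}$ whose constraint error can be made as small as we wish, and what remains is to solve the underdetermined system $C^{(m)}y=-d^{(m)}(\tilde{p}_{m})$ with a solution whose $\ell^\infty$-norm is below $\hat{\vartheta}$. Given $\vartheta>0$, I would first pick $\hat{\vartheta}<\vartheta$, and then choose $\hat{\epsilon}_2$ small and $D_2$ large relative to $\hat{\vartheta}$. Invoking \Cref{analysis:inft_bss_bcs:bound_on_const_error} with these parameters produces, for every $m\geq m_0$, an approximating density $\tilde{p}_{m}$ that already satisfies the two \emph{in particular} conclusions $\tilde{p}_{m}\geq\hat{\vartheta}$ (by matching $D_2\hat{\epsilon}_2=\hat{\vartheta}$) and $\|p-\tilde{p}_{m}\|_{L^1(E)}<\hat{\vartheta}$ (by ensuring $\hat{\epsilon}_2/(3\max\{\bar{A},1\})<\hat{\vartheta}$).

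The next step is to exhibit a small solution to the linear system. By the remark preceding the lemma, the constraint matrix $C^{(m)}\in\mathbb{R}^{(n+1)\times 2^{m}}$ attains full row rank $n+1$ once $m$ is large enough. I would isolate $n+1$ column indices $j_0,\dots,j_n$ whose associated square submatrix $\tilde{C}^{(m)}$ is invertible with $\|(\tilde{C}^{(m)})^{-1}\|_\infty$ controlled in $m$; intuitively, the indices should correspond to mesh cells on which $p$ is bounded away from zero and on which the integrals $x\mapsto \int_U Af_k(x,u)\,\eta_0(du,x)$ together with the normalization row realize the linear independence of $\{f_1,\dots,f_n\}$. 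Setting the coordinates of $\tilde{y}$ at these indices to $(\tilde{C}^{(m)})^{-1}(-d^{(m)}(\tilde{p}_{m}))$ and all remaining coordinates to zero yields a solution of $C^{(m)}y=-d^{(m)}(\tilde{p}_{m})$ with the estimate $\|\tilde{y}\|_\infty\leq \|(\tilde{C}^{(m)})^{-1}\|_\infty\,\|d^{(m)}(\tilde{p}_{m})\|_\infty$. Shrinking $\hat{\epsilon}_2$ one more time, if necessary, then drops this product below $\hat{\vartheta}$.

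The principal obstacle is securing the uniform control of $\|(\tilde{C}^{(m)})^{-1}\|_\infty$ as $m\to\infty$. Since each $p_j$ integrates to $1/2^{m}$, the entries of $C^{(m)}$ naturally scale like $1/2^{m}$, and after rescaling the $n+1$ selected columns one must argue that the resulting ``macroscopic'' matrix does not degenerate as the mesh is refined. This is precisely where the standing hypotheses of \Cref{assumptions}---namely $\lambda\bigl(\{p=0\}\bigr)=0$, the almost-everywhere continuity of $x\mapsto\eta_0(V,x)$, and the linear independence of the cubic B-spline basis spanning $\mathscr{D}_n$---become indispensable: they guarantee that on sufficiently fine partitions one can always locate cells on which the columns of the rescaled $C^{(m)}$ inherit a nondegenerate limiting behavior. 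I expect this nondegeneracy verification, together with the bookkeeping of constants needed to synchronize $\hat{\epsilon}_2$, $D_2$ and $\hat{\vartheta}$, to be the technical core of the proof deferred to \Cref{appendix_a}.
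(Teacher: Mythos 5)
Your overall architecture matches the paper's: invoke Lemma \ref{analysis:inft_bss_bcs:bound_on_const_error} with suitably tuned $\hat{\epsilon}_2$ and $D_2$ to obtain $\tilde{p}_m$ with the two ``in particular'' properties and a small constraint error, then solve $C^{(m)}y=-d^{(m)}(\tilde{p}_m)$ by inverting an $(n+1)\times(n+1)$ submatrix built from independent columns and padding with zeros. Up to that point you are on track.

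However, the step you flag as the ``principal obstacle'' --- uniform control of $\|(\tilde{C}^{(m)})^{-1}\|_\infty$ as $m\to\infty$ --- is not merely a technical verification you can defer; your proposed route through it does not work, and the paper does something different. Since each $p_j$ at level $m$ is supported on a cell of width $(e_r-e_l)/2^m$, every entry of $C^{(m)}$ is $O(2^{-m})$, so any $(n+1)\times(n+1)$ submatrix has inverse norm growing like $2^m$; after your rescaling the factor $2^m$ simply reappears in the solution, giving $\|\tilde{y}\|_\infty\lesssim 2^m\|d^{(m)}(\tilde{p}_m)\|_\infty$, and Lemma \ref{analysis:inft_bss_bcs:bound_on_const_error} gives no decay of $\|d^{(m)}\|_\infty$ in $m$ that could absorb this. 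Concentrating the whole correction on $n+1$ mesh cells of width $2^{-m}$ inevitably forces those few coefficients to be large. The paper's resolution is to fix the invertible submatrix once, at the \emph{first} level $m_1$ where $C^{(m_1)}$ attains full rank, set $D_2=\max\{1,\|(\bar{C}^{(m_1)})^{-1}\|_\infty\}$ using that single coarse matrix, and then transport the resulting solution to every finer level via the refinement identity $C^{(m+1)}_{k,2i}+C^{(m+1)}_{k,2i+1}=C^{(m)}_{k,i}$ (valid because the dyadic indicator cells are halved): duplicating components, $\bar{y}_{2i}=\bar{y}_{2i+1}=y_i$, yields a solution at level $m+1$ with \emph{unchanged} $\ell^\infty$-norm. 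This norm-preserving lifting is the key idea missing from your argument; it spreads the correction over all fine cells inside each coarse cell instead of concentrating it, and it removes any need to track how submatrices of $C^{(m)}$ degenerate as the mesh is refined.
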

At this point, we have gathered the results to prove Proposition \ref{convergence:second_part_big_proposition}, stated again for the sake 
of presentation.
\setcounter{thm}{1}
\begin{prop} 
For $(\mu_0,\mu_1) \in \mathscr{M}^l_n$ and each $\epsilon>0$, there is an $m_0$ such that for all $m\geq m_0$ there exists a 
$(\hat{\mu}_{0,m}, \hat{\mu}_{1,m}) \in \mathscr{M}^l_{n,m}$,
with
\begin{equation*}
\left\vert J(\mu_0,\mu_1) - J(\hat{\mu}_{0,m}, \hat{\mu}_{1,m})\right \vert <\epsilon.
\end{equation*}
\end{prop}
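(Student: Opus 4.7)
The plan is to build the discrete approximant $(\hat{\mu}_{0,m},\hat{\mu}_{1,m})$ in three steps from the ingredients supplied by the preceding lemmas and propositions. Decompose $\mu_0(dx\times du)=\eta_0(du,x)p(x)dx$ and $\mu_1(dx\times du)=\eta_1(du,x)\mu_{1,E}(dx)$. First, I assemble the discretized kernels $\hat{\eta}_{0,m}$ and $\hat{\eta}_{1,m}$ using the coefficient choices \eqref{convergence:usual_approx_c} and \eqref{convergence:usual_approx_s}. Second, for a $\vartheta>0$ to be fixed at the end, I invoke Lemma \ref{analysis:inft_bss_bcs:bound_on_correction} to obtain, for every sufficiently large $m$, a piecewise constant density $\tilde{p}_{m}\in\Span\{p_0,\ldots,p_{2^m-1}\}$ with $\tilde{p}_{m}\geq\hat{\vartheta}$ and $\|p-\tilde{p}_{m}\|_{L^1(E)}<\hat{\vartheta}$, together with a correction vector $\tilde{y}\in\mathbb{R}^{2^m}$ of sup-norm $\leq\hat{\vartheta}$ solving $C^{(m)}\tilde{y}=-d^{(m)}(\tilde{p}_{m})$. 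Third, I define $\hat{p}_{m}=\tilde{p}_{m}+\sum_{j=0}^{2^m-1}\tilde{y}_j p_j$ and set
\begin{equation*}
\hat{\mu}_{0,m}(dx\times du)=\hat{\eta}_{0,m}(du,x)\hat{p}_{m}(x)dx,\qquad \hat{\mu}_{1,m}(dx\times du)=\hat{\eta}_{1,m}(du,x)\mu_{1,E}(dx).
\end{equation*}

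I then have to verify $(\hat{\mu}_{0,m},\hat{\mu}_{1,m})\in\mathscr{M}^l_{n,m}$ and bound the cost gap. The inequalities $\tilde{p}_{m}\geq\hat{\vartheta}$ and $|\tilde{y}_j|\leq\hat{\vartheta}$ give $\hat{p}_{m}\geq 0$; the $(n+1)$-st component of $C^{(m)}\tilde{y}=-d^{(m)}(\tilde{p}_{m})$ forces $\int_E \hat{p}_{m}\,dx=1$, making $\hat{\mu}_{0,m}$ a probability measure; the first $n$ components cancel the constraint error, yielding $\int Af_k\,d\hat{\mu}_{0,m}+\int Bf_k\,d\hat{\mu}_{1,m}=Rf_k$ for $k=1,\ldots,n$; and the $l$-bound on $\mu_1(E\times U)$ is inherited by $\hat{\mu}_{1,m}$ since $\mu_{1,E}$ is preserved. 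To control $|J(\mu_0,\mu_1)-J(\hat{\mu}_{0,m},\hat{\mu}_{1,m})|$ I insert the hybrid intermediate $\int c_0\,\hat{\eta}_{0,m}(du,x)p(x)\,dx+\int c_1\,\hat{\eta}_{1,m}(du,x)\,\mu_{1,E}(dx)$ and apply the triangle inequality: Proposition \ref{convergence:cauchytrick_proposition2} with $g=c_0$ and Proposition \ref{convergence:cauchytrick_proposition2_singular} with $g=c_1$ bound the kernel-replacement error by $2\epsilon/3$ for $m$ large, while the density-replacement error reduces to $\|c_0\|_\infty\|p-\hat{p}_{m}\|_{L^1(E)}\leq\|c_0\|_\infty\bigl(\|p-\tilde{p}_{m}\|_{L^1(E)}+(e_r-e_l)\|\tilde{y}\|_\infty\bigr)\leq\|c_0\|_\infty(1+e_r-e_l)\hat{\vartheta}$, which drops below $\epsilon/3$ when $\vartheta$ is chosen small enough at the outset.

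The main obstacle is the ordering of the three small parameters. First I fix $\vartheta$ small enough that the density-replacement bound is below $\epsilon/3$; next I pick $m_0$ large enough that Lemma \ref{analysis:inft_bss_bcs:bound_on_correction} applies for this $\vartheta$; finally I enlarge $m_0$ if necessary so that each of the two kernel-level propositions delivers error below $\epsilon/3$. The substantive content is essentially buried in Lemma \ref{analysis:inft_bss_bcs:bound_on_correction}, whose proof (Appendix \ref{appendix_a}) must show that the $(n+1)\times 2^m$ system $C^{(m)}y=-d^{(m)}(\tilde{p}_{m})$ admits a solution whose sup-norm is controlled by $\|d^{(m)}(\tilde{p}_{m})\|_\infty$; this requires $C^{(m)}$ to have a right inverse of uniformly bounded norm as $m\to\infty$, which is the real analytical work and is why the assumptions on $p$ and on $\eta_0$ made in \Cref{assumptions} enter.
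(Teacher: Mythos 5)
Your proposal follows essentially the same route as the paper's proof: discretize the kernels via \eqref{convergence:usual_approx_c} and \eqref{convergence:usual_approx_s}, invoke Lemma \ref{analysis:inft_bss_bcs:bound_on_correction} to get $\tilde{p}_m$ together with a small-norm solution of $C^{(m)}y=-d^{(m)}(\tilde{p}_m)$, correct the density coefficients so the constraint error cancels exactly while non-negativity and normalization are preserved, and split the cost gap into kernel-replacement errors (Propositions \ref{convergence:cauchytrick_proposition2} and \ref{convergence:cauchytrick_proposition2_singular}) plus a density-replacement error controlled by $\|p-\hat{p}_m\|_{L^1(E)}$. The only slip is a sign: with $\tilde{y}$ solving $C^{(m)}\tilde{y}=-d^{(m)}(\tilde{p}_m)$, the corrected density must be $\hat{p}_m=\tilde{p}_m-\sum_j\tilde{y}_jp_j$ (as in the paper, $\gamma_j=\tilde{\gamma}_j-\tilde{y}_j$), since $d^{(m)}(\hat{p}_m)=d^{(m)}(\tilde{p}_m)-C^{(m)}(\gamma-\tilde{\gamma})$; your choice $\hat{p}_m=\tilde{p}_m+\sum_j\tilde{y}_jp_j$ would double the constraint error rather than annihilate it, though the fix is immediate and your non-negativity and $L^1$ estimates are unaffected.
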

\begin{proof} Fix $\epsilon>0$.  For $(\mu_0,\mu_1)\in\mathscr{M}^l_n$, let $\mu_{0,E}$ be the state space marginal of $\mu_0$ 
and let
$\eta_0$ be the regular conditional probability such that $\mu_0(dx\times du)=\eta_0(du,x)\mu_{0,E}(dx)$. Similarly, let 
$\mu_{1}(dx\times du) = \eta_1(du,x)\mu_{1,E}(dx)$. Define $\hat{\eta}_{0,m}$ and 
$\hat{\eta}_{1,m}$ using the coefficients given by (\ref{convergence:usual_approx_c}) and (\ref{convergence:usual_approx_s}), respectively. First, by 
Proposition \ref{convergence:cauchytrick_proposition2_singular}, we have that there is an $m_1$ such that $\forall\,m\geq m_1$,
\begin{equation}
\left\vert\int_E \int_U c_1(x,u)\eta_1(du,x) \mu_{1,E}(dx) - \int_E \int_U c_1(x,u)\hat{\eta}_{1,m}(du,x) \hat{\mu}_{1,E}(dx)\right\vert 
< \frac{\epsilon}{2}.
\end{equation}
Now, we consider the approximation of the cost accrued by $c_0$. We will show that
\begin{equation}
\left\vert \int_E\int_U c_0(x,u) \eta_0(du,x)p(x)dx - \int_E\int_U c_0(x,u) \hat{\eta}_{0,m}(du,x)\hat{p}_{m}(x)dx \right \vert <\frac{\epsilon}{2},
\end{equation}
for the given choice of $\hat{\eta}_{0,m}$ and a choice of $\hat{p}_{m}$ to be identified. 
This will be done by a successive application of the triangle inequality. First, observe that
\begin{align*}
&\left\vert \int_E\int_U c_0(x,u) \eta_0(du,x)p(x)dx - \int_E\int_U c_0(x,u) \hat{\eta}_{0,m}(du,x)\hat{p}_{m}(x)dx \right\vert\\
&\quad \leq\quad \left\vert \int_E\int_U c_0(x,u) \eta_0(du,x)p(x)dx - \int_E\int_U c_0(x,u) \hat{\eta}_{0,m}(du,x)p(x)dx \right\vert\\
&\qquad+ \left\vert \int_E\int_U c_0(x,u) \hat{\eta}_{0,m}(du,x)p(x)dx -\int_E\int_U c_0(x,u) \hat{\eta}_{0,m}(du,x)\hat{p}_{m}(x)dx \right\vert\\
&\quad\equiv \left\vert I_1 \right\vert + \left\vert I_2 \right\vert.
\end{align*}
Now set 
\begin{equation}
\vartheta = \min\left\{\frac{\epsilon}{8\|c_0\|_\infty\max\{1,(e_r-e_l)\}},\frac{3 \epsilon \max\{\bar{A},1\}}{8\|c_0\|_\infty}\right\}. 
\end{equation}
By Lemma \ref{analysis:inft_bss_bcs:bound_on_correction} we can choose an $m_2\geq m_1$ such that
for all $m\geq m_2$, there is a function 
$\tilde{p}_{m}=\sum_{i=0}^{2^{m}-1}\tilde{\gamma}_ip_i \in \Span\{p_0,p_1,\ldots,p_{2^{m}-1} \}$ that allows for a solution 
$\tilde{y}$ to $C^{(m)}y=-d^{(m)}(\tilde{p}_{m})$ with $\|\tilde{y}\|_\infty\leq\hat{\vartheta}<\vartheta$ for some $\hat{\vartheta}<\vartheta$,
but $\tilde{p}_{m}\geq\hat{\vartheta}$. This
$m_2$ is also large enough to approximate $p$ by $\tilde{p}_{m}$ with an accuracy of 
$\frac{\epsilon}{8\|c_0\|_\infty}$ for all $m\geq m_2$. 
Define new coefficients $\gamma_i = \tilde{\gamma}_i - \tilde{y}_i$ and set 
$\hat{p}_{m}=\sum_{i=0}^{2^{m}-1}\gamma_ip_i$. Then, for all $k=1,2,\ldots,n$,
\begin{align*}
d^{(m)}_k(\hat{p}_{m}) &= Rf_k - \left(C^{(m)}\gamma \right)_k - \int_E \int_U Bf_k(x,u)\hat{\eta}_{1,m}\mu_{1,E}(dx)\\
&= Rf_k - \left(C^{(m)}(\tilde{\gamma} - \tilde{y}) \right)_k - \int_E \int_U Bf_k(x,u)\hat{\eta}_{1,m}\mu_{1,E}(dx)\\
&= Rf_k - \left(C^{(m)}\tilde{\gamma}\right)_k - \int_E \int_U Bf_k(x,u)\hat{\eta}_{1,m}\mu_{1,E}(dx) + \left(C^{(m)}\tilde{y}\right)_k\\
&= d^{(m)}_k(\tilde{p}_{m}) - d^{(m)}_k(\tilde{p}_{m}) =0 
\end{align*}
and
\begin{equation*}
d^{(m)}_{n+1}(\hat{p}_{m}) =
1-(C^{(m)}\tilde{\gamma})_{n+1} + (C^{(m)}\tilde{y})_{n+1}=d^{(m)}_{n+1}(\tilde{p}_{m})-d^{(m)}_{n+1}(\tilde{p}_{m}) = 0
\end{equation*}
which shows that $\hat{p}_{m}$ fulfills
the constraints. But also, $\hat{p}_{m}\geq 0$. So\\
$(\hat{\eta}_{0,m}(du,x)\hat{p}_{m}(x)dx,\hat{\eta}_{0,m}(du,x)\mu_{1,E}(dx)\in \mathscr{M}^l_{n,m}$. Furthermore,
\begin{equation*}
\|p-\hat{p}_{m}\|_{L^1(E)} \leq \|p-\tilde{p}_m\|_{L^1(E)} + \|\tilde{p}_m-\hat{p}_m\|_{L^1(E)} \leq \frac{\epsilon}{8\|c_0\|_\infty} + \frac{\epsilon}{8\|c_0\|_\infty} = \frac{\epsilon}{4\|c_0\|_\infty}.
\end{equation*}
This shows that 
\begin{align*}
\left\vert I_2 \right\vert \quad\leq \int_E \|c_0\|_\infty\int_U \hat{\eta}_{0,m}(du,x) \vert p(x)-\hat{p}_{m}(x)\vert dx <\|c_0\|_\infty\|p-\hat{p}_{m} \|_{L^1}
<\frac{\epsilon}{4},
\end{align*}
since $\int_U \hat{\eta}_{0,m}(du,x)=1$. Turning to $\left\vert I_1 \right\vert$, we have seen in 
Proposition \ref{convergence:cauchytrick_proposition2}
that there is an $m_3\geq m_2$ such that for all $m\geq m_3$, $\vert I_1 \vert < \frac{\epsilon}{4}$.
To sum up,
\begin{align*}
&\left\vert \int_E\int_U c_0(x,u) \eta_0(du,x)p(x)dx - \int_E\int_U c_0(x,u) \hat{\eta}_{0,m}(du,x)\hat{p}_{m}(x)dx \right\vert\\
&\quad\leq I_1 + I_2\leq \frac{\epsilon}{4} + \frac{\epsilon}{4} = \frac{\epsilon}{2}
\end{align*}
But this gives us the assertion, and finishes the proof, setting $m_0=m_3$.
\end{proof}
\noindent So far, we have investigated the approximation of arbitrary measures. Proposition \ref{convergence:second_part_big_proposition} is
instrumental in proving the next important result, Corollary \ref{convergence:second_part_corollary}, which analyzes how optimal solutions in 
$\mathscr{M}^l_{n,m}$ relate to $\epsilon$-optimal solution in $\mathscr{M}_n^l$.
The following lemma will as well be needed. Its proof is similar to an argument used in (\ref{convergence:proposition1}), and thus is omitted.
\setcounter{thm}{14}
\begin{lem}
\label{convergence:closure_lemma}
Let $\{\mu_{0,n,m}, \mu_{1,n,m}\}$ be a sequence of measures such that for each $m$,
$(\mu_{0,n,m}, \mu_{1,n,m})\in \mathscr{M}^l_{n,m}$. Assume that
$\mu_{0,n,m} \Rightarrow \hat{\mu}_{0,n}$ and $\mu_{1,n,m}\Rightarrow \hat{\mu}_{1,n}$ as $m \rightarrow \infty$.
Then, $(\hat{\mu}_{0,n}, \hat{\mu}_{1,n}) \in \mathscr{M}^l_n$.
\end{lem}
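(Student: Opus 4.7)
The plan is to push the constraint equations and the mass bounds that define $\mathscr{M}^l_n$ through the weak limits, exploiting compactness of $E\times U$ so that all relevant test functions automatically lie in $C^u_b(E\times U)$. The argument is a finite-dimensional specialization of the first half of the proof of Proposition~\ref{convergence:proposition1}, simplified by the fact that here only the $n$ constraints indexed by $f_1,\ldots,f_n$ need to be preserved, so no density-in-$\mathscr{D}_\infty$ approximation is required.

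First I would check that the test functions are admissible for weak convergence. Each $f_k\in C_c^2(E)$ is fixed, and since $b,\sigma$ are continuous on the compact set $E\times U$, the function $Af_k=\tilde{A}f_k-\alpha f_k$ is continuous on $E\times U$. In either of the forms of $B$ listed in (\ref{introduction:notation-and-formalism:form-of-b}), $Bf_k$ is likewise continuous on $E\times U$, once one interprets $f_k$ as extended by zero outside $E$ (consistent with $f_k\in C_c^2(E)$). By compactness, $Af_k$, $Bf_k$, and the constant function $1$ all belong to $C^u_b(E\times U)$.

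Then I would apply the hypothesized weak convergences $\mu_{0,n,m}\Rightarrow\hat{\mu}_{0,n}$ and $\mu_{1,n,m}\Rightarrow\hat{\mu}_{1,n}$ to these test functions. For each $k=1,\ldots,n$, the equality $\int Af_k\,d\mu_{0,n,m}+\int Bf_k\,d\mu_{1,n,m}=Rf_k$ holds for every $m$ because $(\mu_{0,n,m},\mu_{1,n,m})\in\mathscr{M}^l_{n,m}\subset\mathscr{M}^l_n$. Passing $m\to\infty$ in this equation and in the mass relations $\mu_{0,n,m}(E\times U)=1$ and $\mu_{1,n,m}(E\times U)\leq l$ then yields $\int Af_k\,d\hat{\mu}_{0,n}+\int Bf_k\,d\hat{\mu}_{1,n}=Rf_k$ together with $\hat{\mu}_{0,n}\in\mathcal{P}(E\times U)$ and $\hat{\mu}_{1,n}(E\times U)\leq l$, placing $(\hat{\mu}_{0,n},\hat{\mu}_{1,n})$ in $\mathscr{M}^l_n$. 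I do not expect any serious obstacle here: everything rides on compactness of $E\times U$ and continuity of the finitely many test functions, the only step meriting brief care being the continuity of $Bf_k$ in the jump case $Bf(x,u)=f(x+u)-f(x)$, which is handled by the zero-extension of $f_k$ mentioned above.
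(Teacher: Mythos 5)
Your argument is correct and is precisely the one the paper intends: the lemma's proof is omitted with a pointer to the first half of the proof of Proposition \ref{convergence:proposition1}, which is exactly the passage-to-the-limit you carry out, simplified (as you note) because only the finitely many test functions $f_1,\ldots,f_n$ are involved. Your handling of the mass constraints via the test function $1$ and of the continuity of $Bf_k$ in the jump case is also sound.
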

\setcounter{thm}{2}
Now we can prove Corollary \ref{convergence:second_part_corollary} from \Cref{convergence-main-results}.
\begin{crl}
 For each $m\in \mathbb{N}$, assume that $(\mu^*_{0,n,m}, \mu^*_{1,n,m})\in \mathscr{M}^l_{n,m}$ and that for $m\in \mathbb{M}$, 
$(\mu^*_{0,n,m}, \mu^*_{1,n,m})$ is an optimal solution to the $l$-bounded, $(n,m)$-dimensional linear program.
Then, the sequence of numbers $\{J(\mu^*_{0,n,m}, \mu^*_{1,n,m})\}_{m\in\mathbb{N}}$\\
converges to $J^*_n= \inf_{(\mu_{0,n}, \mu_{1,n})\in \mathscr{M}^l_n} J(\mu_{0,n}, \mu_{1,n})$ as $m\rightarrow \infty$.
\end{crl}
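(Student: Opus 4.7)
The plan is to deduce the corollary directly from Proposition \ref{convergence:second_part_big_proposition} by a standard $\epsilon/2$ sandwich argument, exploiting the set inclusion $\mathscr{M}^l_{n,m}\subset \mathscr{M}^l_n$ already noted at the end of \Cref{discretization}.

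First, I would establish the lower bound. Because $\mathscr{M}^l_{n,m}\subset \mathscr{M}^l_n$, every feasible pair of the $(n,m)$-dimensional program is also feasible for the $(n,\infty)$-dimensional program, so the definition of $J^*_n$ as an infimum yields
\begin{equation*}
J(\mu^*_{0,n,m},\mu^*_{1,n,m})\;\geq\; J^*_n \qquad \text{for every } m\in\mathbb{N}.
\end{equation*}
This holds regardless of $m$ and requires no analytic work.

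Next, for the upper bound, I would fix $\epsilon>0$ and use the definition of the infimum $J^*_n$ to select an $\epsilon/2$-optimal pair $(\tilde{\mu}_0,\tilde{\mu}_1)\in\mathscr{M}^l_n$ with $J(\tilde{\mu}_0,\tilde{\mu}_1)\leq J^*_n+\epsilon/2$. I would then apply Proposition \ref{convergence:second_part_big_proposition} to this particular pair with accuracy $\epsilon/2$, obtaining some $m_0$ such that for every $m\geq m_0$ there is a pair $(\hat{\mu}_{0,m},\hat{\mu}_{1,m})\in\mathscr{M}^l_{n,m}$ with
\begin{equation*}
\bigl|J(\tilde{\mu}_0,\tilde{\mu}_1)-J(\hat{\mu}_{0,m},\hat{\mu}_{1,m})\bigr|<\tfrac{\epsilon}{2}.
\end{equation*}
Optimality of $(\mu^*_{0,n,m},\mu^*_{1,n,m})$ within $\mathscr{M}^l_{n,m}$ then gives
\begin{equation*}
J(\mu^*_{0,n,m},\mu^*_{1,n,m}) \;\leq\; J(\hat{\mu}_{0,m},\hat{\mu}_{1,m}) \;<\; J(\tilde{\mu}_0,\tilde{\mu}_1)+\tfrac{\epsilon}{2} \;\leq\; J^*_n+\epsilon.
\end{equation*}

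Combining both bounds shows that for every $\epsilon>0$ there is an $m_0$ with $J^*_n \leq J(\mu^*_{0,n,m},\mu^*_{1,n,m}) \leq J^*_n+\epsilon$ for all $m\geq m_0$, which is exactly the claimed convergence. I do not anticipate any serious obstacle in this proof since Proposition \ref{convergence:second_part_big_proposition} already encapsulates the delicate approximation work (the density adjustment via Lemma \ref{analysis:inft_bss_bcs:bound_on_correction} and the Cauchy-in-$k_m$ argument of Proposition \ref{convergence:cauchytrick_proposition2}); the only mild point of care is that Proposition \ref{convergence:second_part_big_proposition} must be applied to a \emph{nearly} optimal feasible pair in $\mathscr{M}^l_n$ rather than to an exact minimizer, which may not exist, so the sandwich must absorb two $\epsilon/2$ terms.
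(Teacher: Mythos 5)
Your proposal is correct, and it reaches the conclusion by a genuinely different and more elementary route than the paper. The paper's proof works at the level of the measures: it first argues (by contradiction, using Proposition \ref{convergence:second_part_big_proposition}) that any weak limit point of the discrete optimizers must attain $J^*_n$, then asserts that $\{J(\mu^*_{0,n,m},\mu^*_{1,n,m})\}_m$ is a decreasing bounded sequence and hence convergent, extracts a weakly convergent subsequence of the optimizers via Theorem \ref{introduction:prokhorov}, invokes Lemma \ref{convergence:closure_lemma} to place the limit in $\mathscr{M}^l_n$, and identifies the limit of the values along that subsequence with $J^*_n$. You instead stay entirely at the level of the cost values: the inclusion $\mathscr{M}^l_{n,m}\subset\mathscr{M}^l_n$ gives the lower bound for free, and a single application of Proposition \ref{convergence:second_part_big_proposition} to an $\epsilon/2$-near-minimizer of $J^*_n$ gives the upper bound, so no compactness, no closure lemma, and no monotonicity-in-$m$ claim (which the paper uses but does not really justify, as it requires the nestedness $\mathscr{M}^l_{n,m}\subset\mathscr{M}^l_{n,m+1}$) are needed. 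What the paper's longer route buys is extra information: it produces an actual weak limit $(\hat{\mu}^*_{0,n},\hat{\mu}^*_{1,n})\in\mathscr{M}^l_n$ attaining $J^*_n$, i.e.\ subsequential convergence of the optimizing measures themselves, which is of independent interest for recovering the optimal control, whereas your argument only controls the scalar sequence, which is all the corollary actually asserts. One caveat applies equally to both proofs: Proposition \ref{convergence:second_part_big_proposition} implicitly requires the near-optimal pair in $\mathscr{M}^l_n$ to satisfy the standing assumptions of \Cref{assumptions} (existence of a density $p$ vanishing only on a null set, a.e.\ continuity of $x\mapsto\eta_0(V,x)$), so you are not introducing any gap the paper does not already have.
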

\begin{proof} 
First, observe that if $\mu^*_{0,n,m} \Rightarrow \hat{\mu}^*_{0,n}$ and 
$\mu^*_{1,n,m}\Rightarrow \hat{\mu}^*_{1,n}$ as $m \rightarrow \infty$ for some 
$(\hat{\mu}^*_{0,n}, \hat{\mu}^*_{1,n})\in \mathscr{M}^l_n$, it follows that $J(\hat{\mu}^*_{0,n}, \hat{\mu}^*_{1,n}) = J^*_n$.
The proof of this claim is as follows. Assume the opposite. Then, there is a pair of measures 
$(\hat{\mu}_{0,n}, \hat{\mu}_{1,n})\in \mathscr{M}^l_n$ with 
$J(\hat{\mu}^*_{0,n}, \hat{\mu}^*_{1,n})-J(\hat{\mu}_{0,n}, \hat{\mu}_{1,n})>\epsilon$ for some $\epsilon >0$.
Select $m_0$ large enough such that for all $m\geq m_0$, 
$\vert J(\hat{\mu}^*_{0,n}, \hat{\mu}^*_{1,n}) - J(\mu^*_{0,n,m}, \mu^*_{1,n,m})\vert <\frac{\epsilon}{2}$ and hence
$J(\mu^*_{0,n,m}, \mu^*_{1,n,m})-J(\hat{\mu}_{0,n}, \hat{\mu}_{1,n})>\frac{\epsilon}{2}$ for all $m\geq m_0$.
By Proposition \ref{convergence:second_part_big_proposition}, select $m\geq m_0$ large enough  that there is a a pair of measures
$(\hat{\mu}_{0,n,m}, \hat{\mu}_{1,n,m})\in \mathscr{M}^l_{n,m}$ with 
$\vert J(\hat{\mu}_{0,n}, \hat{\mu}_{1,n}) - J(\hat{\mu}_{0,n,m}, \hat{\mu}_{1,n,m})\vert <\frac{\epsilon}{2}$. But then,
$J(\hat{\mu}_{0,n,m}, \hat{\mu}_{1,n,m})<J(\mu^*_{0,n,m}, \mu^*_{1,n,m})$, contradicting that $(\mu^*_{0,n,m}, \mu^*_{1,n,m})$ is the 
optimal solution in $\mathscr{M}^l_{n,m}$.\\
Also, $\{J(\mu^*_{0,n,m}, \mu^*_{1,n,m})\}_{m\in\mathbb{N}}$ is a decreasing sequence which is bounded from below, 
so it converges. As $\{\mu^*_{0,n,m}\}_{m\in\mathbb{N}}$ and $\{\mu^*_{1,n,m}\}_{m\in\mathbb{N}}$ are sequences of measures over a compact space, they are tight,
and the full mass of $\{\mu^*_{1,n,m}\}_{m\in\mathbb{N}}$ is uniformly bounded by $l$. So there is a convergent subsequence 
$\{(\mu^*_{0,n,{m_1}}, \mu^*_{1,n,m_1})\}_{m_1\in\mathbb{N}}$ with
\begin{equation}
\mu^*_{0,n,{m_1}} \Rightarrow \hat{\mu}^*_{0,n} \qquad\mbox{and}\qquad \mu^*_{1,n,{m_1}} \Rightarrow \hat{\mu}^*_{1,n}
\end{equation}
for some $(\hat{\mu}^*_{0,n},\hat{\mu}^*_{1,n})\in\mathscr{M}_n$. By the first part of this proof and because $c_0$ and $c_1$ are
bounded and uniformly continuous,
\begin{align*}
J^*_n = J(\hat{\mu}_{0,n}^*,\hat{\mu}_{1,n}^*) &= \int c_0 d\hat{\mu}^*_{0,n} + \int c_1 d\hat{\mu}^*_{1,n}\\
&= \lim_{m_1\rightarrow \infty}\left( \int c_0 d\hat{\mu}^*_{0,n,{m_1}} + \int c_1 d\hat{\mu}^*_{1,n,{m_1}}\right)\\
&= \lim_{m_1\rightarrow \infty} J(\mu^*_{0,n,{m_1}}, \mu^*_{1,n,{m_1}}),
\end{align*}
but $\{J(\mu^*_{0,n,m}, \mu^*_{1,n,m})\}_{m\in\mathbb{N}}$ converges, and any subsequence has to converge to its very limit. So,
\begin{equation*}
\lim_{m\rightarrow \infty} J(\mu^*_{0,n,m}, \mu^*_{1,n,m}) = J^*_n
\end{equation*}
\end{proof}

\section{Examples}
\label{example-section}
\subsection{Modified Bounded Follower}
\label{mbf-subsection}
\label{example}
Consider a stochastic control problem with state space $E=[0,1]$ such that the process is governed by the SDE
\begin{equation*}
dX_t =  u(X_t) dt + \sigma dW_t +d\xi_t,\quad X_0 = x_0
\end{equation*}
in which $u(x)\in U=[-1,1]$, and $\xi$ is a process that captures the singular behavior of $X$. The latter is given by a 
reflection to the right at $\{0\}$ and a jump from $\{1\}$ to $\{0\}$.
We use the relaxed martingale formulation, compare Definition \ref{introduction:def-relaxed-mgp}, and retain the coefficient functions $b(x,u) = u$ and
$\sigma(x,u)\equiv \sigma$. We adopt the long-term average cost criterion, with cost functions $c_0(x,u) = x^2$, $c_1(e_r,u) \equiv c_1$ at the
right endpoint for some
$c_1\in \mathbb{R}_+$ and $c_1(e_l,u)=0$ at the left endpoint. This problem is known as the modified bounded follower in the literature. 
According to \cite{helmes-stockbridge-oc}, the optimal control for this problem is a degenerate relaxed control $\eta_0$ with 
$\eta_0(\{u_a(x)\},x)=1$, where $u_a$ is of the form
\begin{equation*}
u_a(x) = \left\{ \begin{array}{ll} -1 & x< a \\ +1 & x\geq a.\\ \end{array}\right.
\end{equation*}
The `switching point' $a$ depends on the coefficient  and cost functions. Furthermore, the state space marginal 
$\mu_{0,E}$ under the optimal control has the density 
\begin{equation*}
p_a(x) = \frac{\int_x^1\exp\left(\int_x^y-\frac{2}{\sigma^2}u_a(z)\,dz\right)\,dy}{\int_0^1\int_x^1\exp\left(\int_x^y-\frac{2}{\sigma^2}u_a(z)\,dz\right)\,dy\,dx}.
\end{equation*}
We will compare the performance of the proposed numerical method against this analytic solution. \Cref{example:configuration} shows the configuration of 
the problem, along with the optimal switching point $a$ under this configuration and the weights of the occupation measure $\mu_1$, capturing the singular 
behavior on the left boundary $\{0\}$ and on the right boundary $\{1\}$, denoted $w_1$ and $w_2$, respectively. It also shows the value of the
cost criterion $J^*$ under the optimal control.
\begin{table}[ht]
\caption{Configuration and analytic solution, modified bounded follower}
\label{example:configuration}
 \centering
 \begin{tabular}{ccccccc}
 \hline
 \hline
  $x_0$ & $\sigma$ & $c_1$ & $a$ & $w_1$ & $w_2$ & $J^*$\\
  $0.1$ & $\sqrt{2}$ & $0.01$ & $0.7512$ & $2.4659$ & $1.5555$ & $0.1540$\\
  \hline
  \hline
 \end{tabular}
\end{table}
\begin{figure}[t]
    \centering
    \begin{minipage}{0.46\textwidth}
        \centering
	\includegraphics[width=1\textwidth]{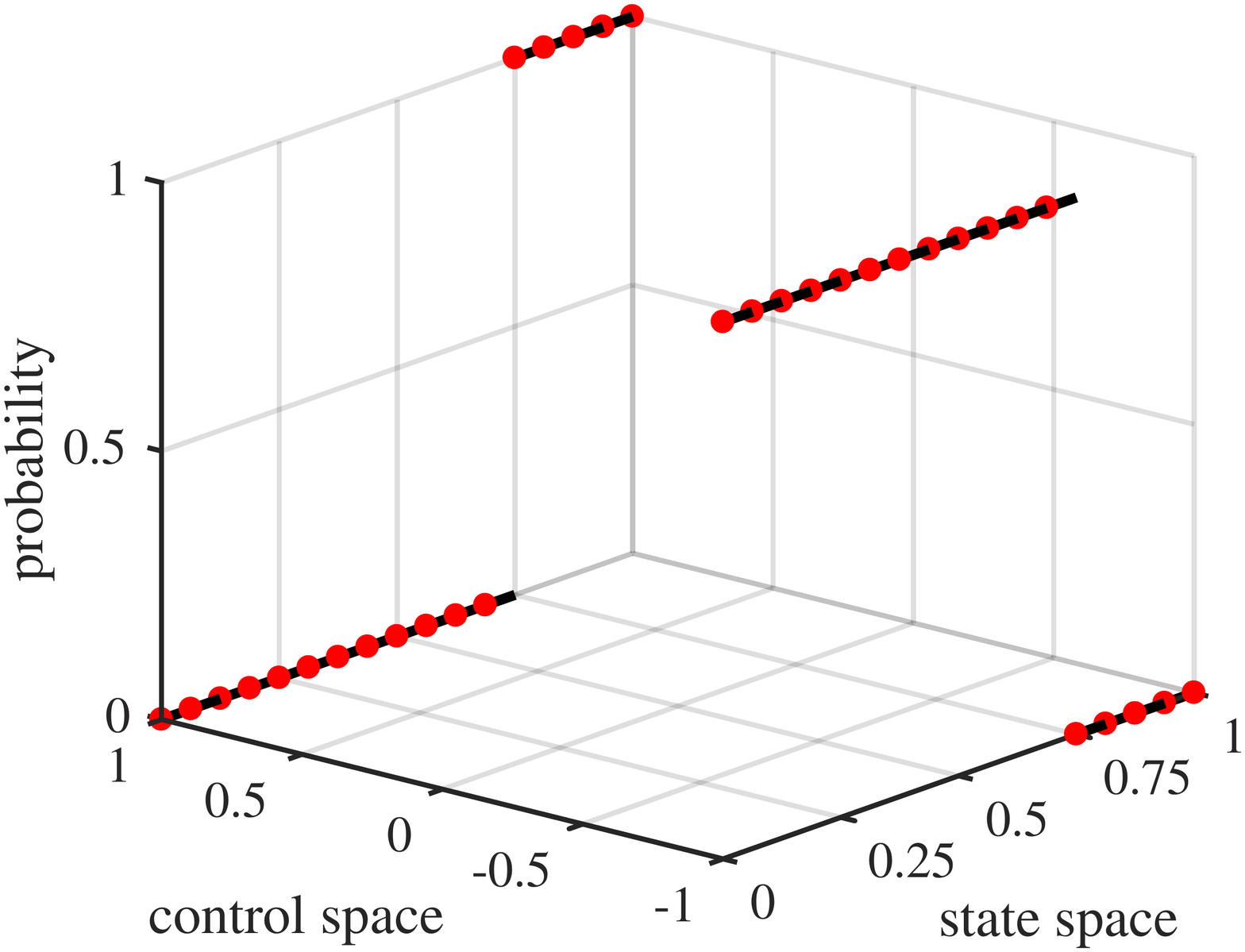}
        \caption{Computed optimal relaxed control, coarse grid, modified bounded follower}
        \label{example:plot_mbf_relaxed_control_coarse}
    \end{minipage}\hfill
    \begin{minipage}{0.46\textwidth}
        \centering
	\includegraphics[width=1\textwidth]{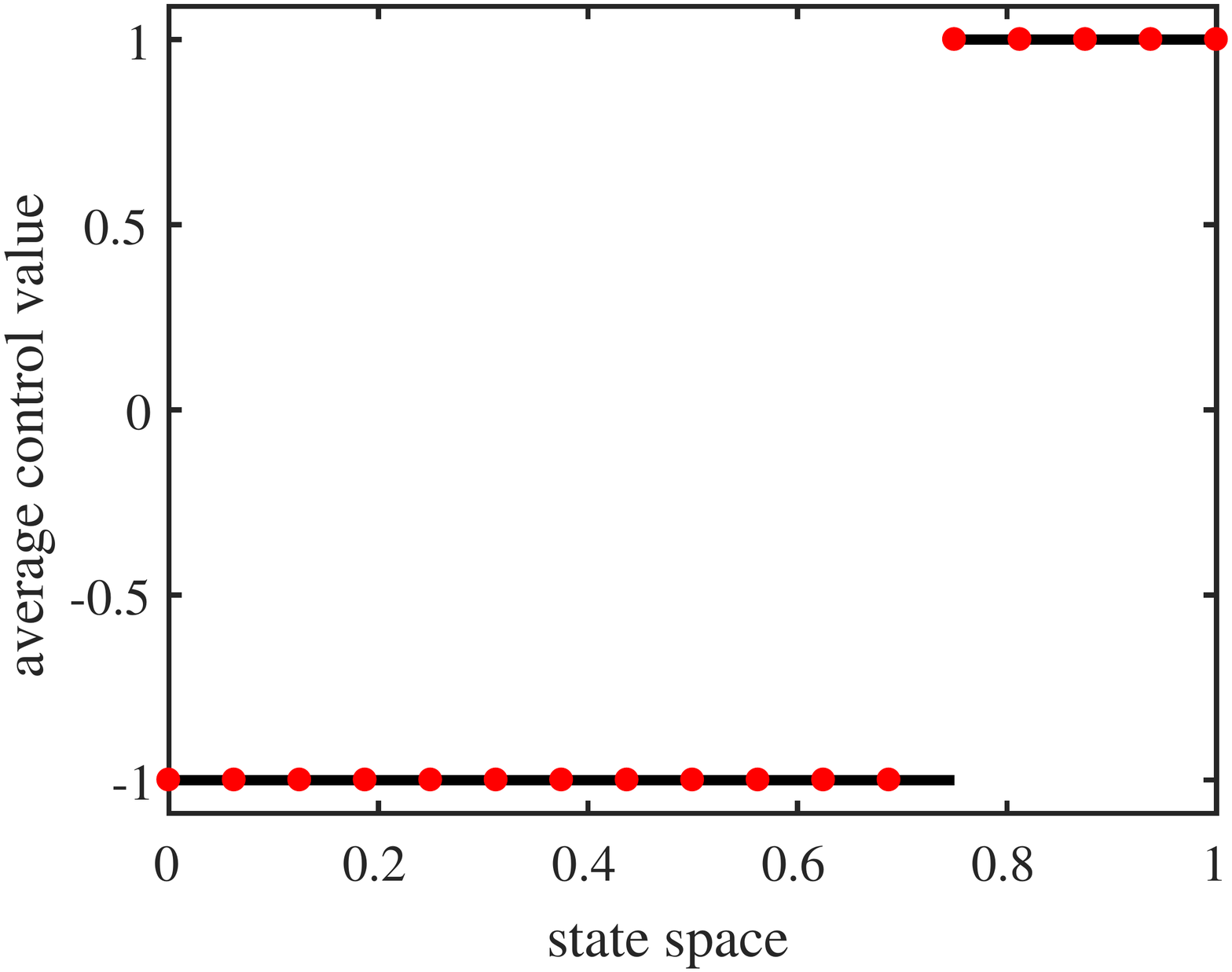}
        \caption{Computed optimal average control, coarse grid, modified bounded follower}
        \label{example:plot_mbf_average_control_coarse}
    \end{minipage}
\end{figure}
\begin{table}[ht]
\caption{Results (1), modified bounded follower}
\label{example:results1}
 \centering
 \begin{tabular}{|l|l||l||l|l|l||l|}
    \hline
    $n$ & $m$ & $T$ & $J^*_{n,m}$ & $e_a$ & $e_r$ & $e_{L^1}$\\
    \hline
    $3$ & $3$ & $0.0089$ & $0.15400$ & $9.798\cdot 10^{-6}$ & $6.363\cdot 10^{-5}$ & $9.082\cdot 10^{-2}$\\
    $4$ & $4$ & $0.0091$ & $0.15399$ & $1.199\cdot 10^{-6}$ & $7.789\cdot 10^{-6}$ & $4.563\cdot 10^{-2}$\\
    $5$ & $5$ & $0.0101$ & $0.15399$ & $5.447\cdot 10^{-7}$ & $3.537\cdot 10^{-6}$ & $2.287\cdot 10^{-2}$\\
    $6$ & $6$ & $0.0124$ & $0.15399$ & $4.809\cdot 10^{-7}$ & $3.123\cdot 10^{-6}$ & $1.145\cdot 10^{-2}$\\
    $7$ & $7$ & $0.0173$ & $0.15399$ & $4.713\cdot 10^{-7}$ & $3.061\cdot 10^{-6}$ & $5.73\cdot 10^{-3}$\\
    $8$ & $8$ & $0.0338$ & $0.15399$ & $4.694\cdot 10^{-7}$ & $3.048\cdot 10^{-6}$ & $2.866\cdot 10^{-3}$\\
    $9$ & $9$ & $0.0844$ & $0.15399$ & $3.306\cdot 10^{-7}$ & $2.147\cdot 10^{-6}$ & $1.451\cdot 10^{-3}$\\
   $10$ & $10$ & $0.2754$ & $0.15399$ & $2.655\cdot 10^{-7}$ & $1.724\cdot 10^{-6}$ & $7.258\cdot 10^{-4}$\\
   $11$ & $11$ & $0.9586$ & $0.15399$ & $3.550\cdot 10^{-7}$ & $2.305\cdot 10^{-6}$ & $4.509\cdot 10^{-4}$\\
\hline
 \end{tabular}
\end{table}
\Cref{example:results1} and \Cref{example:results2} show the results and
performance measures for various discretization levels $n$ and $m$.
To achieve higher accuracy, we added another mesh point for the choice of basis functions for $p$ by cutting the interval in the middle of the
state space in half.
As the cost function does not depend on $u$ we expect the optimal solution to be a bang-bang control. Hence it suffices to choose $k_m=0$, which
means the optimization has to choose between two possible control values $\{-1,1\}$.\\
\Cref{example:results1} shows the result for the approximate cost criterion $J^*_{n,m}$. The column $e_a$ refers to the absolute
error between $J^*$ and $J^*_{n,m}$, the column $e_r$ to the relative error  between $J^*$ and $J^*_{n,m}$ and the column $e_{L^1}$ to the $L^1(E)$-distance between $\hat{p}_m$ and $p$.
$T$ is the execution time, which is an average time taken from $1000$ repetitions of the same optimization run.
In \Cref{example:results2}, $\hat{w}_1$ and $\hat{w}_2$ refer to the 
approximate values for $w_1$ and $w_2$.  The discretization levels $m$ for \cref{example:results2} are the same
as in \cref{example:results1} for respective $n$, and as before, $e_a$ refers to the absolute error and $e_r$ refers to the relative
error of these quantities.
\begin{table}[ht]
\caption{Results (2), modified bounded follower}
\label{example:results2}
 \centering
 \begin{tabular}{|l||l|l|l||l|l|l|}
   \hline
    $n$ & $\hat{w}_1$ & $e_a$ & $e_r$ & $\hat{w}_2$ & $e_a$ & $e_r$\\
    \hline
    $3$ & $2.4667$ & $7.887\cdot 10^{-4}$ & $3.199\cdot 10^{-4}$ & $1.5577$ & $2.157\cdot 10^{-3}$ & $1.387\cdot 10^{-3}$\\
    $4$ & $2.4661$ & $1.984\cdot 10^{-4}$ & $8.047\cdot 10^{-5}$ & $1.5560$ & $5.440\cdot 10^{-4}$ & $3.480\cdot 10^{-4}$\\
    $5$ & $2.4659$ & $4.969\cdot 10^{-5}$ & $2.015\cdot 10^{-5}$ & $1.5556$ & $1.355\cdot 10^{-4}$ & $8.700\cdot 10^{-5}$\\
    $6$ & $2.4659$ & $1.243\cdot 10^{-5}$ & $5.040\cdot 10^{-6}$ & $1.5555$ & $3.388\cdot 10^{-5}$ & $2.178\cdot 10^{-5}$\\
    $7$ & $2.4659$ & $3.107\cdot 10^{-6}$ & $1.260\cdot 10^{-6}$ & $1.5555$ & $8.471\cdot 10^{-6}$ & $5.446\cdot 10^{-6}$\\
    $8$ & $2.4659$ & $7.768\cdot 10^{-7}$ & $3.150\cdot 10^{-7}$ & $1.5555$ & $2.118\cdot 10^{-6}$ & $1.362\cdot 10^{-6}$\\
    $9$ & $2.4650$ & $8.577\cdot 10^{-4}$ & $3.478\cdot 10^{-4}$ & $1.5532$ & $2.331\cdot 10^{-3}$ & $1.499\cdot 10^{-3}$\\
   $10$ & $2.4655$ & $4.286\cdot 10^{-4}$ & $1.738\cdot 10^{-4}$ & $1.5543$ & $1.167\cdot 10^{-3}$ & $7.500\cdot 10^{-4}$\\
   $11$ & $2.4631$ & $2.833\cdot 10^{-3}$ & $1.149\cdot 10^{-3}$ & $1.5543$ & $1.165\cdot 10^{-3}$ & $7.489\cdot 10^{-4}$\\
\hline
 \end{tabular}
\end{table}
Note that the method produces already fairly accurate approximations in almost negligible time for $n=5$ or $n=6$. The over-proportional 
increase in computing time for higher discretization levels ($n=10$ and $n=11$) is due to longer execution time of the linear program solver,
and might indicate that the approximate problem is becoming ill-conditioned.
For $n=12$ and $m=12$, no reliable solution could be produced. In this case, the linear programming solver could find
no point satisfying the constraints, which can be circumvented by increasing the discretization level $m$ without increasing the number of
constraints $n$. However, this did not show better performance than the presented cases. The absolute error for $n=11$ is on a comparable level
to results obtained in \cite{rus-thesis}. Both the error of the cost criterion value and the $L^1$-error of the state space density are 
steadily decreasing, which is a strong indication of a convergent method, together with the presented convergence results.
The inferior approximation quality at $n=11$ compared to $n=10$ is believed to be due to the problem becoming ill-conditioned.\\
\begin{figure}[ht]
    \centering
    \begin{minipage}{0.46\textwidth}
        \centering
	\includegraphics[width=1\textwidth]{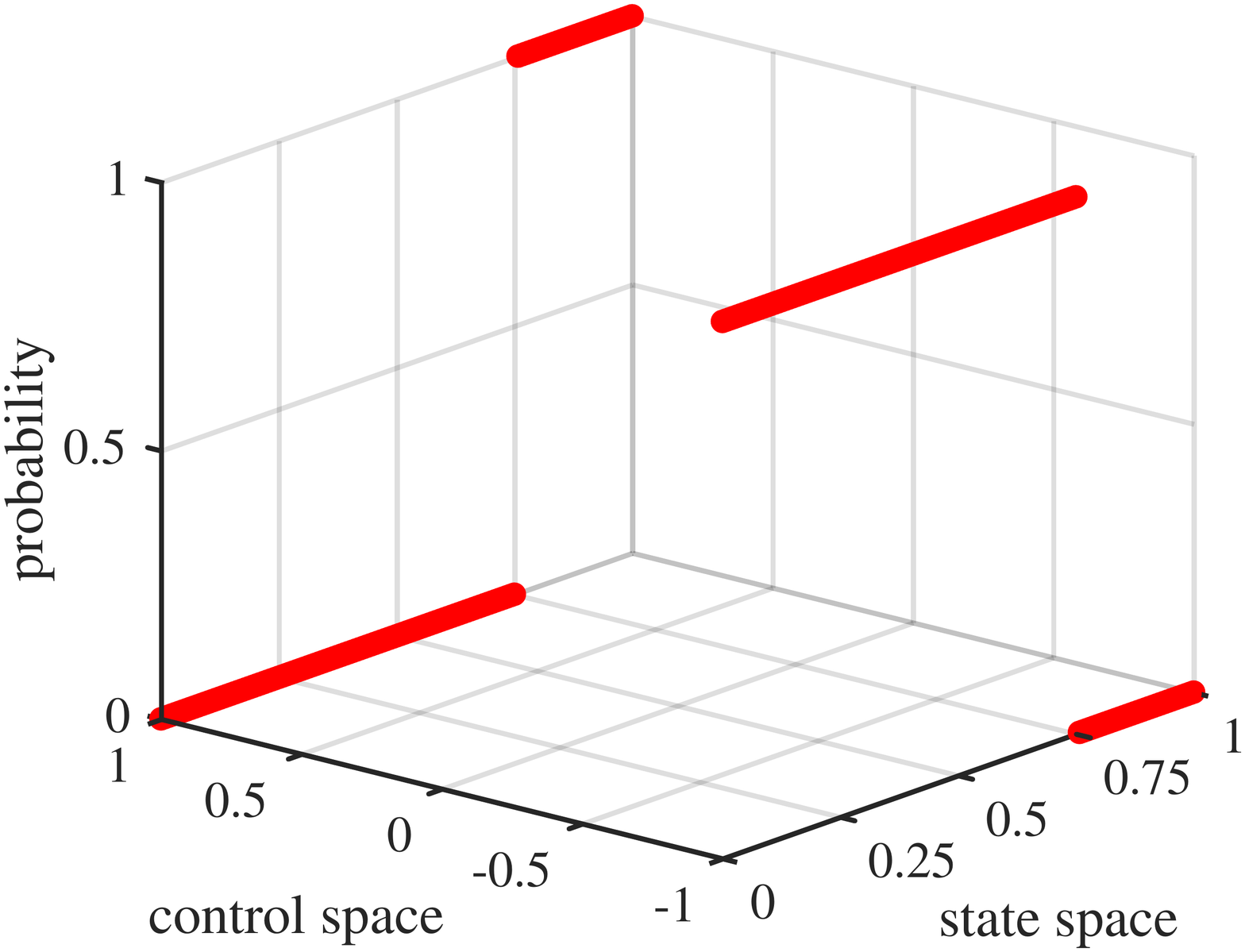}
        \caption{Computed optimal relaxed control, fine grid, modified bounded follower}
        \label{example:plot_mbf_relaxed_control_fine}
    \end{minipage}\hfill
    \begin{minipage}{0.46\textwidth}
        \centering
	\includegraphics[width=1\textwidth]{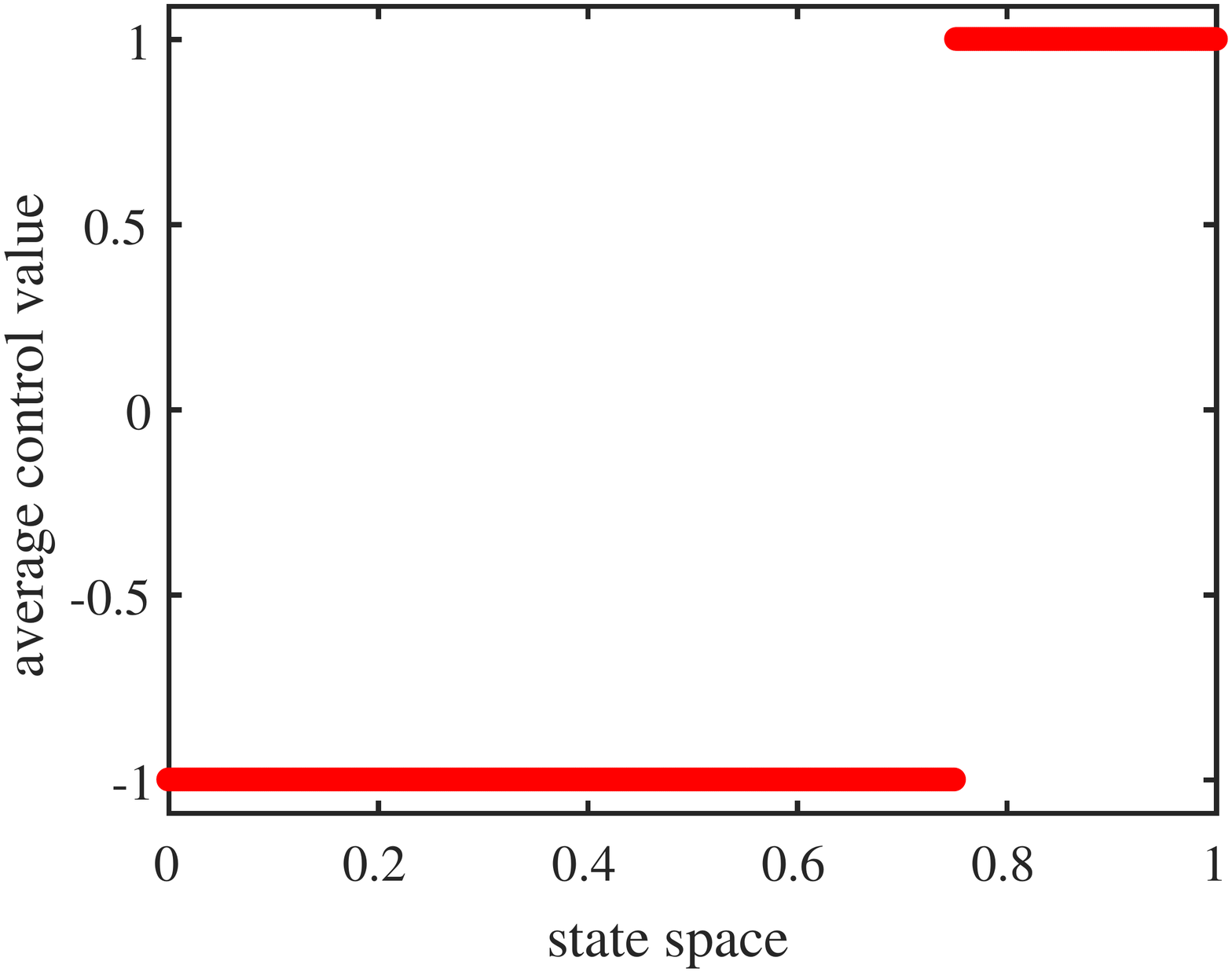}
        \caption{Computed optimal average control, fine grid, modified bounded follower}
        \label{example:plot_mbf_average_control_fine}
    \end{minipage}
\end{figure}
\noindent \Cref{example:plot_mbf_relaxed_control_coarse} shows the computed relaxed control $\hat{\eta}_0$ for $n=4$ and $m=4$. 
\Cref{example:plot_mbf_average_control_coarse} shows the average control value specified by this relaxed control. These figures have to be
understood as follows. \Cref{example:plot_mbf_relaxed_control_coarse} displays the full relaxed control, specifying the probability to pick
a certain control value $u$ when the process is in a certain state $x$. This state $x$ is found on the $x$-axis of the plot, labeled
`state space' and the choice of a control value $u$ corresponds to the $y$-axis of the plot, labeled `control space', while the probability
$\hat{\eta}_0(\{u\},x)$ of
picking this control value $u$ is presented on the $z$-axis, labeled `probability'. For example, at $x=0.25$, the control value $u=-1$ is chosen
with probability $1$, and the control value $u=1$ is chosen with probability $0$. We can see that for any possible value of $x$, $\hat{\eta}_0$
assigns full mass on either one of the two possible control values $u=-1$ and $u=1$. Hence, $\hat{\eta}_0$ can be represented by its average
control function, which is given by $x\mapsto \int_U u \hat{\eta}_0(du,x)$. It is shown in 
\Cref{example:plot_mbf_average_control_coarse}. In both \Cref{example:plot_mbf_relaxed_control_coarse} and
\Cref{example:plot_mbf_average_control_coarse}, the red dots represent the mesh points of the mesh $E^{(m)}$ as defined in 
(\ref{convergence:discrete-E-set-points}).
The switching point $a$ at $x=0.75$, where
the control switches from $-1$ to $+1$ is clearly visible in both figures.\\
The approximate state space density for $n=4$ and $m=4$, as displayed in blue in \Cref{example:plot_mbf_density_coarse}, clearly shows the
features inherited from the piecewise constant basis functions we use to approximate $p$. Its irregular pattern is due to the fact that we
introduced an additional mesh point in the middle of the state space. \Cref{example:plot_mbf_density_coarse} also shows the exact solution
displayed in red.\\
For a finer grid with parameters $n=10$ and $m=10$, \Cref{example:plot_mbf_relaxed_control_fine} shows the computed relaxed control 
$\hat{\eta}_0$. \Cref{example:plot_mbf_average_control_fine} shows the average control function.
The switching point $a$ again is clearly
visible. The red dots indicating the mesh points lie so dense that they form a solid line in both plots. 
\Cref{example:plot_mbf_density_fine} shows the approximate state space density for the parameter choice of $n=10$ and $m=10$. 
The exact solution could not be visually distinguished from the
approximate solution and is thus omitted from the figure. One can also see a change in concavity of the state space density at roughly $x=0.75$,
which is where the control switches its behavior from selecting $u=-1$ to $u=1$.
\begin{figure}[ht]
    \centering
    \begin{minipage}{0.46\textwidth}
        \centering
	\includegraphics[width=1\textwidth]{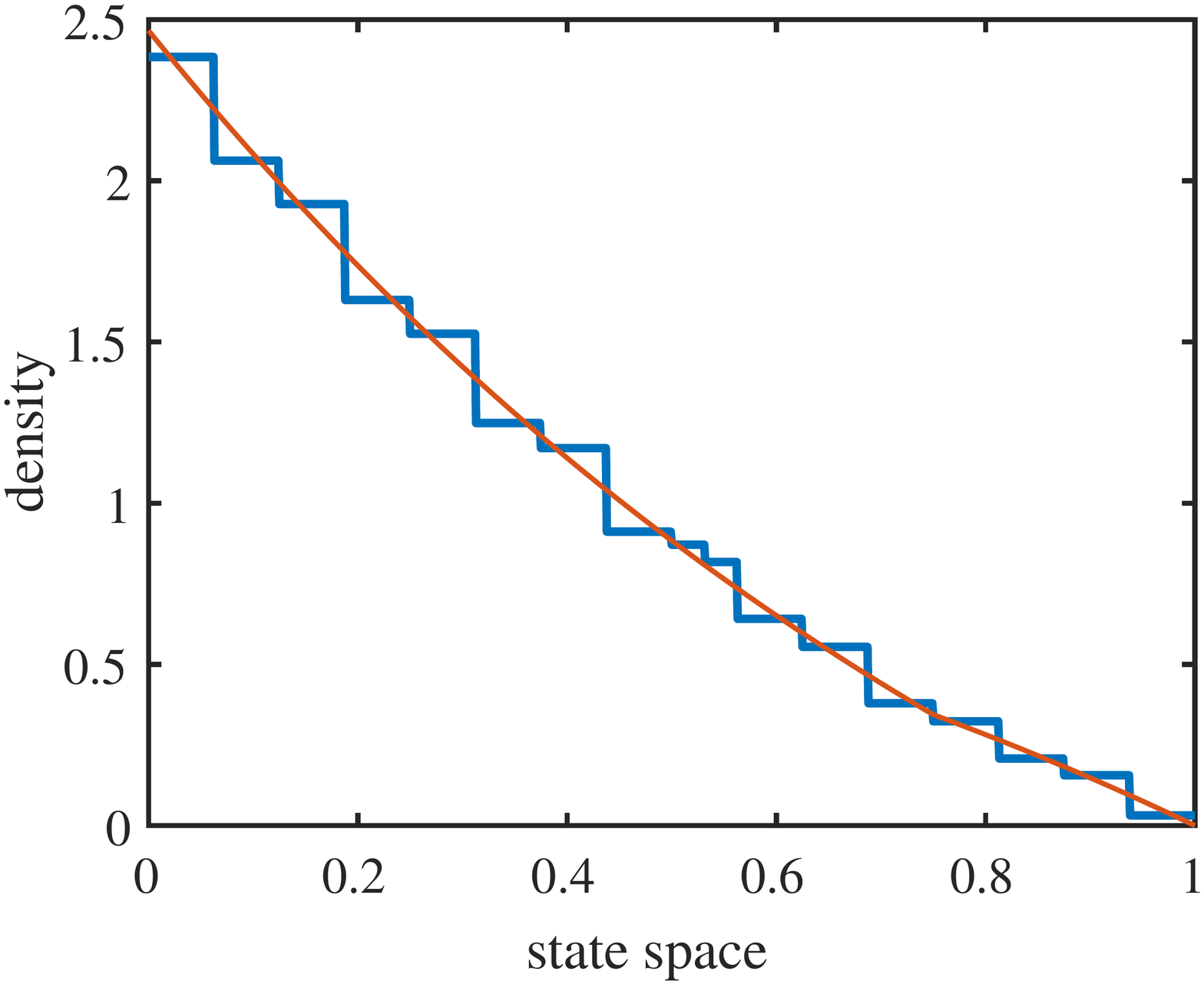}
        \caption{State space density, coarse grid, modified bounded follower}
        \label{example:plot_mbf_density_coarse}
    \end{minipage}\hfill
    \begin{minipage}{0.46\textwidth}
        \centering
	\includegraphics[width=1\textwidth]{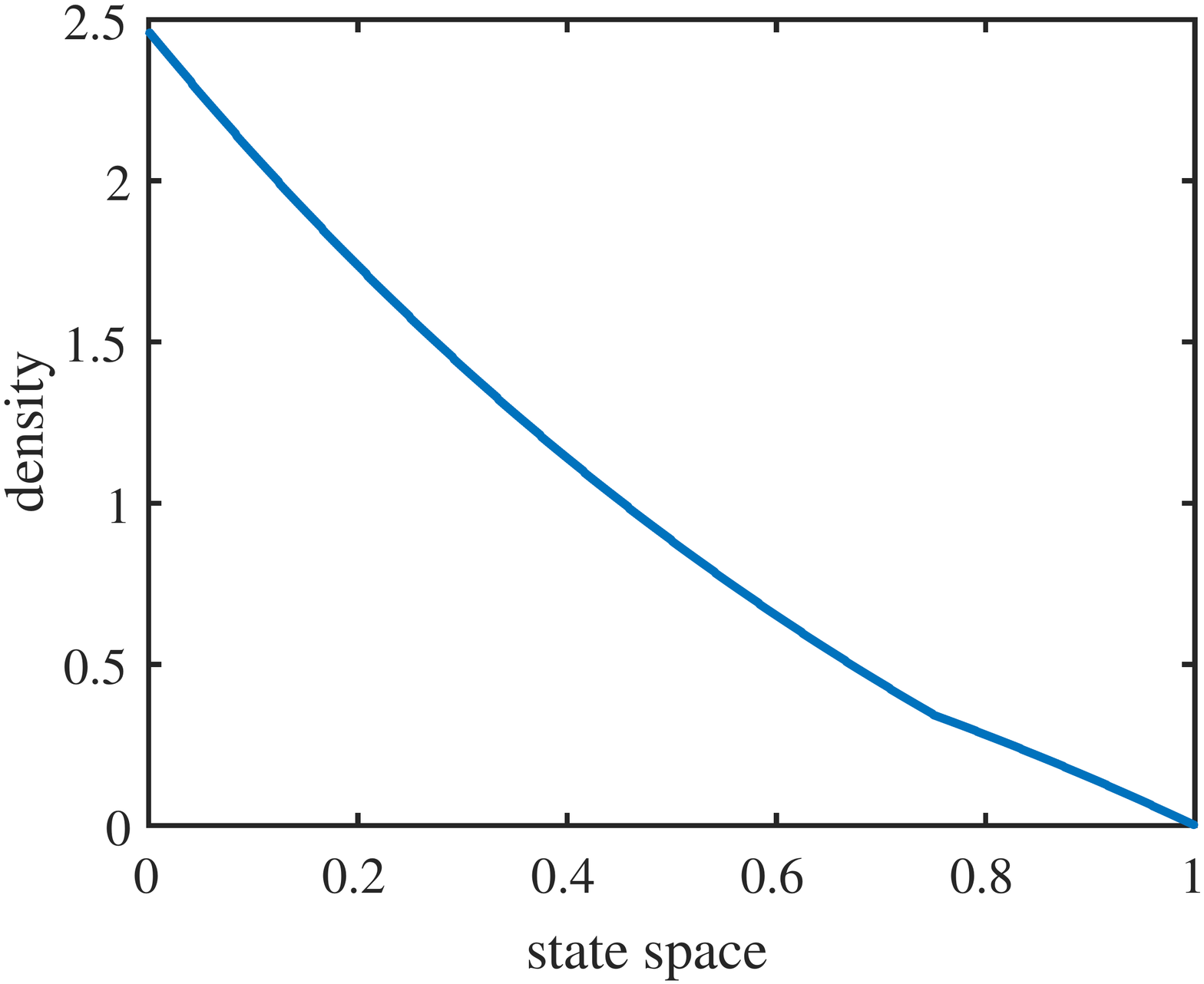}
        \caption{State space density, fine grid, modified bounded follower}
        \label{example:plot_mbf_density_fine}
    \end{minipage}
\end{figure}
\subsection{Simple Particle Problem with Costs of Control}
To illustrate the performance of the numerical method on a different type of problem, consider a stochastic control problem with state space 
$E=[-1,1]$ such that the process is governed by the SDE
\begin{equation*}
dX_t = u(X_t) dt +\sigma dW_t +d\xi_t,\quad X_0=x_0
\end{equation*}
in which $u(x)\in U=[-1,1]$. $\xi$ models reflections at both $-1$ (to the right) and $1$ (to the left), keeping the process inside of $E$. $X$ can be viewed as a particle
that randomly diffusions inside a confined space, and bounces off at the boundaries. Again, we adopt the long-term average cost criterion and
use the relaxed martingale formulation, compare Definition \ref{introduction:def-relaxed-mgp}. We retain the coefficient functions $b(x,u) = u$ and
$\sigma(x,u)\equiv \sigma$. To differentiate this example from the previous one, consider a cost structure given by $c_0 = x^2+u^2$ and
$c_1(x,u) \equiv c_1$ for some $c_1\in \mathbb{R}_+$ at both left endpoint $x=-1$ and right endpoint $x=1$. In particular, this means that using
the control induces a cost. In contrast to the modified bounded follower of \Cref{mbf-subsection}, we will see a different structure of the control since choosing the maximal or minimal control values
might not be optimal any longer, as this introduces additional costs. For this problem, no analytical solution is known to the authors.\\ 
We examine the influence of the cost of the reflection $c_1$ on the optimal
control. All subsequent calculations use $\sigma = \sqrt{2}/2$, $n=9$, $m=9$ and $k_m=m+3=12$. The latter is needed to attain a sufficient
approximation of the cost function, compare (\ref{convergence:discrete-U-bounds}).\\
\begin{figure}[bh]
    \centering
    \begin{minipage}{0.46\textwidth}
        \centering
	\includegraphics[width=1\textwidth]{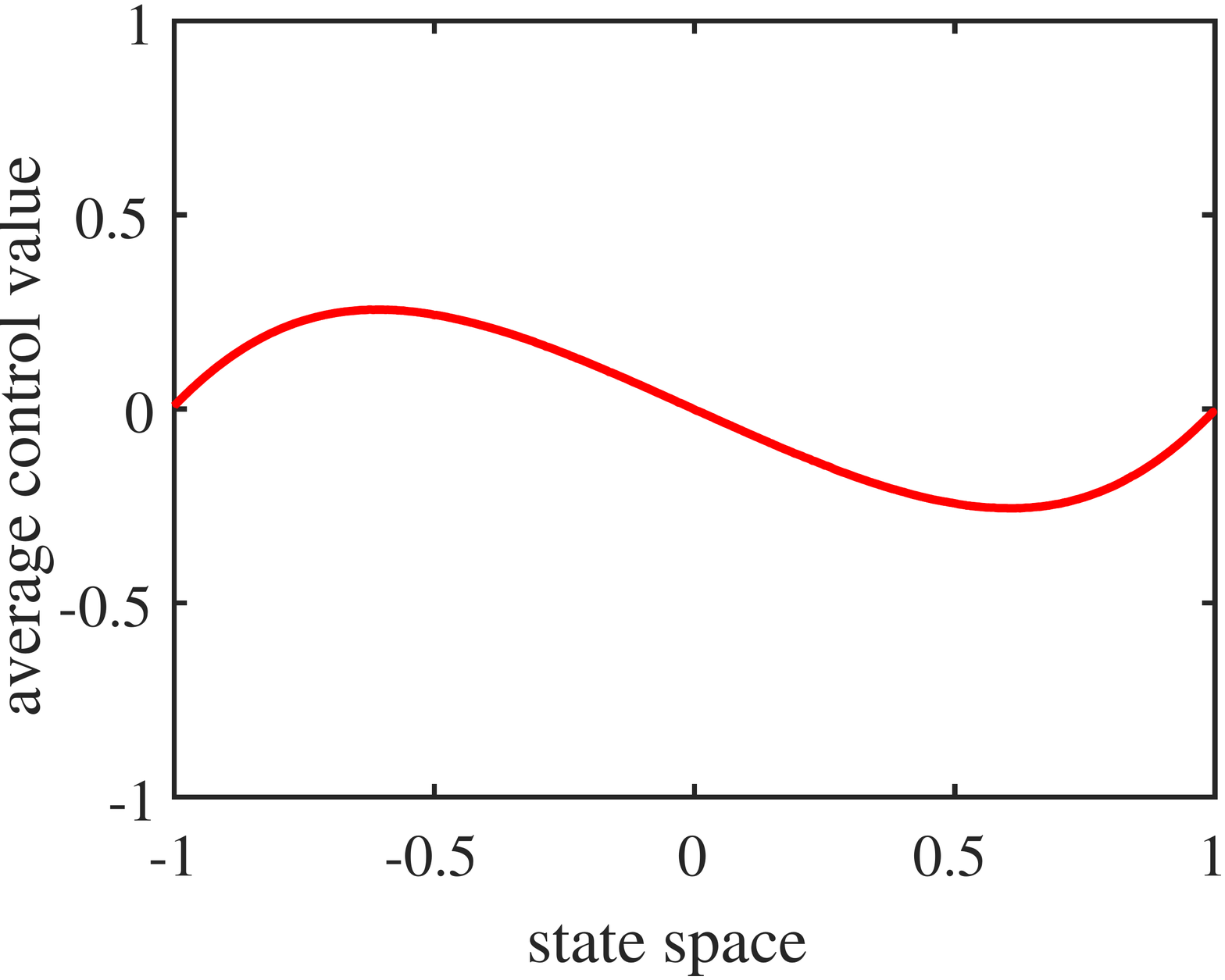}
        \caption{Average of optimal control, $c_1=0.01$, simple particle problem}
        \label{example-spp:plot_control-c_1-0.01}
    \end{minipage}\hfill
    \begin{minipage}{0.46\textwidth}
        \centering
	\includegraphics[width=1\textwidth]{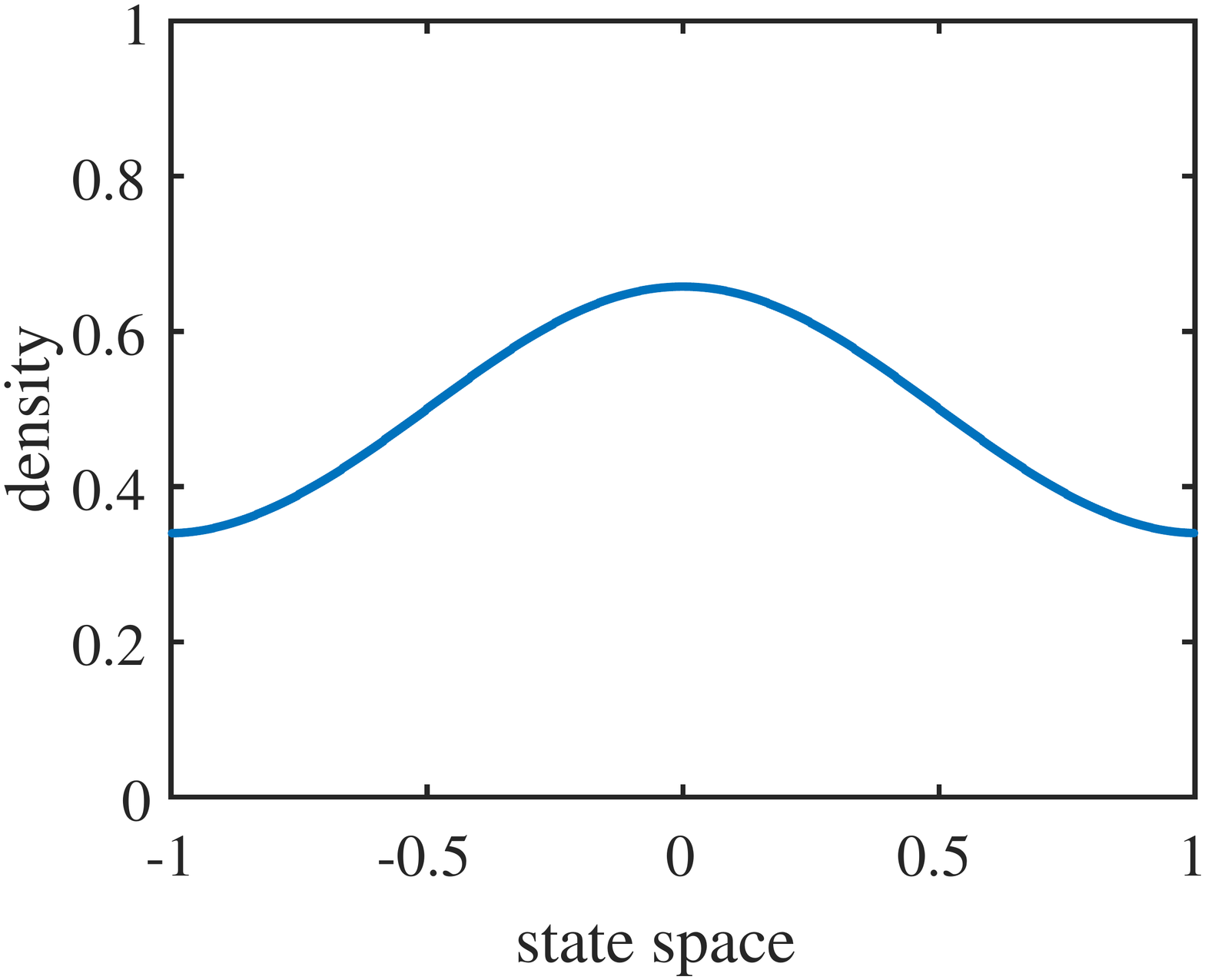}
        \caption{State space density, $c_1=0.01$, simple particle problem}
        \label{example-spp:plot_density-c_1-0.01}
    \end{minipage}
\end{figure}
\noindent \Cref{example-spp:plot_control-c_1-0.01} shows the average control $x\mapsto \int_U u\,\hat{\eta}_0(du,x)$ for a cost of reflections
given by $c_1=0.01$. We chose to show
a plot of the average control function rather than the full relaxed control since the numerical solutions
were degenerate relaxed controls, putting full mass on the values attained by $x\mapsto \int_U u\,\hat{\eta}_0(du,x)$,
with the exception of small 
rounding errors. Moreover, a full visualization of the relaxed control as seen in \Cref{mbf-subsection} is infeasible due to the high number of possible
control values. \Cref{example-spp:plot_density-c_1-0.01} shows the state space density associated with the control of 
\Cref{example-spp:plot_control-c_1-0.01}. The computed optimality criterion is $J^*_{n,m}=0.30259$.\\
\begin{figure}[h]
    \centering
    \begin{minipage}{0.46\textwidth}
        \centering
	\includegraphics[width=1\textwidth]{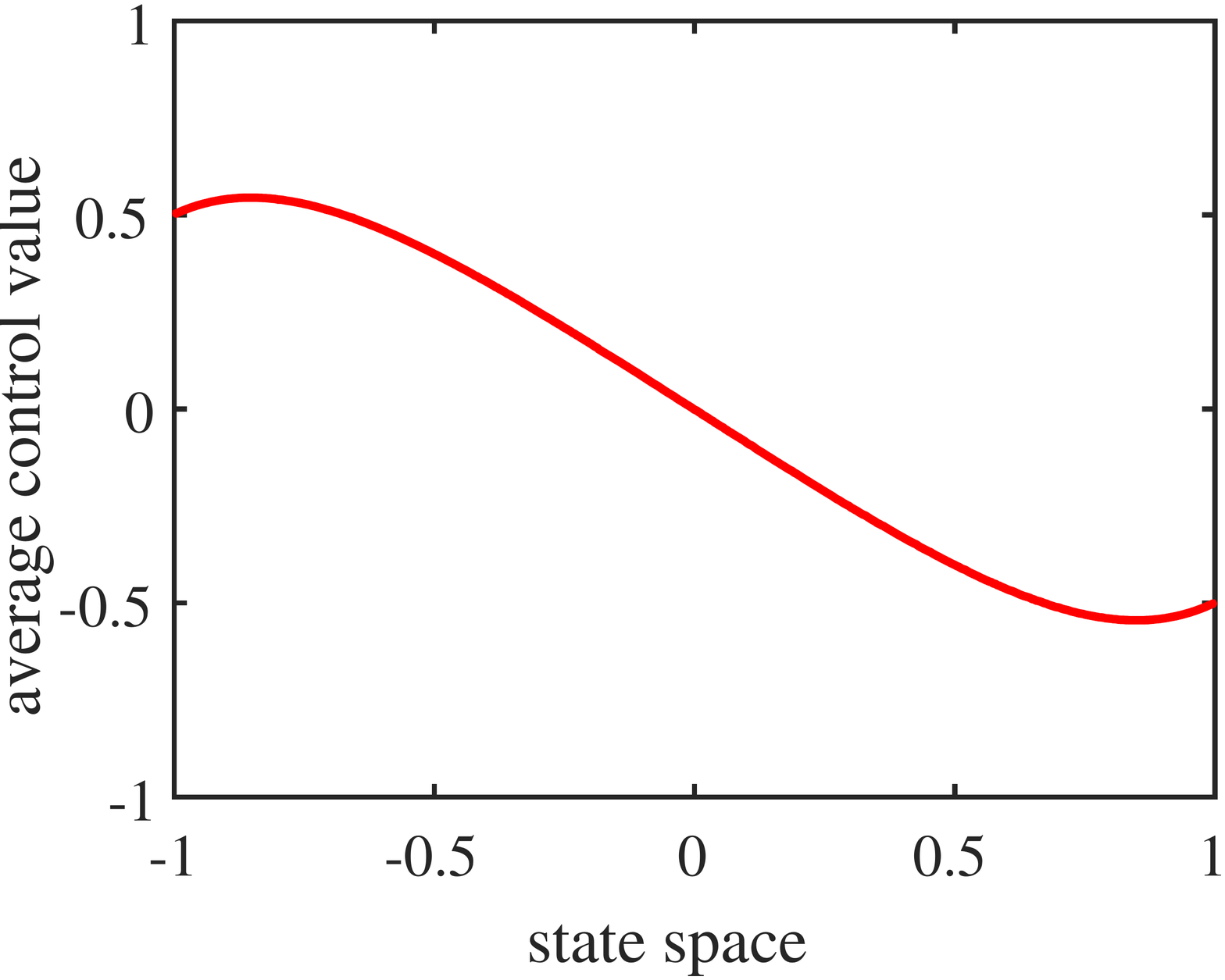}
        \caption{Average of optimal control, $c_1=1$, simple particle problem}
        \label{example-spp:plot_control-c_1-1}
    \end{minipage}\hfill
    \begin{minipage}{0.46\textwidth}
        \centering
	\includegraphics[width=1\textwidth]{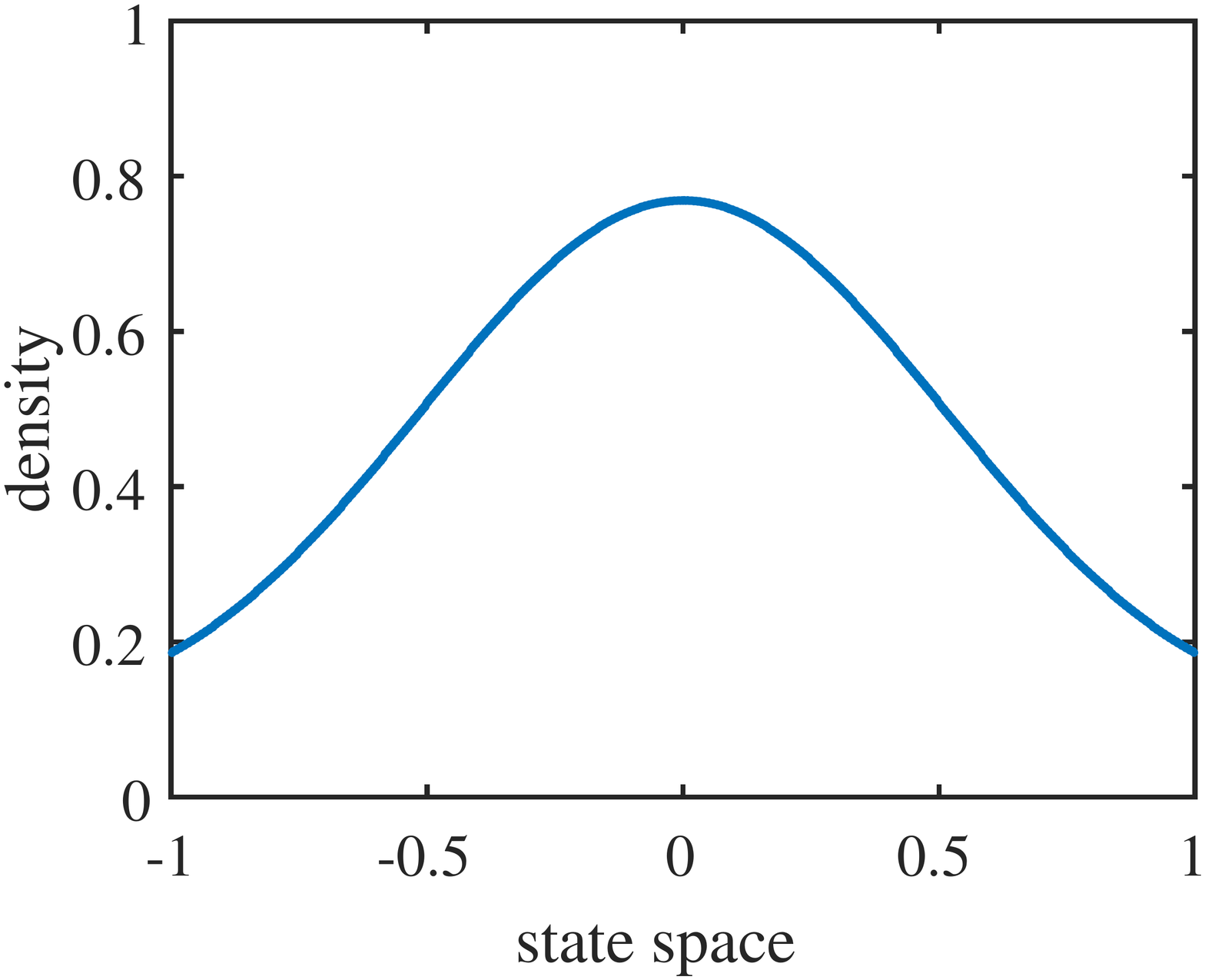}
        \caption{State space density, $c_1=1$, simple particle problem}
        \label{example-spp:plot_density-c_1-1}
    \end{minipage}
\end{figure}
\begin{figure}[h]
    \centering
    \begin{minipage}{0.46\textwidth}
        \centering
	\includegraphics[width=1\textwidth]{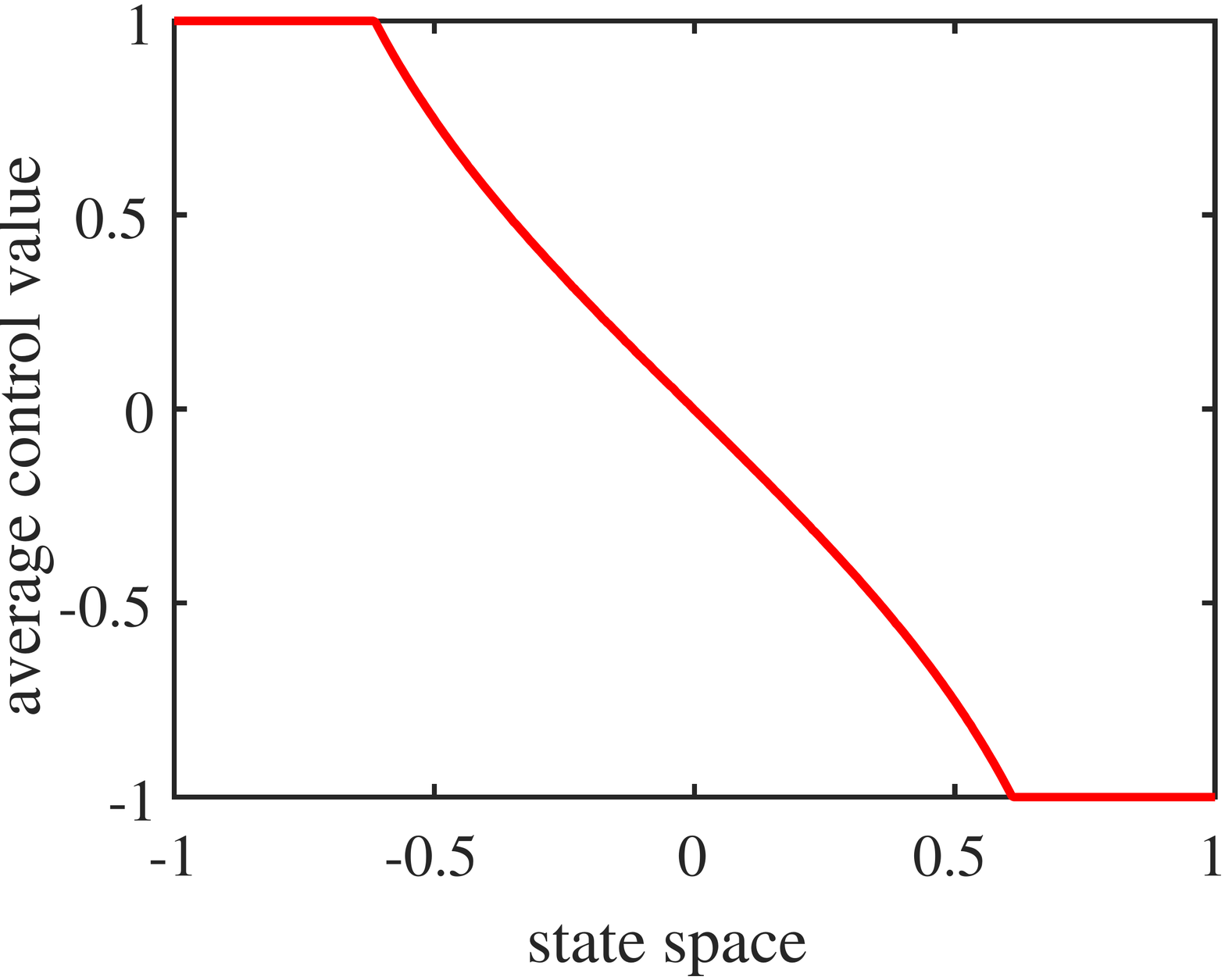}
        \caption{Average of optimal control, $c_1=6$, simple particle problem}
        \label{example-spp:plot_control-c_1-6}
    \end{minipage}\hfill
    \begin{minipage}{0.46\textwidth}
        \centering
	\includegraphics[width=1\textwidth]{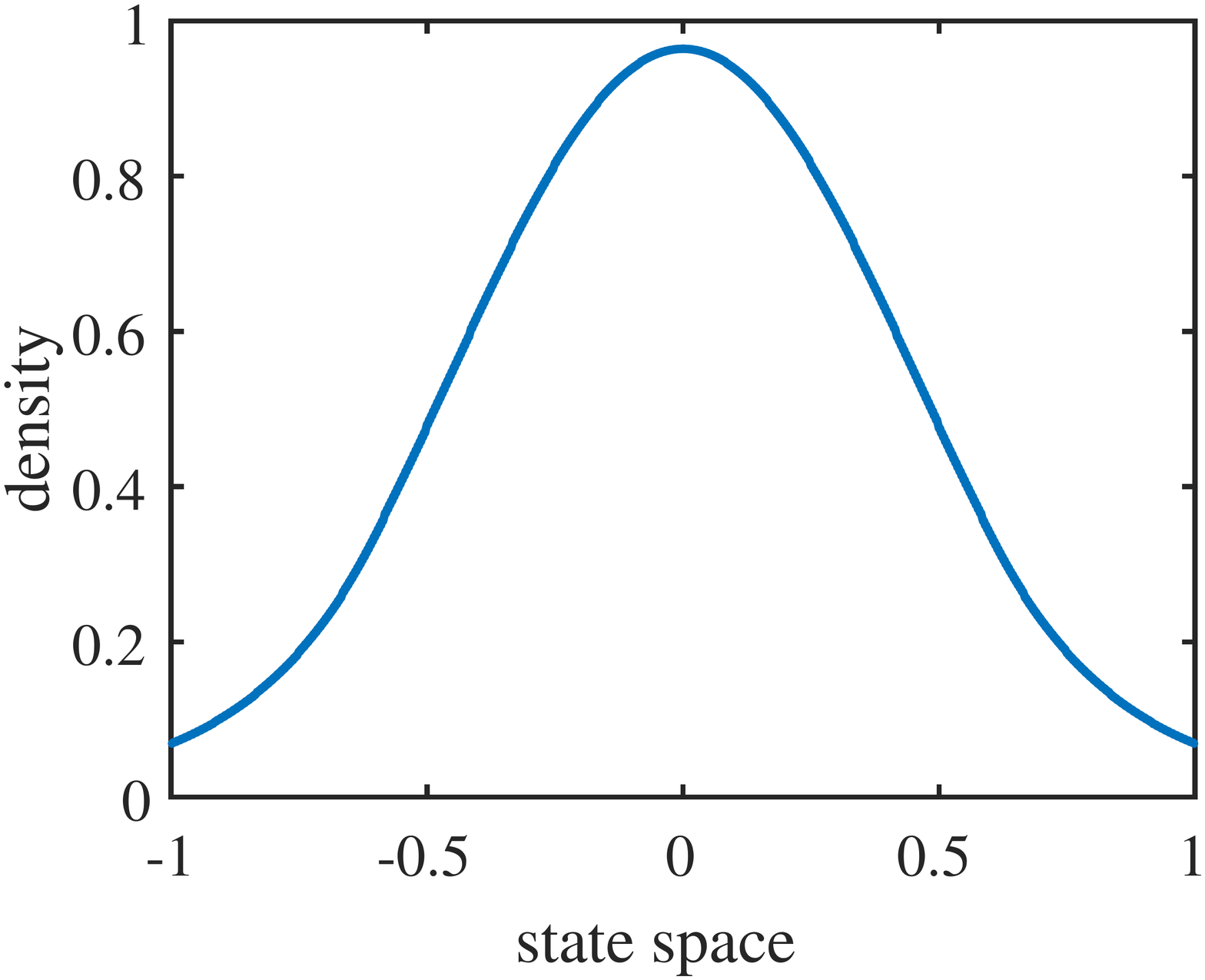}
        \caption{State space density, $c_1=6$, simple particle problem}
        \label{example-spp:plot_density-c_1-6}
    \end{minipage}
\end{figure}
\FloatBarrier
The interesting observation from this simulation is that the optimal control favors using the effect of the reflection to efficiently `push'
the process back into the interior. As the penalty for the reflection with $c_1 = 0.01$ is rather mild compared to the cost of 
using the control at full scale, increasingly less influence is enacted by the control as we move closer to both boundaries $-1$ and $1$.
\noindent \Cref{example-spp:plot_control-c_1-1} shows the optimal control when the costs of the reflection is increased to $c_1=1$. It reveals
that with a higher penalty for the reflection, it is beneficial to use the control more extensively, although a similar pattern as in the 
previous case can be observed when the process approaches the boundaries of the state space. The control is used slightly less in this area
to benefit from the reflection in direction of the origin.
The overall heavier use of the control results in a state space density (\Cref{example-spp:plot_density-c_1-1}) which is more concentrated
around the origin than the one from the previous case, see \Cref{example-spp:plot_density-c_1-0.01}. The value of the optimality criterion 
is given by $J^*_{n,m} = 0.42745$.\\
To illustrate an extreme case, we show a third example with $c_1=6$. \Cref{example-spp:plot_control-c_1-6} shows the optimal control in this setting,
\Cref{example-spp:plot_density-c_1-6} displays the state space density. In contrast to the previous two cases, the optimal control tries to avoid a
reflection under all circumstances by using its full force pushing back to the origin when the process approaches the boundaries
of the state space. Still, a trade-off is made when the process is close to the origin, and the control is used with less than full force to
avoid the costs induced by $c_0$. The state space density concentrates even more around the origin in this setting. 
The value of the cost criterion is given by $J^*_{n,m} = 0.66399$.

\section{Outlook}
The considerations presented in the present paper can be extended in several ways. From a numerical analysis point of view, it is highly
interesting how the numerical scheme behaves if higher-order basis elements are used to approximate the density $p$ of the state space
marginal of the occupation measure $\mu_0$. The analytic solution described in \Cref{mbf-subsection} has a density that is infinitely differentiable everywhere but at one point,
thus justifying the use of, for example, piecewise linear basis functions. However, this would require an adaption of the presented convergence
proof, in particular regarding the analysis leading up to the proof of Lemma \ref{analysis:inft_bss_bcs:bound_on_correction}. One aspect to be addressed is
the fact that as soon as we use standard elements with a order larger than $1$, for example, quadratic Lagrange elements,
the non-negativity of the approximate density cannot be guaranteed by restricting the coefficients to be non-negative.\\
Another topic to research would be the introduction of adaptive meshing techniques for both state and control space. Analytic or heuristic 
error estimator could guide a successive refinement of the meshes, leading to a increase in accuracy without significantly higher
computation time.\\
From a modeling point of view, on the one hand, an adaption of the discretization scheme for models featuring an unbounded state space would
enhance the number of applications for this numerical scheme. Several control problems in finance and economics feature an unbounded state space, and
are well suited for the linear programming approach. Initial investigations show that models with an unbounded state space can be 
approximated using a bounded state space with reflection boundaries. A full analysis of this approach would allow us to use the methods presented
in this paper in order to solve such models.\\
On the other hand, problems with finite time horizon or even optimal stopping problems could also
be solved with similar numerical techniques. While the analytic linear programming approach to address such problems is well studied, the discretization 
techniques presented in this chapter would have to be enhanced to reflect the time dependency of both constraint functions and measures. 
A numerical analysis of such techniques was conducted in \cite{lutz-lookback}, but a convergence analysis remained unconsidered.

\appendix
\section{Additional proofs}
\label{appendix_a}
This appendix provides the proofs of Lemma \ref{analysis:inft_bss_bcs:suitable_approximation},
Lemma \ref{analysis:inft_bss_bcs:bound_on_const_error} and Lemma \ref{analysis:inft_bss_bcs:bound_on_correction}.
\begin{proof}[Proof of Lemma \ref{analysis:inft_bss_bcs:suitable_approximation}]
 Find $\hat{\epsilon}_1<\epsilon$ such that $\lambda\left(\left\{x : p(x) \leq \hat{\epsilon}_1\right\}\right)<\frac{1}{2D_1}$, which is
 possible due to the continuity from above of measures. Define
 \begin{equation*}
 \bar{p}(x) = \begin{cases} 
      p(x) & p(x)>\hat{\epsilon}_1\\
      \hat{\epsilon}_1 & p(x)\leq \hat{\epsilon}_1 \\
   \end{cases}.
 \end{equation*}
 Then, 
 $\|p-\bar{p}\|_{L^1(E)}\leq \hat{\epsilon}_1\cdot \lambda\left(\left\{x : p(x) \leq \hat{\epsilon}_1\right\}\right)\leq \frac{\hat{\epsilon}_1}{2D_1}$.
 Now, choose $m_0$ large enough such that for all $m\geq m_0$, there is a 
 $\tilde{p}_{m}\in \mbox{span}\left(p_0,p_1,\ldots, p_{2^m-1}\right)$ with 
 $\|\bar{p}-\tilde{p}_{m}\|_{L^1(E)}\leq \frac{\hat{\epsilon}_1}{2D_1}$ and $\tilde{p}_{m}\geq\hat{\epsilon}_1$ (note there is no point
 in choosing $\tilde{p}_{m}<\hat{\epsilon}_1$ when approximating $\bar{p}$).
 Then,
 \begin{equation*}
 \|p-\tilde{p}_{m}\|_{L^1(E)} \leq \|p-\bar{p}\|_{L^1(E)} + \|\bar{p}-\tilde{p}_{m}\|_{L^1(E)}< \frac{\hat{\epsilon}_1}{2D_1} + \frac{\hat{\epsilon}_1}{2D_1}
 =\frac{\hat{\epsilon}_1}{D_1}
 \end{equation*}
 holds.
\end{proof}

\begin{proof}[Proof of Lemma \ref{analysis:inft_bss_bcs:bound_on_const_error}]
Fix $\delta>0$. 
 Since $(\mu_0,\mu_1)\in\mathscr{M}_{n,\infty}$, we have that for each $k=1,2,\ldots,n$
 \begin{equation*}
 Rf_k = \int_E\int_U Af_k(x,u)\eta_0(du,x)p(x)dx + \int_E \int_U Bf_k(x,u) \mu_1(dx\times du)
 \end{equation*}
 and thereby for any $\tilde{p}_m$ in the span of $\{p_0,p_1,\ldots,p_{2^m-1}\}$
 \begin{align*}
 &d^{(m)}_k(\tilde{p}_{m})\\
 &\: = Rf_k -\int_E \int_U Af_k(x,u) \hat{\eta}_{0,m}(du,x) \tilde{p}_{m}(x) dx - \int_E \int_U Bf_k(x,u) \hat{\eta}_{1,m}(du,x)\mu_{1,E}(dx)\\
 &\: = \int_E \int_U Af_k(x,u) \eta_0(du,x) p(x) dx-\int_E \int_U Af_k(x,u) \hat{\eta}_{0,m}(du,x) \tilde{p}_{m}(x) dx\\
 &\:\quad +\int_E \int_U Bf_k(x,u) \eta_1(du,x)\mu_{1,E}(dx)- \int_E \int_U Bf_k(x,u) \hat{\eta}_{1,m}(du,x)\mu_{1,E}(dx)
 \end{align*}
 holds.
 The triangle inequality reveals that 
 \begin{align*}
 &\vert d^{(m)}_k(\tilde{p}_{m}) \vert\\
 &\quad\leq \left\vert\int_E \int_U Af_k(x,u) \eta_0(du,x) p(x) dx -\int_E \int_U Af_k(x,u) \hat{\eta}_{0,m}(du,x) p(x) dx \right\vert \\
 &\quad+ \left\vert\int_E \int_U Af_k(x,u) \hat{\eta}_{0,m}(du,x) p(x) dx  -
 \int_E \int_U Af_k(x,u) \hat{\eta}_{0,m}(du,x) \tilde{p}_{m}(x) dx\right\vert\\
 &\quad +\left\vert\int_E \int_U Bf_k(x,u) \eta_1(du,x)\mu_{1,E}(dx)- \int_E \int_U Bf_k(x,u) \hat{\eta}_{1,m}(du,x)\mu_{1,E}(dx)\right\vert\\
 &\equiv \left\vert d^{(m)}_{k,1}\right\vert + \left\vert d^{(m)}_{k,2}\right\vert + \left\vert d^{(m)}_{k,3}\right\vert.
 \end{align*}
 Apply Lemma \ref{analysis:inft_bss_bcs:suitable_approximation} with $\epsilon = \delta$ and $D_1 = D_2\cdot 3 \cdot \max\{\bar{A},1\}$. Take 
 $\hat{\epsilon}_1$ and $m_1$ from this result. Set $\hat{\epsilon}_2 = \hat{\epsilon}_1/ D_2$. Then,
 $\hat{\epsilon}_2\leq\delta$ and for all $m\geq m_1$, there is a 
 $\tilde{p}_{m}\in \Span\{p_0,p_1,\ldots,p_{2^{m}-1} \}$ such that 
 $\|p-\tilde{p}_{m} \|_{L^1(E)}< \frac{\hat{\epsilon}_2}{3\cdot\rule{0pt}{9pt} \max\{\bar{A},1\}}$ as well as
 $\tilde{p}_{m}\geq D_2 \cdot  \hat{\epsilon}_2$ holds.
 Also,
 \begin{equation*}
 \left\vert d^{(m)}_{k,2}\right\vert \equiv \left\vert\int_E \int_U Af_k(x,u) \hat{\eta}_{0,m}(du,x) \left(p(x)-\tilde{p}_{m}(x)\right)dx\right\vert
 \leq \bar{A} \|p-\tilde{p}_{m}\|_{L^1(E)} < \frac{\hat{\epsilon}_2}{3}.
 \end{equation*}
 By Proposition \ref{convergence:cauchytrick_proposition2}, we can choose $m_2\geq m_1$ such that for all $m\geq m_2$, $\vert d^m_{k,1} \vert$ is bounded
 by $\frac{\hat{\epsilon}_2}{3}$. By Proposition \ref{convergence:cauchytrick_proposition2_singular}, we can choose $m_3\geq m_2$ such that 
 $\vert d^{(m)}_{k,3} \vert$ is bounded by $\frac{\hat{\epsilon}_2}{3}$ for all $m\geq m_3$, which shows that 
 $\left\vert d_k^{(m)}(\tilde{p}_{m})\right\vert <\hat{\epsilon}_2$ for 
 $k=1,2,\ldots,n$. For $k=n+1$, since $p$ is a probability density,
 \begin{equation*}
 \|\tilde{p}_{m}\|_{L^1(E)} \leq\|\tilde{p}_{m}-p\|_{L^1(E)} + \|p\|_{L^1(E)}<\frac{\hat{\epsilon}_2}{3\max\{\rule{0pt}{9pt}\bar{A},1\}} + 1< \hat{\epsilon}_2+1.
 \end{equation*}
 Now assume that $\|\tilde{p}_{m}\|_{L^1(E)}<1-\hat{\epsilon}_2$. Then,
 \begin{equation*}
 \|p\|_{L^1(E)} \leq\|p-\tilde{p}_{m}\|_{L^1(E)} + \|\tilde{p}_{m}\|_{L^1(E)}<\frac{\hat{\epsilon}_2}{3\max\{\rule{0pt}{9pt}\bar{A},1\}}+1-\hat{\epsilon}_2
 < \hat{\epsilon}_2 + 1-\hat{\epsilon}_2 =1,
 \end{equation*}
 a contradiction, and we have that
 \begin{equation*}
 1-\hat{\epsilon}_2 \leq \|\tilde{p}_{m}\|_{L^1(E)}\leq 1+\hat{\epsilon}_2.
 \end{equation*}
 Hence, $\vert d^{(m)}_{n+1}(\tilde{p}_{m}) \vert < \hat{\epsilon}_2$, which completes the proof, upon setting $m_0=m_3$.
\end{proof}
Now we can show that the statement of Lemma \ref{analysis:inft_bss_bcs:bound_on_correction} is true.
\begin{proof}[Proof of Lemma \ref{analysis:inft_bss_bcs:bound_on_correction}]
 Fix $\vartheta>0$. Select $m_1\in\mathbb{N}$ large enough such that for all $m\geq m_1$, $C^{(m)}$ has full rank and thus 
 $n+1$ independent columns. For any $m\geq m_1$, let 
 $\bar{C}^{(m)}\in\mathbb{R}^{n+1,n+1}$ be a matrix consisting of $n+1$ independent columns of $C^{(m)}$. Set 
 \begin{equation*}
 \delta = \frac{\vartheta}{\rule{0pt}{10pt}\max\left\{1,\left\|\left(\bar{C}^{(m_1)}\right)^{-1}\right\|_\infty\right\}}
 \end{equation*}
 and by Lemma \ref{analysis:inft_bss_bcs:bound_on_const_error},
 with $\delta$ and $D_2=\max\{1,\|\left(\bar{C}^{(m_1)}\right)^{-1}\|_\infty\}$, find $m_2\geq m_1$ such that for
 all $m\geq m_2$, there is a $\tilde{p}_{m}$, with $\|d^{(m)}(\tilde{p}_{m})\|_\infty<\hat{\epsilon}_2\leq\delta$, for some $\hat{\epsilon}_2>0$,
 satisfying 
 \begin{equation*}
\tilde{p}_{m}\geq\max\left\{1,\|\left(\bar{C}^{(m_1)}\right)^{-1}\|_\infty\right\}\cdot \hat{\epsilon}_2\geq\|\left(\bar{C}^{(m_1)}\right)^{-1}\|_\infty\cdot \hat{\epsilon}_2 
 \end{equation*}
 as well as
 $\|p-\tilde{p}_{m}\|_{L^1(E)}<\frac{\hat{\epsilon}_2}{3\cdot\rule{0pt}{9pt} \max\{\bar{A},1\}}$. Set 
 $\hat{\vartheta} = \max\left\{1,\|\left(\bar{C}^{(m_1)}\right)^{-1}\|_\infty\right\}\cdot \hat{\epsilon}_2$ and note that 
 $\frac{\hat{\epsilon}_2}{3\cdot\rule{0pt}{9pt} \max\{\bar{A},1\}}<\hat{\vartheta}$.
 Consider the solution $\tilde{y}\in \mathbb{R}^{2^{m_1}}$ for 
 $C^{(m_1)}y=-d^{(m_2)}(\tilde{p}_{m})$ that is given by injecting
 $\bar{y} = \left(\bar{C}^{(m_1)}\right)^{-1}\left(-d^{(m_2)}(\tilde{p}_{m})\right)\in\mathbb{R}^{n+1}$ into $\mathbb{R}^{2^{m_1}}$. Then, 
 \begin{equation*}
 \|\tilde{y}\|_\infty = \|\bar{y}\|_\infty = \left\|\left(\bar{C}^{(m_1)}\right)^{-1}d^{(m_2)}(\tilde{p}_{m}) \right\|_\infty
 < \left\|\left(\bar{C}^{(m_1)}\right)^{-1}\right\|_\infty\|d^{(m_2)}(\tilde{p}_{m})\|_\infty\leq\hat{\vartheta}.
 \end{equation*}
 We now show that there is a solution $\tilde{y}$ to $C^{(m_2)}y= d^{(m_2)}(\tilde{p}_{m})$ that satisfies $\|\tilde{y}\|_\infty\leq\hat{\vartheta}$.
 By the definition of the constraint matrix, for any $m\in \mathbb{N}$, we have that for $k=1,2,\ldots,n+1$ and $i=0,1,\ldots, 2^{m}-1$,
 \begin{equation*}
 C^{(m+1)}_{k,2i} + C^{(m+1)}_{k,2i+1} = C^{(m)}_{k,i}
 \end{equation*}
 holds. Indeed, since for $1\leq k \leq n$, by the choice of basis functions $\{p_0,p_1,\ldots,p_{2^{m}-1}\}$ as indicator functions
 over dyadic intervals, the entries of $C^{(m)}_{k,i}$ are given by
 integration of the functions $Af_k$ over intervals that are cut in half, and if $k=n+1$, the entries are simply given by the
 interval lengths $(x_{j+1} -x_j)$ since $p_j=1$ on $[x_{j+1},x_j)$. Hence, if $y$ is a solution to $C^{(m)} y=-d$, 
 the vector $\bar{y}\in \mathbb{R}^{2^{m+1}}$ with
 components
 \begin{equation*}
 \bar{y}_{2i+1} = \bar{y}_{2i} = y_i
 \end{equation*}
 where $i=0,1,\ldots, 2^{m}-1$, satisfies $C^{(m+1)}y=-d$, and $\|y\|_\infty = \|\bar{y}\|_\infty$ holds. Inductively, this reveals that
 for any $m\geq m_1$, there is a solution $\tilde{y}$ to $C^{(m)}y = -d^{(m_2)}(\tilde{p}_{m})$ which satisfies
 $\|\tilde{y}\|_\infty = \|y\|_\infty \leq \hat{\vartheta}$. In particular, this means that there is a solution
 $\tilde{y}$ to $C^{(m_2)}y=-d^{(m_2)}(\tilde{p}_{m})$, with $\|\tilde{y}\|_\infty = \|y\|_\infty < \hat{\vartheta}$. For any $m\geq m_2$, this analysis can be conducted
 similarly, showing the result for $m_0=m_2$.
\end{proof}
\bibliographystyle{siam}
\bibliography{dissertation}

\begin{thebibliography}{10}

\bibitem{dufour-continuous-time}
{\sc J.~Anselmi, F.~Dufour, and T.~Prieto-Rumeau}, {\em Computable
  approximations for continuous-time {M}arkov decision processes on {B}orel
  spaces based on empirical measures}, J. Math. Anal. Appl., 443 (2016),
  pp.~1323--1361.

\bibitem{bhatt-borkar}
{\sc A.~G. Bhatt and V.~S. Borkar}, {\em Occupation measures for controlled
  {M}arkov processes: characterization and optimality}, Ann. Probab., 24
  (1996), pp.~1531--1562.

\bibitem{billingsleyconvergence}
{\sc P.~Billingsley}, {\em Convergence of probability measures}, Wiley Series
  in Probability and Statistics: Probability and Statistics, John Wiley \&
  Sons, Inc., New York, {S}econd~ed., 1999.
\newblock A Wiley-Interscience Publication.

\bibitem{bogachev-measure-theory}
{\sc V.~I. Bogachev}, {\em Measure theory. {V}ol. {I}, {II}}, Springer-Verlag,
  Berlin, 2007.

\bibitem{deboorsplines}
{\sc C.~de~Boor}, {\em A practical guide to splines}, vol.~27 of Applied
  Mathematical Sciences, Springer-Verlag, New York, revised~ed., 2001.

\bibitem{fleming-opt-control}
{\sc W.~H. Fleming and R.~W. Rishel}, {\em Deterministic and stochastic optimal
  control}, Springer-Verlag, Berlin-New York, 1975.
\newblock Applications of Mathematics, No. 1.

\bibitem{fleming-viscosity}
{\sc W.~H. Fleming and H.~M. Soner}, {\em Controlled {M}arkov processes and
  viscosity solutions}, vol.~25 of Stochastic Modelling and Applied
  Probability, Springer, New York, second~ed., 2006.

\bibitem{cubic-spline-approx}
{\sc C.~A. Hall and W.~W. Meyer}, {\em Optimal error bounds for cubic spline
  interpolation}, J. Approximation Theory, 16 (1976), pp.~105--122.

\bibitem{helmes-roehl-stockbridge-moments}
{\sc K.~Helmes, S.~R{\"o}hl, and R.~H. Stockbridge}, {\em Computing moments of
  the exit time distribution for {M}arkov processes by linear programming},
  Oper. Res., 49 (2001), pp.~516--530.

\bibitem{helmes-stockbridge-oc}
{\sc K.~Helmes and R.~H. Stockbridge}, {\em Determining the optimal control of
  singular stochastic processes using linear programming}, in Markov processes
  and related topics: A {F}estschrift for {T}homas {G}. {K}urtz, vol.~4 of
  Inst. Math. Stat. (IMS) Collect., Inst. Math. Statist., Beachwood, OH, 2008,
  pp.~137--153.

\bibitem{kaczmarek}
{\sc P.~Kaczmarek, S.~T. Kent, G.~A. Rus, R.~H. Stockbridge, and B.~A. Wade},
  {\em Numerical solution of a long-term average control problem for singular
  stochastic processes}, Math. Methods Oper. Res., 66 (2007), pp.~451--473.

\bibitem{kumar-kumar}
{\sc S.~Kumar and K.~Muthuraman}, {\em A numerical method for solving singular
  stochastic control problems}, Operations Research, 52 (2004), pp.~563--582.

\bibitem{kurtz-stockbridge-existence}
{\sc T.~G. Kurtz and R.~H. Stockbridge}, {\em Existence of {M}arkov controls
  and characterization of optimal {M}arkov controls}, SIAM J. Control Optim.,
  36 (1998), pp.~609--653.

\bibitem{kurtz-stockbridge}
{\sc T.~G. Kurtz and R.~H. Stockbridge}, {\em Linear programming formulations
  of singular stochastic processes}, {[arXiv:1707.09209]},  (2017).

\bibitem{kushner-depuis}
{\sc H.~J. Kushner and P.~Dupuis}, {\em Numerical methods for stochastic
  control problems in continuous time}, vol.~24 of Applications of Mathematics
  (New York), Springer-Verlag, New York, {S}econd~ed., 2001.
\newblock Stochastic Modelling and Applied Probability.

\bibitem{lasserre-moments}
{\sc J.-B. Lasserre and T.~Prieto-Rumeau}, {\em S{DP} vs. {LP} relaxations for
  the moment approach in some performance evaluation problems}, Stoch. Models,
  20 (2004), pp.~439--456.

\bibitem{lutz-lookback}
{\sc M.~Lutz}, {\em A linear programming approach for optimal exercise and
  valuation of {A}merican lookback options}, Master's thesis, University of
  Wisconsin-Milwaukee, 2007.

\bibitem{manne-lp}
{\sc A.~S. Manne}, {\em Linear programming and sequential decisions},
  Management Sci., 6 (1960), pp.~259--267.

\bibitem{stockbridge-mendiondo}
{\sc M.~S. Mendiondo and R.~H. Stockbridge}, {\em Approximation of
  infinite-dimensional linear programming problems which arise in stochastic
  control}, SIAM J. Control Optim., 36 (1998), pp.~1448--1472.

\bibitem{rus-thesis}
{\sc G.~A. Rus}, {\em Finite element methods for control of singular stochastic
  processes}, PhD thesis, 2009.
\newblock The University of Wisconsin - Milwaukee.

\bibitem{serrano-levy-ito}
{\sc R.~Serrano}, {\em On the lp formulation in measure spaces of optimal
  control problems for jump-diffusions}, Systems \& Control Letters, 85 (2015),
  pp.~33--36.

\bibitem{taksar-linear-programming}
{\sc M.~I. Taksar}, {\em Infinite-dimensional linear programming approach to
  singular stochastic control}, SIAM Journal on Control and Optimization, 35
  (1997), pp.~604--625.

\bibitem{mgvieten-thesis}
{\sc M.~G. Vieten}, {\em Numerical Solution of Stochastic Control Problems
  Using the Finite Element Method}, PhD thesis, 2018.
\newblock The University of Wisconsin - Milwaukee.

\end{thebibliography}
\end{document}